\documentclass[twoside,11pt]{article}

%

%
%
%

\usepackage{caption}
\usepackage[preprint]{jmlr2e_edited}

\usepackage{amsmath}
\usepackage{amssymb}
\usepackage{thmtools}
\usepackage{enumitem}
\setlist[enumerate]{leftmargin=.5in}
\setlist[itemize]{leftmargin=.5in}

\graphicspath{{figures/Drawings/}}
\usepackage[percent]{overpic}

\usepackage[normalem]{ulem}
\useunder{\uline}{\ul}{}

\usepackage{tikz}
\usetikzlibrary{decorations.pathreplacing,matrix,positioning,calc}
\tikzset{
mybrace/.style={
  decorate,
  decoration={brace,aspect=#1},
  line width=1pt
  }
}

\usepackage{multirow}

\usepackage[capitalize]{cleveref}

\usepackage{algorithm}
\usepackage{algpseudocode}
\algnewcommand{\LineComment}[1]{\State \(\triangleright\) #1}
\algrenewcommand\algorithmicindent{1em}%

\usepackage{appendix}




\makeatletter
\def\cleartheorem#1{%
    \expandafter\let\csname#1\endcsname\relax
    \expandafter\let\csname c@#1\endcsname\relax
}
\makeatother

\cleartheorem{example}
\cleartheorem{theorem}
\cleartheorem{lemma}
\cleartheorem{proposition}
\cleartheorem{remark}
\cleartheorem{corollary}
\cleartheorem{definition}
\cleartheorem{conjecture}
\cleartheorem{axiom}

\newtheorem{theorem}{Theorem}
\newtheorem{example}[theorem]{Example} 
\newtheorem{lemma}[theorem]{Lemma} 
\newtheorem{proposition}[theorem]{Proposition} 
\newtheorem{remark}[theorem]{Remark}
\newtheorem{corollary}[theorem]{Corollary}
\newtheorem{definition}[theorem]{Definition}

\newtheorem{notation}[theorem]{Notation}
\newtheorem{resultx}{Result}




\newcommand{\bbN}{\mathbb{N}}
\newcommand{\bbR}{\mathbb{R}}

\renewcommand{\to}{\rightarrow}

\newcommand{\iso}{\cong}


\renewcommand{\Im}{\mathrm{Im}}

\newcommand{\op}{\mathrm{op}}
\newcommand{\rk}{\mathrm{rk}}


\newcommand{\SET}{X}
\newcommand{\Met}{M}
\newcommand{\Metalt}{N}
\newcommand{\MPS}{\mathcal{M}}
\newcommand{\MPSalt}{\mathcal{N}}

\newcommand{\define}[1]{\textbf{\boldmath{#1}}}
\newcommand{\calF}{\mathcal{F}}
\newcommand{\C}{\mathsf{C}}
\newcommand{\PD}{\mathcal{B}}

\newcommand{\PC}{\mathsf{PC}}

\newcommand{\dGHP}{d_{\mathrm{GHP}}}
\newcommand{\dGH}{d_{\mathrm{GH}}}
\newcommand{\dH}{d_{\mathrm{H}}}
\newcommand{\dP}{d_{\mathrm{P}}}
\newcommand{\dI}{d_{\mathrm{I}}}
\newcommand{\extension}[1]{\bar{#1}}
\newcommand{\mergefun}[1]{\theta_{#1}}
\newcommand{\distortion}{\mathrm{dis}}

\newcommand{\dWI}{d_{\mathrm{CI}}}
\newcommand{\dB}{d_{\mathrm{B}}}

\newcommand{\DR}{\mathrm{DR}}

\newcommand{\SL}{\mathrm{SL}}
\newcommand{\Rips}{\mathrm{R}}
\newcommand{\RL}{\mathrm{RSL}}
\newcommand{\RG}{\mathrm{PI}}
\newcommand{\cov}{_{\mathrm{cov}}}
\newcommand{\con}{_{\mathrm{con}}}
\newcommand{\Uniform}{\mathrm{Uni}}
\newcommand{\mml}{\mathrm{L}}

\newcommand{\height}{\eta}

\newcommand{\upar}[1]{_{\left[#1\right]}}

\newcommand{\bbRs}{\mathbb{R}_{>0}} 

\newcommand{\conv}[3]{\left(#1\ast #2_{#3}\right)}

\newcommand{\bfC}{\mathbf{C}}
\newcommand{\bfD}{\mathbf{D}}
\newcommand{\bfE}{\mathbf{E}}
\newcommand{\bfA}{\mathbf{A}}

\newcommand{\parts}{\mathcal{P}}
\newcommand{\linkage}[1]{#1\textup{-\texttt{link}}}
\newcommand{\theoremlinkage}[1]{#1\textup{-\texttt{link}}}

\newcommand{\birth}{\mathsf{birth}}

\newcommand{\death}{\mathsf{death}}
\newcommand{\dd}{\,\mathsf{d}}
\newcommand{\life}{\mathsf{life}}

\newcommand{\leaves}{\mathsf{leaves}}
\newcommand{\PF}{\mathsf{PF}}
\newcommand{\gapindex}{n}

\newcommand{\support}{\mathcal{S}}

\newcommand{\length}{\mathsf{length}}

\newcommand{\inter}{\mathsf{TOI}}
\newcommand{\fieldk}{\mathbb{F}}
\newcommand{\kvec}{\fieldk\text{-}\mathbf{vec}}

\renewcommand{\epsilon}{\varepsilon}

\newcommand{\barc}{\mathcal{B}}
\newcommand{\barA}{B}

\newcommand{\prom}{\mathsf{Pr}}
\newcommand{\gap}{\mathsf{gap}}
\newcommand{\gapsize}{\mathsf{gapsize}}

\newcommand{\twC}{0.49} 


\usepackage{lastpage}
\jmlrheading{25}{2024}{1-\pageref{LastPage}}{10/21; Revised 7/24}{8/24}{21-1185}{Alexander Rolle and Luis Scoccola}
\ShortHeadings{Stable and Consistent Density-Based Clustering}{Rolle and Scoccola}



\firstpageno{1}

\begin{document}

\title{Stable and Consistent Density-Based Clustering via Multiparameter Persistence}

\author{\name Alexander Rolle \email alexander.rolle@tum.de \\
       \addr Department of Mathematics\\
       Technical University of Munich\\
       Boltzmannstra{\ss}e 3, 85748 Garching, Germany
       \AND
       \name Luis Scoccola \email luis.scoccola@maths.ox.ac.uk \\
       \addr Mathematical Institute\\
       University of Oxford\\
       Woodstock Road,
       Oxford
       OX2 6GG,
       United Kingdom}

\editor{Sivan Sabato}

\maketitle

\begin{abstract}
We consider the degree-Rips construction from topological data analysis, 
which provides a density-sensitive, multiparameter hierarchical clustering algorithm. 
We analyze its stability to perturbations of the input data using the correspondence-interleaving distance, 
a metric for hierarchical clusterings that we introduce. 
Taking certain one-parameter slices of degree-Rips recovers well-known methods for density-based clustering, 
but we show that these methods are unstable. 
However, we prove that degree-Rips, as a multiparameter object, is stable, 
and we propose an alternative approach for taking slices of degree-Rips, 
which yields a one-parameter hierarchical clustering algorithm with better stability properties. 
We prove that this algorithm is consistent, 
using the correspondence-interleaving distance. 
We provide an algorithm for extracting a single clustering from one-parameter hierarchical clusterings, 
which is stable with respect to the correspondence-interleaving distance. 
And, we integrate these methods into a pipeline for density-based clustering, which we call Persistable. 
Adapting tools from multiparameter persistent homology, we propose visualization tools that guide the selection of all parameters of the pipeline. 
We demonstrate Persistable on benchmark data sets, 
showing that it identifies multi-scale cluster structure in data.
\end{abstract}

\begin{keywords}
  density-based clustering, topological data analysis, hierarchical clustering, multiparameter persistent homology, interleaving distance, vineyard
\end{keywords}


\setcounter{tocdepth}{2}

\tableofcontents


\addtocontents{toc}{\protect\setcounter{tocdepth}{1}}
\setcounter{table}{1}

\section{Introduction}

Let $f : \bbR^d \to \bbR$ be a probability density function, 
and let $\support(f)$ be its support. 
There is a one-parameter hierarchical clustering $H(f)$ of $\support(f)$ where, 
for $r > 0$, $H(f)(r)$ is the set of connected components of 
$\{ x \in \support(f) : f(x) \geq r \}$. 
This is hierarchical in the sense that, 
if $r < r'$, then $H(f)(r)$ is a refinement of $H(f)(r')$. 
Following \cite{hartigan-75}, 
we call $H(f)$ the \emph{density-contour hierarchical clustering}. 
The central theoretical problem of density-based clustering is to approximate $H(f)$, 
given finite samples drawn from $f$.

A large amount of work has been done on the related problem 
of estimating the density $f$ itself, given a finite sample. 
If one constructs an estimate $\hat{f}$ from a sample $X$, 
the ``plug-in'' approach would be to estimate $H(f)(r)$ by $H(\hat{f})(r)$, 
however this is not computationally-tractable 
(see \cite{chaudhuri-dasgupta-10}). 
Instead, \cite{Cuevas-Febrero-Fraiman} propose to 
construct a graph on $X$ that encodes distance relations, 
and then estimate $H(f)(r)$ by taking the connected components 
of the induced subgraph on the vertices $\{x \in X : \hat{f}(x) \geq r \}$. 
The graph is the \emph{Rips graph} for a fixed distance scale: 
for $x,y \in X$, there is an edge between $x$ and $y$ if $||x-y|| \leq s$, for some fixed $s > 0$. 
We call this approach the \emph{plug-in} algorithm. 
See Related Work, below, for further references for this idea.

Another popular approach to density-based clustering 
is the \emph{robust single-linkage} algorithm of \cite{chaudhuri-dasgupta-10}. 
This is a density-sensitive modification of the single-linkage algorithm. 
Chaudhuri--Dasgupta prove that this method is \textit{Hartigan consistent}: 
as the size of the sample tends to infinity, 
the robust single-linkage of a sample of $f$ converges in probability to $H(f)$, 
using a criterion of Hartigan to 
compare the density-contour hierarchical clustering 
with a hierarchical clustering produced from a sample. 

\cite{mcinnes-healy} observed that the robust single-linkage algorithm 
is closely connected to the degree-Rips bifiltration 
\citep{lesnick-wright, blumberg-lesnick-2param-published} 
from topological data analysis (TDA). 
Degree-Rips should be of great interest to researchers in the field of clustering, 
as it simultaneously generalizes several important methods for density-based clustering. 
In its original formulation, degree-Rips is a two-parameter filtration of simplicial complexes, 
but in the setting of clustering, only the underlying graphs are relevant. 
In detail, let $\Met$ be a finite metric space, 
let $s > 0$, and let $k \in (0,1)$. 
Define a graph $G_{s,k}$ with vertex set 
$\{x \in M : |B(x,s)| \geq k \cdot |\Met| \}$, 
and with an edge between $x$ and $y$ if $d_{\Met}(x,y) \leq s$. 
Here, $B(x,s)$ is the open ball in $\Met$ of radius $s$ centered at $x$. 
These graphs form a two-parameter filtration, 
in the sense that there is an inclusion $G_{s,k} \subseteq G_{s',k'}$ 
for any $s' \geq s$ and any $k' \leq k$. 
We say that the \emph{degree-Rips hierarchical clustering} of $\Met$ 
is the two-parameter hierarchical clustering $\DR(M)$ 
with $\DR(M)_{s,k}$ given by the connected components of the graph $G_{s,k}$. 
See \cref{DR-intro-figure}.

\begin{figure}
    \begin{center}
    \includegraphics[width=\textwidth]{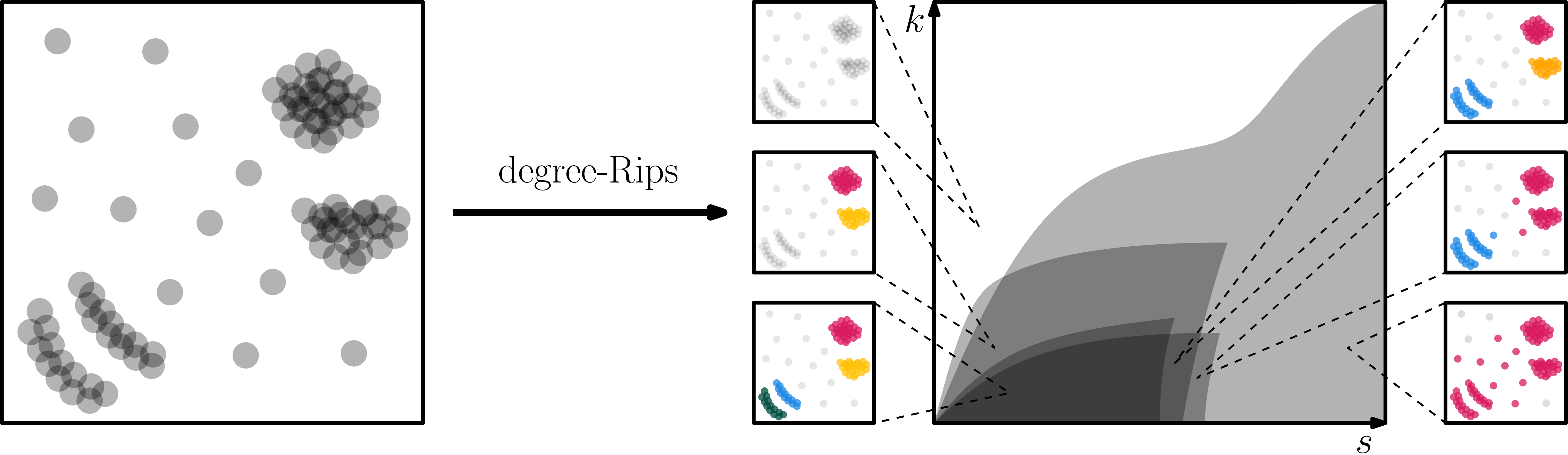}
    \end{center}
    \caption{Degree-Rips is a two-parameter hierarchical clustering. 
    The spatial parameter~$s$ controls the distance at which points are joined together: 
    at larger values of $s$, more points are joined together. 
    The density parameter $k$ controls when data points enter the hierarchical clustering: 
    at larger values of $k$, points must be in denser regions to enter. 
    The robust single-linkage algorithm fixes $k$ and lets $s$ vary 
    (taking a horizontal slice). 
    The plug-in algorithm fixes $s$ and lets $k$ vary 
    (taking a vertical slice).}
    \label{DR-intro-figure}
\end{figure}

Both the robust single-linkage algorithm and the plug-in algorithm 
can be seen as one-parameter \emph{slices} of the degree-Rips hierarchical clustering: 
if we fix $k$ and let $s$ vary, we recover the robust single-linkage of $\Met$; 
if we fix $s$ and let $k$ vary, we recover the plug-in algorithm, 
where the density estimate $\hat{f}$ is a kernel density estimate 
computed with the uniform kernel and bandwidth $s$, 
and the Rips graph is constructed also with parameter $s$. 


Furthermore, the degree-Rips hierarchical clustering recovers the popular DBSCAN clustering algorithm 
\citep{dbscan}. 
The clustering $\DR(\Met)(s,k)$ is exactly the DBSCAN* clustering of $\Met$ 
with respect to the spatial parameter $s$ and 
the number-of-neighbors parameter $\lceil k \cdot |\Met| \rceil$. 
DBSCAN* is a minor modification of the original DBSCAN algorithm, defined by \cite{campello-moulavi-sander}.

For this paper, an important observation is that both robust single-linkage and the plug-in algorithm 
are \emph{unstable}: small perturbations of the input can lead to large changes in the output. 
We make this statement precise later in the introduction. 
We therefore consider an alternative, which is very natural from the perspective of TDA. 
Rather than use slices of degree-Rips in which one parameter is fixed, 
we use slices in which both parameters vary.

We now summarize the main contributions of the paper. 
We elaborate on each point in the remainder of the introduction.

\begin{itemize}
	\item We introduce the \emph{correspondence-interleaving distance}, 
	a metric for hierarchical clusterings.
	\item We introduce \emph{kernel linkage}, 
	a density-sensitive, multiparameter hierarchical clustering method 
	that generalizes the degree-Rips hierarchical clustering described above.
	\item We prove that kernel linkage is stable with respect to the correspondence-interleaving distance 
	and the Gromov--Hausdorff--Prokhorov distance on compact metric probability spaces. 
	This implies that degree-Rips is stable, 
	and that appropriate slices of kernel linkage and degree-Rips are also stable. 
	\item We define a notion of consistency for density-based clustering 
	using the cor\-re\-spon\-dence-interleaving distance, which implies Hartigan consistency. 
	We prove that taking appropriate slices of kernel linkage is consistent in this sense.
	\item We define the \emph{persistence-based flattening algorithm}, 
	which extracts a single clustering of the underlying data from a one-parameter hierarchical clustering, 
	and prove that it is stable with respect to the correspondence-interleaving distance.
	\item Persistable is a pipeline for density-based clustering 
	that integrates the algorithms defined in this paper. 
	The Gromov--Hausdorff--Prokhorov stability theorem for kernel linkage implies 
	theoretical guarantees for the entire pipeline, 
	and it justifies a simple approximation scheme that makes it possible to apply the pipeline to large data sets. 
	We describe how the design choices of Persistable are motivated by the results of this paper, 
	we demonstrate Persistable on benchmark data sets, 
	and we show that it identifies meaningful cluster structure in data. 
	In another publication \citep{scoccola-rolle-joss}, we described the implementation of Persistable.
\end{itemize}


\subsection{The Correspondence-Interleaving Distance} 
In order to consider stability questions for hierarchical clustering methods, 
a natural approach is to use a notion of distance between hierarchical clusterings. 
For example, this is the approach taken by \cite{carlsson-memoli-hierarchical-clustering}, 
who prove a stability result for the single-linkage algorithm 
using the Gromov--Hausdorff distance from metric geometry. 
This is possible because the single-linkage of a metric space $X$ 
defines an ultrametric $\theta_X$ on $X$, 
and so one can compare the outputs of single-linkage on $X$ and $Y$ 
by comparing $(X, \theta_X)$ and $(Y, \theta_Y)$ using Gromov--Hausdorff.

However, a hierarchical clustering of $X$ does not define an ultrametric on $X$ unless it is quite special 
(in which case we call it an \emph{ultrametric hierarchical clustering}, \cref{mergefun}). 
In this paper, we formalize the notion of multiparameter hierarchical clustering 
in a way that is analogous to the multiparameter persistence modules from TDA 
\citep{carlsson-zomorodian-multidimensional}. 
We adapt the notion of interleaving from TDA \citep{chazal-etal-interleavings} to this setting, 
and use it to define the correspondence-interleaving distance ($\dWI$) 
between multiparameter hierarchical clusterings (\cref{CI-definition}), 
which generalizes the Gromov--Hausdorff distance on ultrametric hierarchical clusterings 
(\cref{CI-and-GH}).

\subsection{Stability} 
%
There are some choices baked in to the definition of degree-Rips 
that may not be optimal for some applications. 
So, we define a generalization: kernel linkage. 
Degree-Rips estimates the density of the data at a point $x$ 
by counting the number of data points in a ball centered at $x$. 
From the perspective of density estimation, this can be seen as integrating the uniform kernel 
against the uniform measure defined by the input. 
One could just as well use other kernels for estimating density, and kernel linkage allows for this. 
It is also convenient to let kernel linkage take any compact metric probability space as input; 
if the input is a finite metric space as before, one gives it the uniform probability measure.

Our stability theorem for kernel linkage (\cref{continuity-of-mml}) says that kernel linkage 
is uniformly continuous with respect to the Gromov--Hausdorff--Prokhorov distance 
on compact metric probability spaces, and the correspondence-interleaving distance on hierarchical clusterings. 
We note that one can replace the Prokhorov distance with the Wasserstein distance and get the same stability theorem for kernel linkage (\cref{GHW-stability}). 
In the special case of degree-Rips, our stability theorem is as follows:

\begin{resultx}[{\cref{DR-stability}}] \label{intro-DR-stability}
If $\Met$ and $\Metalt$ are finite metric spaces, then 
\[
	\dWI(\DR(\Met), \DR(\Metalt)) \leq 2 \cdot \dGHP(\Met, \Metalt) \, .
\]
\end{resultx}

Requiring two finite metric spaces to be close in the Gromov--Hausdorff--Prokhorov distance 
amounts to requiring that they be close in the Gromov--Hausdorff distance 
(so that their metric geometry is similar), 
and that they be close in the Gromov--Prokhorov distance (so that their uniform measures are similar). 
We use this distance for our stability theorem because 
degree-Rips fails to be continuous with respect to the Gromov--Hausdorff distance or the Gromov--Prokhorov distance (see \cref{choice-of-GHP}). 
In order to get a continuity result, one must combine these two kinds of restrictions on the input.

We regard the use of Gromov--Hausdorff--Prokhorov as a strong assumption. 
But, it leads to correspondingly strong conclusions 
(uniform continuity in the case of kernel linkage, and Lipschitz-continuity in the special case of degree-Rips). 
It is useful to know the conditions that lead to these conclusions. 
For example, a key consequence of our stability theorem is a simple subsampling approximation algorithm for degree-Rips 
(see \cref{subsampling-approximation}).  

\begin{figure}
    \begin{center}
    \includegraphics[width=\textwidth]{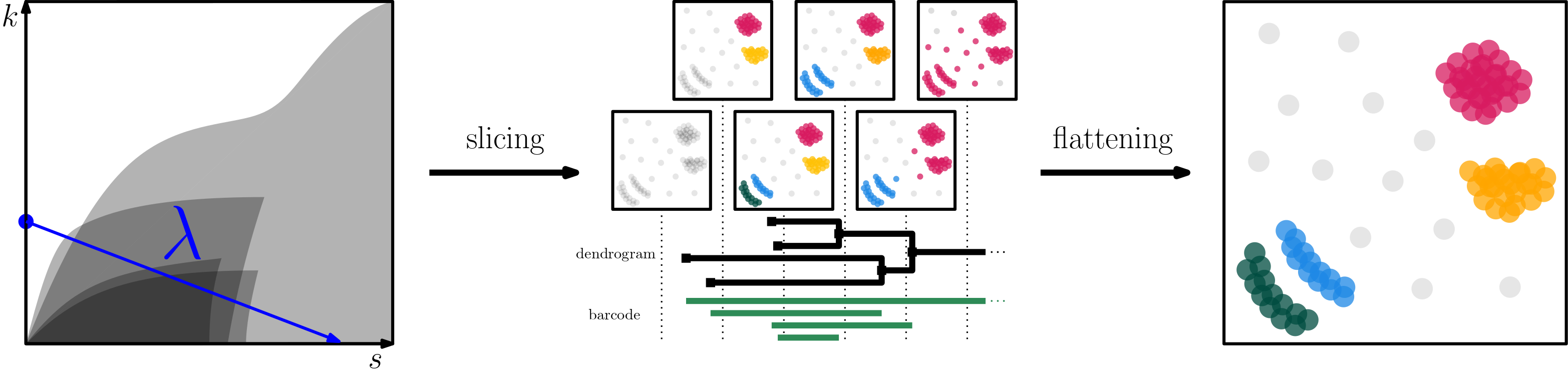}
    \end{center}
    \caption{Restricting degree-Rips (\cref{DR-intro-figure}) to a line $\lambda$ with negative slope 
    in the degree-Rips parameter space gives a one-parameter hierarchical clustering 
    we call $\lambda$-linkage. 
    The line $\lambda$ lets both of the degree-Rips parameters $s$ and $k$ vary, 
    in contrast to horizontal or vertical lines 
    (see \cref{DR-intro-figure}). 
    This allows $\lambda$-linkage to capture multi-scale cluster structure in data, 
    and it leads to better stability properties. 
    The barcode is a visualizable summary of a one-parameter hierarchical clustering. 
    The persistence-based flattening algorithm extracts a single clustering of the underlying data, 
    guided by the barcode.}
    \label{slicing-intro-figure}
\end{figure}

The Gromov--Hausdorff--Prokhorov stability of degree-Rips is in contrast to the robust single-linkage algorithm and the plug-in algorithm from above, 
which are discontinuous with respect to the Gromov--Hausdorff--Prokhorov distance, 
as we show in \cref{instability-related-methods}. 
In TDA, a standard method for extracting information from a two-parameter persistence module 
is to take one-parameter slices (see Related Work, below, for references). 
However, one usually takes slices by lines through the parameter space that do not fix either of the parameters. 
Slices in which both parameters vary have two key advantages. 
First, they are multi-scale: they capture information across a range of values of both parameters. 
Second, these slices have better stability properties, since interleavings between 
multiparameter persistence modules restrict to interleavings between these slices. 

The situation is completely analogous in the setting of hierarchical clustering. 
So, rather than use robust single-linkage or the plug-in algorithm for density-based clustering 
(which correspond to using horizontal or vertical slices of degree-Rips), 
we propose using slices of degree-Rips in which both parameters vary. 
In more detail, given a line $\lambda$ in the plane with negative slope, 
restricting $\DR(\Met)$ to $\lambda$ gives a one-parameter hierarchical clustering, 
which we call $\lambda$-linkage, denoted $\linkage{\lambda}(M)$ 
(see \cref{slicing-intro-figure}). 
In contrast to robust single-linkage and the plug-in approach, 
$\lambda$-linkage is multi-scale, and it is stable with respect to the Gromov--Hausdorff--Prokhorov distance: 
as an immediate corollary of \cref{intro-DR-stability}, 
we obtain the following stability result.

\begin{resultx}[{\cref{stability-lambda-linkage-in-X}}]
Let $\lambda$ be a line in the plane with slope $\sigma < 0$. 
If $\Met$ and $\Metalt$ are finite metric spaces, then 
\[
	\dWI(\linkage{\lambda}(\Met), \linkage{\lambda}(\Metalt)) \leq 
	\max(2 \vert \sigma \vert, 1) \cdot \dGHP(\Met, \Metalt) \, .
\]
\end{resultx}

\subsection{Consistency} 
Roughly speaking, a ``consistency result'' for density-based clustering usually says that, 
given a density function $f$ and an algorithm for computing hierarchical clusterings of finite samples drawn from $f$, 
the output of the algorithm converges in probability to the density-contour hierarchical clustering $H(f)$, 
as the sample size goes to infinity. 
To make this precise, one needs to specify what it means to converge in this context. 
There is a natural notion of consistency associated to the correspondence-interleaving distance, 
which we call CI-consistency; 
the idea is that the output of the algorithm should converge to $H(f)$ in the correspondence-interleaving distance, 
though in fact we require slightly more than this. 
CI-consistency is stronger than Hartigan consistency, 
so proving that an algorithm is CI-consistent implies that it is also Hartigan consistent. 
While this notion of consistency is novel, we remark that 
CI-consistency is similar in spirit to the notion of consistency of \cite{eldridge-belkin-wang}. 
We prove the following consistency result for $\linkage{\lambda}$. 
In the statement, the notation $\overline{\lambda}$ indicates that the slice $\linkage{\lambda}$ 
has been re-parameterized, using an explicit re-parameterization that only depends on $\lambda$; 
this can be dropped when considering Hartigan consistency, 
since Hartigan consistency is agnostic to the choice of parameterization.

\begin{resultx}[{\cref{consistency-lambda-linkage}}]
The hierarchical clustering algorithm $\theoremlinkage{\overline{\lambda}}$ 
is CI-consistent with respect to any continuous, compactly supported probability density function. 
In particular, $\theoremlinkage{\lambda}$ 
is Hartigan consistent with respect to any such density function. 
\end{resultx}

\subsection{Flattening a Hierarchical Clustering} 
For many applications, one needs a clustering of the input data, not a hierarchical clustering. 
We say that a \emph{flattening} algorithm takes a hierarchical clustering, and returns a single clustering. 
An example of such a flattening algorithm is the ToMATo clustering algorithm 
\citep{chazal-guibas-oudot-skraba}, 
which computes a flattening of the hierarchical clustering induced by a filtered graph. 
A major advantage of ToMATo is that its output can be understood in terms of the \emph{barcode} of the input hierarchical clustering. 
Barcodes are key tools in TDA 
\citep{edelsbrunner-letscher-zomorodian, carlsson-barcodes-shapes, ghrist-barcodes}; 
in this case, the barcode is a visualizable summary of 
the structure of a one-parameter hierarchical clustering (see \cref{slicing-intro-figure}). 
On a technical level however, a disadvantage of ToMATo is that its output depends on a choice of ordering of the vertices in the input graph, 
and in some use cases there may not be a clear way to make this choice. 

We define the \emph{persistence-based flattening algorithm} (\cref{persistence-based-flattening-def}), 
an adaptation of the ToMATo algorithm that avoids the dependence on an ordering of the input. 
And, we prove that it is stable with respect to the correspondence-interleaving distance 
(\cref{stability-tomato-flattening}).

\begin{figure}
    \begin{center}
    \includegraphics[width=\textwidth]{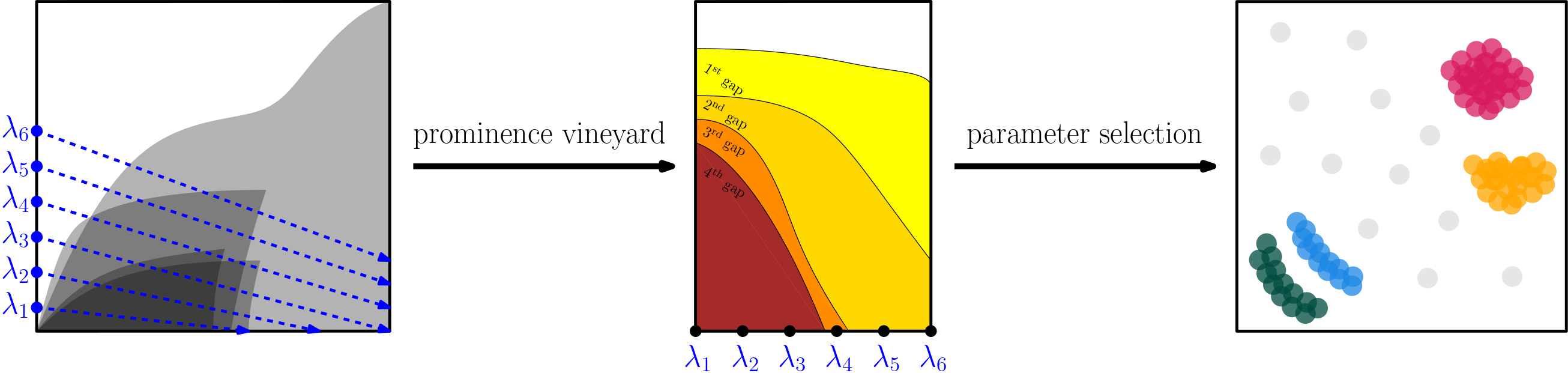}
    \end{center}
    \caption{Parameter selection in the Persistable pipeline. 
    The practitioner can choose a slice $\lambda$ using the prominence vineyard. 
    For each $\lambda$ in a chosen family, the prominence vineyard plots the length 
    of each bar in the barcode of $\lambda$-linkage. 
    As $\lambda$ varies continuously, the barcode varies continuously as well. 
    The lengths of the bars trace out continuous curves: 
    the top curve shows the length of the longest bar in the barcode of each slice, 
    the second curve shows the length of the second longest, etc. 
	Choosing a gap in the prominence vineyard leads to a clustering of the data. 
	Larger gaps lead to more stable results.}
    \label{vineyard-intro-figure}
\end{figure}

\subsection{Persistable} 
Combining the hierarchical clustering algorithm $\linkage{\lambda}$ and the persistence-based flattening algorithm, 
we obtain a pipeline for density-based clustering with good stability properties. 
We call this pipeline \emph{Persistable}.

Our stability theorems for degree-Rips and the persistence-based flattening algorithm 
imply theoretical guarantees for the entire pipeline 
(\cref{stability-lambda-and-PF} and \cref{stability-lambda-and-PF-in-lambda}). 
The stability of degree-Rips also justifies a simple approximation scheme that makes it possible to apply Persistable to large data sets 
(e.g., the rideshare data in \cref{Persistable-on-benchmark-data}). 
This approximation scheme is not valid for related methods that are not Gromov--Hausdorff--Prokhorov stable, such as HDBSCAN \citep{campello-moulavi-sander} and DBSCAN \citep{dbscan}. 

Persistable includes interactive visualization tools that practitioners can use to choose all parameters in the pipeline. 
The key task for the practitioner is to choose the slice~$\lambda$. 
Using a vineyard \citep{cohen-steiner-edelsbrunner-morozov}, 
one can see how the barcode of $\linkage{\lambda}$ changes with the choice of $\lambda$ 
(see \cref{vineyard-intro-figure}). 
Moreover, one can see from the vineyard which choices of $\lambda$ 
lead to particularly stable clusterings of the input data. 
We demonstrate Persistable, and this approach to parameter selection, on benchmark data sets, 
and we show that it provides results that capture meaningful cluster structure. 
These examples also demonstrate that Persistable can identify multi-scale cluster structure 
that is challenging for related algorithms, such as HDBSCAN.

\begin{table}
\centering
{\footnotesize
\begin{tabular}{ |l ||l |  }
\hline
HC & Abbreviation for ``hierarchical clustering'' \\

Poset of clusterings $\C(X), \preceq$ & \cref{poset-of-clusterings-def} \\

The opposite $P^{\op}$ of a poset $P$ & \cref{poset-def} \\

$P$-HC of $X$, $H : P \to \C(X)$ (with $P$ a poset) & \cref{P-HC-def} \\

$n$-parameter HC & \cref{n-param-HC} \\


Density-contour HC $H(f)$ of a density $f$ & \cref{density-contour} \\

Single-linkage $\SL(\Met)$ of a metric space $\Met$ & \cref{single-linkage-def} \\

Extension $\extension{H}$ of a HC $H$ & \cref{extension} \\

Two HCs are $\vec{\epsilon}$-interleaved & \cref{epsilon-interleaving} \\

The interleaving distance $\dI$ & \cref{interleaving-distance} \\

$\mergefun{H}$, ultrametric HC & \cref{mergefun} \\

Correspondence $R \subseteq X \times Y$, $\pi_X : R \to X$, $\pi_Y : R \to Y$ & \cref{correspondence-def} \\

Two HCs are $\vec{\epsilon}$-interleaved w.r.t. $R$ & \cref{epsilon-interleaved-wrt-R} \\

The correspondence-interleaving distance $\dWI$ & \cref{CI-definition} \\

The Hausdorff distance $\dH$ & \cref{Hausdorff-distance} \\

The Gromov--Hausdorff distance $\dGH$ & \cref{GH-distance} \\

metric probability space & \cref{mps-definition} \\

The uniform measure $\mu_{\Met}$ of $\Met$ & Right below \cref{mps-definition} \\

The uniform filtration $\Uniform$ & \cref{uniform-filtration-def} \\

The degree-Rips HC $\DR$ & \cref{def-DR} \\

A kernel $K$ & \cref{definition-of-kernel} \\

The uniform kernel & \cref{uniform-kernel} \\

The local density estimate $\conv{\mu_\MPS}{K}{s}$ & \cref{local-density-estimate} \\

The kernel filtration $\MPS\upar{s,k}$ of $\MPS$ & \cref{kernel-density-filtration-def} \\

The kernel linkage $\mml^K$ or $\mml$ & \cref{def-robust-linkage} \\

A curve $\gamma$ in a poset, a slice $H^{\gamma}$ & \cref{curve} \\

Robust single-linkage $\RL$ & \cref{robust-single-linkage} \\

The plug-in algorithm $\RG$ & \cref{vertical-slices} \\

$\lambda$, $\lambda^{x,y}$, $\lambda\con$, $\lambda\cov$, $\linkage{\lambda}$, $\lambda$-linkage & \cref{lambda-notation} \\

The Prokhorov distance $\dP$ & \cref{Prokhorov-distance} \\

The Gromov--Hausdorff--Prokhorov distance $\dGHP$ & \cref{ghp-distance} \\

The Gromov--Hausdorff--Wasserstein distance & Right above \cref{GHW-stability} \\

The closest point correspondence $R_c$ & \cref{closest-points-correspondence} \\

CI-consistency & \cref{CI-and-MI-consistent} \\

The associated cluster tree $\calF H$ & \cref{associated-cluster-tree} \\

Hartigan consistency & \cref{definition-hartigan-consistency} \\

$\overline{\lambda}$ & \cref{phi-and-rescaling-lambda} \\

Persistent cluster $\bfC$, underlying set $U(\bfC)$, $\life$, $\birth$, $\length$ & \cref{persistentclusterdef} \\

Poset of persistent clusters $\PC$ & \cref{PC-def} \\

The set of $\leaves$ & \cref{leaves-def} \\

Persistence-based pruning $H_{\geq \tau}$ & \cref{persistence-pruning-def} \\

finite, pointwise finite, essentially finite HC & \cref{finiteness} \\ 

The barcode $\barc(H)$ & \cref{barcode-of-HC-def} \\

The bottleneck distance $\dB$ & Right above \cref{bottleneck-correspondence-interleaving} \\

Prominence diagram & \cref{prominence-diagram-def} \\

Prominence diagram $\prom(H)$ of a HC $H$ & \cref{definition:prom-finite} and \cref{prom-H-def} \\

Gap $\gap_\gapindex$, gap size $\gapsize_\gapindex$ & \cref{gap-gapsize-def} \\

Gap $\gap_\gapindex(H)$, gap size $\gapsize_\gapindex(H)$ of a HC $H$ & \cref{gap-of-H-notation} \\

Persistence-based flattening $\PF$ & \cref{persistence-based-flattening-def} \\

$R_X : H(\vec{r}) \to E(\vec{r} + \vec{v} \vec{\epsilon})$ & \cref{notation-R_X} \\

$B(x,r)$, the open ball of radius $r$ centered at $x$ & \\



 

\hline
\end{tabular}
}
\caption*{Definitions and frequently used notation.}
\end{table}

\subsection{Related Work} 
Distances between (one- and two-parameter) hierarchical clusterings
have been studied by \citet{carlsson-memoli-hierarchical-clustering,carlsson-memoli-multid}. 
The cor\-re\-spon\-dence-interleaving distance is a generalization of this work; 
see \cref{distances-subsection} for a discussion.
The formigram distance, introduced by \cite{kim-memoli}, 
can also be seen as a particular instance of the correspondence-interleaving distance. 
\cite{eldridge-belkin-wang} introduce the merge distortion metric for one-parameter hierarchical clusterings, 
which is closely related to the correspondence-interleaving distance.

Work of \cite{stability-density-based-clustering} and \cite{chazal-guibas-oudot-skraba}
addresses the stability of consistent hierarchical clustering methods. 
In their frameworks, stability is guaranteed
when their assumptions on the underlying distribution are satisfied. 
In contrast, our stability results hold without distributional assumptions.

Combining density estimates and graphs that encode distance relations 
to estimate the density-contour hierarchical
clustering has a long history, and several methods based on this idea have been proposed. 
Along with the work of \cite{Cuevas-Febrero-Fraiman} already mentioned, 
see, e.g., \cite{biau-cadre-pelletier, rinaldo-wasserman, stuetzle-nugent, 
chazal-guibas-oudot-skraba, bobrowski-mukherjee-taylor}. 
For another perspective, see 
\cite{Aragam-etal}; 
the authors also combine density estimation with the single-linkage algorithm, 
but approach the clustering problem using the idea of Bayes optimal partitions from parametric model-based clustering.

The consistency of robust single-linkage was first established 
by \cite{chaudhuri-dasgupta-10}, 
and then generalized to density functions supported on manifolds by \cite{cluster-trees-on-manifolds}. 
\cite{eldridge-belkin-wang} introduced a notion of consistency that is closely related to CI-consistency, 
and, building on results of Chaudhuri--Dasgupta, they
show that robust single-linkage is consistent in this sense.

Multiparameter hierarchical clustering is a topic of increasing interest, 
as multiparameter hierarchical clusterings have the potential to capture very rich cluster structure in data. 
See, e.g., \cite{carlsson-memoli-multid, 
buchin-etal-trajectory, 
kim-memoli, 
jardine-stable-components, 
bauer-etal-hierarchical-clustering, 
cai-kim-memoli-wang}. 
We expect that the correspondence-interleaving distance will be useful for analyzing the properties 
of multiparameter hierarchical clustering methods in settings beyond this paper.

As mentioned earlier, our approach to taking slices of the degree-Rips hierarchical clustering 
is motivated by the standard practice in TDA 
of studying multiparameter persistence modules via one-parameter slices. 
See, for example, 
\cite{cerri-difabio-ferri-frosini-landi, 
cagliari-difabio-ferri, 
lesnick-wright, 
landi-interleavings, 
cfklw-kernel, 
vipond-landscape, 
carriere-blumberg}.


When the input data is a finite subset of Euclidean space, and $\gamma$ is a line with constant $s$-component, the slice of kernel linkage by $\gamma$ recovers the connected components of the weighted \v{C}ech filtration introduced by \cite{anai-chazal-glisse-ike-inakoshi-tinarrage}, when their parameter $p$ is set to~$\infty$.
In particular, their stability result applies to this slice of kernel linkage.

As we have already mentioned, the persistence-based flattening we introduce 
is a modification of the ToMATo clustering algorithm \citep{chazal-guibas-oudot-skraba}. 
The persistence-based flattening is defined using a pruning procedure 
we call the persistence-based pruning, which resembles the pruning of 
\cite{statistical-inference-cluster-trees}.

\cite{blumberg-lesnick-2param-published} prove a stability result for the simplicial degree-Rips bifiltration, 
which we discuss in \cref{choice-of-GHP}. 
\cite{jardine-persistent-homotopy} has also proved results about the stability of degree-Rips, 
using a hypothesis involving configuration spaces, rather than a distance on metric probability spaces. 
\citet[Section~6.5]{scoccola} shows that results in this paper can be lifted to the stability of the kernel filtration 
(\cref{kernel-density-filtration-def}), 
which in particular implies that other topological invariants of this multi-filtration 
are Gromov--Hausdorff--Prokhorov stable.

\addtocontents{toc}{\protect\setcounter{tocdepth}{2}}

\section{Hierarchical Clustering} \label{Hierarchical-clustering}

The notion of a hierarchical clustering (HC) 
has been formalized in a variety of ways in the clustering literature; 
see \citet{carlsson-memoli-hierarchical-clustering} and references therein. 
In this section we introduce a new formalization of this notion, 
which, in particular, allows for HCs 
with multiple parameters. 
We introduce the 
\emph{correspondence-interleaving} distance between HCs, 
which generalizes the distance on dendrograms introduced by 
\cite{carlsson-memoli-hierarchical-clustering}, 
and we develop its basic properties. 
In later sections of the paper, 
we will use the correspondence-interleaving distance to formulate 
stability and consistency results for 
hierarchical clustering algorithms. 



We also define the degree-Rips and kernel linkage hierarchical clusterings, 
as well as one-parameter slices of these constructions. 
These are the basis for all the clustering methods we consider in the rest of the paper.

\subsection{The Definition of a Hierarchical Clustering}

In order to define the notion of hierarchical clustering, 
we first define the notion of clustering. 
See \cref{hdbscan-data} for an example.

\begin{definition} \label{clustering-def}
Let $\SET$ be a set. 
A \define{clustering} of $\SET$ is a set of non-empty, disjoint subsets of $\SET$. 
The elements of a clustering are called \define{clusters}.
\end{definition}

We will formalize hierarchical clusterings using the notion 
of a partially ordered set. 
There are many good references for this notion, 
for example \cite[Ch. 2.2.2]{chiossi}.

\begin{definition} \label{poset-def}
A \define{partially ordered set (poset)} is a set $P$ 
together with a binary relation~$\preceq$ such that 
$(1)$ for all $p \in P$, $p \preceq p$; 
$(2)$ for all $p,q \in P$, if $p \preceq q$ and $q \preceq p$ then $p = q$; 
$(3)$ for all $p,q,r \in P$, if $p \preceq q$ and $q \preceq r$ then $p \preceq r$. 
If $P,Q$ are posets, and $f : P \to Q$ is a function, 
then $f$ is \define{order-preserving} 
if for all $p,p' \in P$ with $p \preceq p'$, 
$f(p) \preceq f(p')$ in $Q$. 
If~$P$ is a poset, the \define{opposite poset} $P^{\op}$ 
is the poset with the same underlying set, 
and with $p \preceq p'$ in $P^{\op}$ 
if and only if $p \succeq p'$ in $P$.
\end{definition}

\begin{definition} \label{poset-of-clusterings-def}
Let $\SET$ be a set. The \define{poset of clusterings} of $\SET$,
denoted $\C(\SET)$, is the poset whose elements are the clusterings of $\SET$,
and where $S \preceq T \in \C(\SET)$ if, for each cluster $A \in S$, 
there is a (necessarily unique) cluster $B \in T$ such that $A \subseteq B$.
\end{definition}


\begin{definition} \label{P-HC-def}
Let $P$ be a poset, and let $\SET$ be a set. 
A \define{$P$-hierarchical clustering} of $\SET$ 
is an order-preserving function 
$H : P \to \C(\SET)$.
\end{definition}

The notion of a $P$-hierarchical clustering generalizes the dendrograms of 
\citet[Section 3.1]{carlsson-memoli-hierarchical-clustering}, 
where the indexing poset was taken to be $[0, \infty)$.

\begin{definition} \label{n-param-HC}
Let $\SET$ be a set, and let $n \geq 1$. 
An \define{$n$-parameter hierarchical clustering} of $\SET$ 
is a $P$-hierarchical clustering $H : P \to C(\SET)$, 
where $P = I_1 \times \dots \times I_n$ 
with $I_j$ an interval of $\bbR$ or $\bbR^{\op}$ 
for all $1 \leq j \leq n$.
\end{definition}

Note that one-parameter HCs come in two flavors, 
depending on whether clusters merge 
as the real parameter increases or decreases; 
borrowing terminology from category theory, 
if $I \subseteq \bbR$ is an interval, 
we call an $I$-hierarchical clustering 
\emph{covariant}, 
and if $I \subseteq \bbR^{\op}$, 
we call an $I$-hierarchical clustering 
\emph{contravariant}. 
One-parameter HCs can be visualized 
by \emph{dendrograms}: 
see \cref{HC-interleaving}. 
We now give two key examples of one-parameter HCs.


\begin{example} \label{density-contour}
Let $f : \bbR^d \to \bbR$ be a probability density function, 
and let $\support(f)$ be its support. 
Following \cite{hartigan-75}, 
the \define{density-contour} hierarchical clustering $H(f)$ 
is the contravariant, $(0, \infty)^{\op}$-hierarchical clustering of $\support(f)$, 
where, for $r > 0$, $H(f)(r)$ is the set of connected components of 
$\{ x \in \support(f) \, : \, f(x) \geq r \}$. 
\end{example}

\begin{example} \label{single-linkage-def}
Let $\Met$ be a metric space. 
The \define{single-linkage} hierarchical clustering $\SL(M)$ \citep{sibson} 
is the covariant, $(0, \infty)$-hierarchical clustering of $\Met$, 
where, for $r > 0$, $\SL(\Met)(r)$ is the partition of $\Met$ 
defined by the smallest equivalence relation $\sim_{r}$ on $\Met$ with
$x \sim_{r} y$ if $d_\Met (x,y) \leq r$. 
Single-linkage can also be defined in terms of the \define{Rips graph}~$\Rips(\Met)$. 
For $r > 0$, let $\Rips(\Met)_r$ be the graph with vertex set $\Met$ 
and with an edge between~$x$ and $y$ if $d_\Met (x,y) \leq r$. 
Then $\SL(\Met)(r)$ is the partition of $\Met$ by the vertex sets of the connected components of $\Rips(\Met)_r$.
\end{example}

We now describe a way to \emph{extend} 
any $n$-parameter hierarchical clustering 
$H : P \to \C(\SET)$ to an $\bbR^n$-hierarchical clustering 
$\extension{H} : \bbR^n \to \C(\SET)$. 
This will be useful when we consider distances between HCs, 
since we can compare any two $n$-parameter HCs, 
with possibly different indexing posets, 
by first extending them to $\bbR^n$-HCs, 
and then comparing the extensions. 
The idea is to first make $H$ covariant in each parameter, 
by replacing any interval of $\bbR^{\op}$ in $P$ with its negative, 
and then to extend $H$ to all of $\bbR^n$ 
using the empty clustering $\emptyset$ (the minimum in $\C(\SET)$) 
and the clustering $\{\SET\}$ (the maximum in $\C(\SET)$).

Say $(I, \preceq) \subseteq \bbR^{\op}$ is an interval: 
as a set $I$ is a real interval, 
and $a \preceq b$ in $I$ if and only if 
$a \geq b$ as real numbers. 
Let $-I = \{-a : a \in I\}$. 
There is an isomorphism of posets 
$\rho_I : (-I, \preceq)^{\op} \to (I, \preceq)$ 
with $\rho_I(a) = -a$, 
and $(-I, \preceq)^{\op} = (-I, \leq)$ 
is an interval of $\bbR$. 

\begin{definition} \label{extension}
Say $P = I_1 \times \dots \times I_n$ 
with each $I_j$ an interval of $\bbR$ or $\bbR^{\op}$. 
Let $P'$ be the poset obtained from $P$ 
by replacing each interval $I_j \subseteq \bbR^{\op}$ 
with the interval $-I_j \subseteq \bbR$. 
Then we have an isomorphism of posets 
$\rho_P : P' \to P$. 
If $\SET$ is a set, and $H : P \to \C(\SET)$ 
is an $n$-parameter hierarchical clustering of $\SET$, 
let $H' : P' \to \C(\SET)$ be $H \circ \rho_P$. 
The \define{extension} of $H$ 
is the $\bbR^n$-hierarchical clustering 
$\extension{H} : \bbR^n \to \C(\SET)$ with 
\[
    \extension{H}(r) =
    \begin{cases}
    	H'(r) &\text{ if $r \in P'$}\\
    	\{\SET\} &\text{ if $r \in \bbR^n \setminus P'$ and there is $p \in P'$ with $p < r$}\\
    	\emptyset &\text{ else}.
    \end{cases}
\]
\end{definition}

\subsection{The Correspondence-Interleaving Distance} \label{distances-subsection}

The distances for hierarchical clusterings we consider 
are based on the notion of \emph{interleaving}, 
which we have adapted from persistent homology 
\citep{chazal-etal-interleavings}. 
In the HC setting, 
interleavings have a simple definition, 
which we now give.

\begin{notation}
We write $\vec{\epsilon} = (\epsilon_1, \dots, \epsilon_n) \geq \vec{0}$
if $\epsilon_i \geq 0$ for $1 \leq i \leq n$.
\end{notation}

\begin{definition} \label{epsilon-interleaving}
Let $H$ and $E$ be $n$-parameter hierarchical clusterings of a set $\SET$, 
and let $\vec{\epsilon} \geq \vec{0}$. 
We say that $H$ and $E$ are $\vec{\epsilon}$-\define{interleaved} if, 
for all $\vec{r} \in \bbR^n$, 
we have $\extension{H}(\vec{r}) \preceq \extension{E}(\vec{r}+\vec{\epsilon})$ and 
$\extension{E}(\vec{r}) \preceq \extension{H}(\vec{r}+\vec{\epsilon})$ in $\C(\SET)$.
\end{definition}

\begin{definition} \label{interleaving-distance}
Let $H$ and $E$ be $n$-parameter hierarchical clusterings of a set $\SET$. 
Define the \define{interleaving distance}
\[
	\dI (H, E) = \inf \{ \epsilon \geq 0 : 
	H, E \text{ are } (\epsilon, \dots, \epsilon)\text{-interleaved} \}.
\]
\end{definition}

In the special case of one-parameter HCs, 
the interleaving distance has a very concrete, alternative formulation. 
We give this now, in order to provide intuition 
for interleavings.

\begin{definition} \label{mergefun}
Let $H : I \to \C(\SET)$ be a one-parameter hierarchical clustering of a set $\SET$. 
Define $\mergefun{H} \colon \SET \times \SET \to [-\infty, \infty]$ by
$\mergefun{H}(x,y) = \inf \{ r \in \bbR : 
\exists C \in \extension{H}(r), \; x,y \in C \}$. 
We say $H$ is an \define{ultrametric hierarchical clustering} 
if $I = [0, \infty)$; 
for all $x \in \SET$, 
there is $r_x > 0$ such that 
for any $r$ in the interval $[0, r_x)$, 
the clustering $H(r)$ contains 
the singleton cluster $\{x\}$; 
and there is $r \in [0, \infty)$ such that $H(r) = \{\SET\}$. 
\end{definition}

For example, the single-linkage of a finite metric space 
is an ultrametric hierarchical clustering. 
If $H$ is an ultrametric hierarchical clustering of $\SET$, 
then the function $\mergefun{H}$ 
defines an ultrametric on $\SET$. 
See \cite{carlsson-memoli-hierarchical-clustering} 
for a detailed discussion of this perspective. 
For $H, E$ one-parameter HCs of $\SET$, we write 
$d_{\infty}(\mergefun{H}, \mergefun{E}) = 
\sup \{ |\mergefun{H}(x,y) - \mergefun{E}(x,y)| : x,y \in \SET \}$.

\begin{proposition} \label{dI-and-infty-distance}
If $H$ and $E$ are one-parameter hierarchical clusterings of a set $\SET$, 
then $\dI(H, E) = d_{\infty}(\mergefun{H}, \mergefun{E})$.
\end{proposition}

The proof is elementary; see \cref{appendix-HC}. 
This formulation of the interleaving distance 
shows that, if $H$ and $E$ are $\epsilon$-interleaved, 
then the parameter values where clusters 
are born and merge are perturbed by at most $\epsilon$. 
See \cref{HC-interleaving} for an example. 
We now give a simple example of a stability result that can be formulated using interleavings. 

\begin{figure}[tbp]
    \centering
    \includegraphics[width=0.6\textwidth]{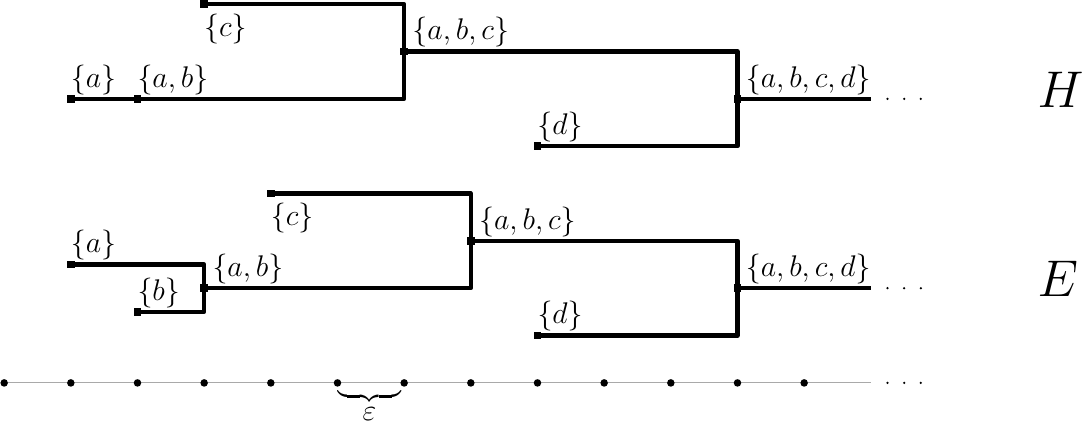}
    \caption{Two dendrograms representing one-parameter, covariant HCs $H$ and $E$
    of the set $\{a,b,c,d\}$. 
    The parameter values where points enter the HC, and where clusters merge, are perturbed by at most $\epsilon$,
    so the HCs are $\epsilon$-interleaved.}
    \label{HC-interleaving}
\end{figure}


\begin{proposition} \label{Hf-stability}
Let $f, g : \bbR^d \to \bbR_{\geq 0}$ 
be probability density functions with the same support. 
Then $\dI(H(f), H(g)) = || f - g ||_{\infty}$.
\end{proposition}

We give an elementary proof in \cref{appendix-HC}. 
This kind of stability result for real-valued functions is standard in topological data analysis. 
See, for example, \citet[Example 4.3]{chazal-silva-glisse-oudot}. 
We now extend the interleaving distance 
to HCs of different sets, 
using correspondences.

\begin{definition} \label{correspondence-def}
A \define{correspondence} $R$ between sets $X$ and $Y$ is given by a set
$R \subseteq X \times Y$ such that the projections $\pi_X : R \to X$ and
$\pi_Y : R \to Y$ are surjective.
\end{definition}

If $\psi : Y \to X$ is a function between sets, 
and $S = \{C_i\}$ is a clustering of $X$, 
then $\psi^{*}(S) = \{\psi^{-1}(C_i)\}$ is a clustering of $Y$. 
This defines an order-preserving map $\psi^{*} : \C(X) \to \C(Y)$. 
If $P$ is a poset and $H$ is a $P$-hierarchical clustering of $X$, 
then $\psi^{*}(H) = \psi^{*} \circ H$ is a $P$-hierarchical clustering of $Y$.

\begin{definition} \label{epsilon-interleaved-wrt-R}
Let $H$ and $E$ be $n$-parameter hierarchical clusterings 
of sets $X$ and $Y$ respectively,
let $R \subseteq X \times Y$ be a correspondence, 
and let $\vec{\epsilon} \geq 0$.
We say that $H$ and $E$ are $\vec{\epsilon}$-\define{interleaved 
with respect to} $R$ if $\pi_X^{*}(H)$ and $\pi_Y^{*}(E)$ 
are $\vec{\epsilon}$-interleaved as 
$n$-parameter hierarchical clusterings of $R$.
\end{definition}

\begin{definition} \label{CI-definition}
Let $H$ and $E$ be $n$-parameter hierarchical clusterings 
of sets $X$ and $Y$ respectively. 
Define the \define{correspondence-interleaving distance}
\[
	\dWI (H, E) = \inf_{R} \inf \{ \epsilon \geq 0 : 
	H, E \text{ are } (\epsilon, \dots, \epsilon)\text{-interleaved w.r.t. } R \}.
\]
where the infimum is over all correspondences $R$ between $X$ and $Y$.
\end{definition}

Aside from set-theoretic concerns, $\dWI$ defines an extended-pseudo-metric 
on $n$-pa\-ram\-e\-ter hierarchical clusterings 
(see \cref{appendix-HC} for the elementary proof):

\begin{proposition} \label{CI-distance-properties}
The distance $\dWI$ satisfies the following properties, 
for all $n$-parameter hierarchical clusterings:
$(1)$ for any $H$, $\dWI(H,H) = 0$; 
$(2)$ for any $H, E$, $\dWI(H, E) = \dWI(E, H)$; 
$(3)$ for any $H, E, F$, $\dWI(H, F) \leq \dWI(H, E) + \dWI(E, F)$.
\end{proposition}

Using correspondences to extend 
the interleaving distance to HCs 
of different sets is inspired by 
the Gromov--Hausdorff distance from metric geometry \citep[Chapter 7.3]{burago-etal}. 
In fact, there is a close connection between 
the correspondence-interleaving distance 
and the Gromov--Hausdorff distance. 
In their work on hierarchical clustering, 
\cite{carlsson-memoli-hierarchical-clustering} 
use the Gromov--Hausdorff distance between 
the ultrametrics induced by HCs such as 
the single-linkage HC of a finite metric space. 
We now recall the definition of the Gromov--Hausdorff distance, 
and show that the correspondence-interleaving distance 
recovers this distance, 
in the special case of ultrametric hierarchical clusterings.

\begin{definition} \label{Hausdorff-distance}
Let $A, B$ be compact subsets of a metric space $\Met$. 
The \define{Hausdorff distance} between $A$ and $B$ is 
$\dH^\Met(A,B) = 
\inf \{ \epsilon > 0 : A \subseteq B^{\epsilon} \text{ and } 
B \subseteq A^{\epsilon}\}$, 
where, for any $W \subseteq \Met$, $W^{\epsilon} = 
\{x \in \Met : \exists w \in W, d_\Met(x,w) < \epsilon\}$.
\end{definition}

\begin{definition} \label{GH-distance}
Let $\Met,\Metalt$ be compact metric spaces. 
The \define{Gromov--Hausdorff} distance is
\[
	\dGH(\Met,\Metalt) = \inf_{i, j} \; \dH^Z(i(\Met), j(\Metalt)) \; ,
\]
where the infimum is taken over all isometric embeddings $i : \Met \to Z$ and 
$j : \Metalt \to Z$ into a common metric space $Z$.
\end{definition}

\begin{proposition} \label{CI-and-GH}
Let $H$ and $E$ be ultrametric hierarchical clusterings 
of sets $X$ and $Y$ respectively. 
Then $\dWI(H, E) = 2 \cdot 
\dGH\left( (X, \mergefun{H}), (Y, \mergefun{E}) \right)$.
\end{proposition}

\begin{proof}
Let $R$ be a correspondence between $X$ and $Y$. 
One says that the \emph{distortion} of $R$ is 
$\distortion(R) = \sup \{|\mergefun{H}(x,x') - \mergefun{E}(y,y')| : 
(x,y),(x',y') \in R\}$ 
\cite[Definition 7.3.21]{burago-etal}. 
Then, one has 
$\dGH\left( (X, \mergefun{H}), (Y, \mergefun{E}) \right) 
= \frac{1}{2} \inf_{R} \distortion(R)$, 
where the infimum is taken over all correspondences between $X$ and $Y$ 
\cite[Theorem 7.3.25]{burago-etal}. 
Now, the proposition follows from the fact that, 
for any correspondence $R$, 
$\distortion(R) = 
\inf \{\epsilon \geq 0 : 
H,E \text{ are } \epsilon\text{-interleaved w.r.t. } R\}$, 
which is \cref{dis-interleaving-lemma}.
\end{proof}

\subsection{Degree-Rips and Kernel Linkage} \label{robust-linkage-section}

We now introduce \emph{degree-Rips} and \emph{kernel linkage}, 
the multiparameter hierarchical clustering methods 
that are the basis for all the clustering algorithms we consider in this paper. 
In the introduction, we described degree-Rips in the case that the input is a finite metric space. 
However, it is convenient to consider a natural generalization of this construction. 
Metric measure spaces \citep{gromov, villani} 
are metric spaces together with a Borel measure \citep{dudley}. 
Since the measures we consider will always be probability measures, 
we use the notion of metric probability space:


\begin{definition} \label{mps-definition}
A \define{metric probability space} consists of a metric space $\MPS$
together with a Borel probability measure $\mu_\MPS$ on $\MPS$.
\end{definition}

The degree-Rips hierarchical clustering we define in this section takes a metric probability space as input. 
If $\Met$ is a finite metric space, and one equips $\Met$ with the \emph{uniform measure} $\mu_{\Met}$, 
such that $\mu_{\Met}(A) = |A| \, / \, |\Met|$ for any $A \subseteq \Met$, 
then the degree-Rips hierarchical clustering of $(\Met, \mu_{\Met})$ 
recovers the version of degree-Rips we described in the introduction. 
Unless otherwise stated, we equip finite metric spaces with the uniform measure.

Working in the generality of metric probability spaces has two main advantages. 
First, if $f$ is a density function on Euclidean space, we can consider the degree-Rips hierarchical clustering 
of the metric probability space $(\support(f), \mu_f)$, 
where $\support(f)$ is the support of $f$, and $\mu_f$ is the probability measure defined by $f$. 
This construction plays a key role in the proof of our consistency theorem. 
Second, finite metric spaces with non-uniform measures are useful for computational purposes. 
In \cref{subsampling-approximation}, we describe an approximation scheme for degree-Rips, 
in which a large input $\Met$ (a finite metric space) is approximated by a small subset $\Metalt \subset \Met$, 
where $\Metalt$ has a non-uniform measure that approximates the uniform measure of $\Met$.

\begin{definition} \label{uniform-filtration-def}
Let $\MPS$ be a metric probability space, and let $s,k > 0$.
Let $\Uniform(\MPS)\upar{s,k} = \left\{x \in \MPS \; : \; \mu_{\MPS}(B(x, s)) \geq k\right\}$. 
Here and throughout the paper, $B(x,s)$ is the open ball in $\MPS$ of radius $s$ centered at $x$. 
We have $\Uniform(\MPS)\upar{s,k} \subseteq \Uniform(\MPS)\upar{s',k'}$ 
whenever $s' \geq s$ and $k' \leq k$. This forms a $2$-parameter filtration of $\MPS$, 
which we call the \define{uniform filtration} of $\MPS$.
\end{definition}

\cite{blumberg-lesnick-2param-published} call this the ``measure bifiltration''. 
We combine the uniform filtration with single-linkage (\cref{single-linkage-def}) to define degree-Rips.

\begin{definition} \label{def-DR}
Let $\MPS$ be a metric probability space. 
Define the \define{degree-Rips} hierarchical clustering of $\MPS$ as
the $2$-parameter hierarchical clustering:
\begin{align*}
	\DR(\MPS) : \bbRs \times \bbRs^{\op} &\to \C(\MPS) \\
	(s, k) &\mapsto \SL\left(\Uniform(\MPS)\upar{s,k}\right)(s) \; .
\end{align*}
\end{definition}

See \cref{DR-intro-figure} for an illustration of degree-Rips. 
As described in the introduction, 
we are motivated to consider the degree-Rips hierarchical clustering because of its close connection 
to well-established methods for data analysis, such as the DBSCAN clustering algorithm 
and the degree-Rips bifiltration. 
However, there are some choices baked in to the definition that may not be optimal for some applications. 
So, we will define a generalization of degree-Rips, which we call \emph{kernel linkage}. 

As motivation, notice that degree-Rips estimates the density near a point $x$ 
by taking the measure of the ball $B(x,s)$. 
Equivalently, with respect to the measure $\mu_{\MPS}$, 
one integrates the \emph{uniform kernel}, which is equal to one on this ball and vanishes elsewhere. 
One could just as well use another kernel when estimating density, 
as with kernel density estimators \citep{silverman86}. 
Second, notice that the definition of degree-Rips uses the $s$ parameter twice: 
as the radius of the ball $B(x,s)$, and as the spatial parameter for single-linkage. 
It is not necessary for these two values to be equal, 
and in fact, the robust single-linkage algorithm (\cref{robust-single-linkage}) 
allows these two values to differ by a constant factor. 
These two observations motivate the definition of kernel linkage.

In the setting of non-parametric density estimation, 
a \emph{kernel} \citep[Ch. 4.2]{silverman86} 
quantifies local-ness; 
given a point $x$, a kernel quantifies the extent to which any other point~$x'$ is close to $x$. 
Since we are working with metric spaces, 
we will apply kernel functions to the distance between $x$ and $x'$. 

\begin{definition} \label{definition-of-kernel}
A \define{kernel} is a non-increasing function $K : \bbR_{\geq 0} \to \bbR_{\geq 0}$
that is continuous from the right and such that $0 < \int_0^\infty K(r) \dd r < \infty$.
\end{definition}

Note that, in particular, $K(0) > 0$ and $\lim_{r \to \infty} K(r) = 0$.

\begin{example} \label{uniform-kernel}
Many kernels used for density estimation are kernels in the above sense 
(see \cref{reparametrize-for-consistency}). 
We will be particularly interested in $K = \mathbf{1}_{\{r < 1\}} : \bbR_{\geq 0} \to \bbR_{\geq 0}$, 
with $K(x) = 1$ if $x < 1$ and $K(x) = 0$ otherwise. 
We refer to this as the \define{uniform kernel}.
\end{example}

\begin{definition} \label{local-density-estimate}
Let $K$ be a kernel, and let $\MPS$ be a metric probability space.
Define the \define{local density estimate} of a point $x \in \MPS$ at scale $s > 0$ as
\[
    \conv{\mu_\MPS}{K}{s}(x) := 
    \int_{x' \in \MPS} K\left(\frac{d_\MPS(x,x')}{s}\right) \dd\mu_\MPS.
\]
\end{definition}

\begin{remark} \label{reparametrize-for-consistency}
Let $\MPS$ be the metric probability space given by 
Euclidean space $\bbR^d$ 
equipped with the empirical measure defined by a finite set of points $Z \subset \bbR^d$. 
The formula for the local density estimate is
\[
	\conv{\mu_\MPS}{K}{s}(x) = 
	\frac{1}{|Z|} \, \sum_{z \in Z} K \left( \frac{|| x - z ||}{s} \right) \, .
\]
Based on the usual formula for kernel density estimates 
(\citealp[Section 4.2.1]{silverman86}), 
one might expect a factor of $1 / s^d$ here. 
However, we need our local density estimate to be monotonic in $s$, 
in order to define the kernel filtration, below. 
In effect, one can re-introduce the factor $1 / s^d$ 
after taking one-parameter slices, 
and this is what we do to prove our consistency result 
(see \cref{phi-and-rescaling-lambda}).
\end{remark}

\begin{definition} \label{kernel-density-filtration-def}
Let $K$ be a kernel, let $\MPS$ be a metric probability space, and let $s,k > 0$.
Let $\MPS\upar{s,k} = \left\{x \in \MPS \; : \; \conv{\mu_\MPS}{K}{s}(x) \geq k\right\}$.
Note that, since $K$ is non-increasing, we have $\MPS\upar{s,k} \subseteq \MPS\upar{s',k'}$ 
whenever $s' \geq s$ and $k' \leq k$. This forms a $2$-parameter filtration of $\MPS$, 
which we call the \define{kernel filtration} of $\MPS$.
\end{definition}

In analogy to the definition of degree-Rips, 
we combine the kernel filtration with single-linkage to define kernel linkage:

\begin{definition} \label{def-robust-linkage}
Let $K$ be a kernel, and let $\MPS$ be a metric probability space. 
Define the \define{kernel linkage} of $\MPS$ as
the $3$-parameter hierarchical clustering of $\MPS$:
\begin{align*}
	\mml^K(\MPS) : \bbRs \times \bbRs \times \bbRs^{\op} &\to \C(\MPS) \\
	(s, t, k) &\mapsto \SL\left(\MPS\upar{s,k}\right)(t) \; .
\end{align*}
If there is no risk of confusion, we suppress $K$ from the notation, and write $\mml(\MPS)$.
\end{definition}


To build intuition about kernel linkage, 
it is helpful to first think about degree-Rips, which is easier to visualize. 
We provide examples and visualizations in \cref{Persistable}, where we describe Persistable. 
The interested reader may wish to look at these visualizations before reading 
the theoretical material in \cref{Stability}.

\subsection{Slices of Kernel Linkage and $\lambda$-linkage} \label{gamma-linkage-section}

We now formally define the notion of a one-parameter slice of a hierarchical clustering. 
This is analogous to taking a one-parameter slice of a multiparameter persistence module; 
see the Related Work section of the introduction for references. 
Taking one-parameter slices of kernel linkage, 
one recovers well-known methods for density-based clustering.

\begin{definition} \label{curve}
Let $P$ be a poset. 
A \define{curve} in $P$ 
is given by an interval $I_{\gamma}$ 
of $\bbR$ or $\bbR^{\op}$, 
and an order-preserving function $\gamma : I_{\gamma} \to P$. 
If $H : P \to \C(X)$ is a $P$-hierarchical clustering of a set $X$, 
and $\gamma : I_{\gamma} \to P$ is a curve in $P$, 
then the \define{slice} of $H$ by $\gamma$ 
is the one-parameter hierarchical clustering 
$H^{\gamma} : I_{\gamma} \to \C(X)$ given by $H \circ \gamma$.
\end{definition}


As discussed in the introduction, 
some well-known methods for density-based clustering can be recovered by taking slices of kernel linkage.

\begin{example} \label{robust-single-linkage}
The robust single-linkage algorithm 
of \cite{chaudhuri-dasgupta-10} 
can be recovered by taking slices of kernel linkage. 
Let $\Met$ be a finite metric space with $n = |\Met|$. 
Let $\kappa \in \bbN$ be the density threshold parameter 
of robust single-linkage, 
and let $\alpha > 0$ be its scale parameter. 
The robust single-linkage of $\Met$ is 
$\RL_{\kappa, \alpha}(\Met) = \mml^{K}(\Met)^{\gamma}$, 
where we take $K$ to be the uniform kernel, 
and $\gamma$ is the covariant curve 
$\gamma \colon (0, \infty) \to \bbRs^{\times 3}$ with 
$\gamma(r) = (r, \alpha r, \kappa / n)$. 
This is a line through 
the kernel linkage parameter space, 
which fixes the density threshold parameter 
\textup{k}, and allows the spatial parameters 
\textup{s} and \textup{t} to vary.
\end{example}

\begin{example} \label{vertical-slices}
If we fix the spatial parameters \textup{s} and \textup{t}, 
and allow the density threshold parameter \textup{k} to vary, 
we recover the plug-in algorithm for density-based clustering, 
described in the introduction. 
See, for example, 
\cite{Cuevas-Febrero-Fraiman, 
chazal-guibas-oudot-skraba}. 
In detail, let $\Met$ be a finite metric space. 
For any $s,t > 0$, and for any kernel $K$, 
the plug-in hierarchical clustering of $\Met$ is 
$\RG_{s,t}^K(\Met) = \mml^{K}(\Met)^{\gamma}$ 
for the contravariant curve 
$\gamma \colon (0, \infty) \to \bbRs^{\times 3}$ with $\gamma(r) = (s, t, r)$. 
\end{example}


Slices in which one parameter is fixed, 
like in the previous two examples, 
lead to stability problems, 
as we show in \cref{instability-related-methods}. 
Moreover, such slices can struggle to capture multi-scale cluster structure in data 
(see the rideshare data in \cref{Persistable-on-benchmark-data}). 
So, for Persistable, 
we use lines in the kernel linkage parameter space 
in which all parameters vary.

%

\begin{example} \label{lambda-notation}
For Persistable, we take slices of kernel linkage 
by a family of curves $\lambda$ that we specify now. 
See \cref{slicing-intro-figure}. 
Each $\lambda$ parameterizes a line 
in the $(s, k)$-space $\bbRs \times \bbRs^{\op}$, 
and we extend this to a curve in the 
$(s, t, k)$-space $\bbRs \times \bbRs \times \bbRs^{\op}$ by setting $s = t$. 
We specify a line $\lambda$ by choosing 
an $s$-intercept $x > 0$ and a $k$-intercept $y > 0$. 
We write $\lambda^{x,y}$ if we need to specify the intercepts. 
Let $\sigma = -y / x$ be the slope of $\lambda$. 
If we parameterize $\lambda$ with the $k$ coordinate, 
we get the curve 
$\lambda\con : (0, y)^{\op} \to \bbRs \times \bbRs \times \bbRs^{\op}$ 
defined by $\lambda\con(r) = \left( (r / \sigma) + x, \; (r / \sigma) + x, \; r \right)$. 
If we parameterize with the $s$ coordinate, we get the curve 
$\lambda\cov : (0, x) \to \bbRs \times \bbRs \times \bbRs^{\op}$ 
defined by $\lambda\cov(r) = \left( r, \; r, \; \sigma r + y \right)$. 

We say that the $\lambda$-linkage of a metric probability space $\MPS$ 
is the hierarchical clustering  
\[
	\linkage{\lambda}(\MPS) := \mml^{K}(\MPS)^{\lambda}
\]
where $K$ is the uniform kernel. 
Since we use the uniform kernel, 
the slices $\linkage{\lambda}$ are slices of the degree-Rips hierarchical clustering.
\end{example}

When the input is a finite metric space, 
computing $\lambda$-linkage is similar to computing robust single-linkage. 
So, one can adapt the algorithms of \cite{mcinnes-healy} to compute $\lambda$-linkage. 
This is what we do for our implementation of Persistable \citep{scoccola-rolle-joss}.

\section{Stability} \label{Stability}

In the introduction to this paper, we stated \cref{intro-DR-stability}, 
which says that the degree-Rips hierarchical clustering method is 2-Lipschitz, 
with respect to the Gromov--Hausdorff--Prokhorov distance on finite metric spaces, 
and the correspondence-interleaving distance on hierarchical clusterings. 
In \cref{robust-linkage-section} we defined the degree-Rips hierarchical clustering 
not just of a finite metric space, but in the generality of metric probability spaces. 
In this section, we prove that degree-Rips is 2-Lipschitz for compact metric probability spaces 
(this includes \cref{intro-DR-stability} as a special case). 
Furthermore, we consider the kernel linkage construction, 
also defined in \cref{robust-linkage-section}, 
and show that it is uniformly continuous with respect to the 
Gromov--Hausdorff--Prokhorov and correspondence-interleaving distances.

\subsection{Stability of Kernel Linkage} \label{stability-robust-linkage}

We begin by recalling the definition of the Gromov--Hausdorff--Prokhorov distance. 
See \citet[p. 762]{villani} or \citet{miermont} 
(though note that \citealt{villani} takes a sum instead of the maximum 
of $\dH$ and $\dP$ in the definition). 
We discussed the Hausdorff distance in \cref{distances-subsection}. 
The second ingredient we need is the Prokhorov distance \citep[Chapter 11.3]{dudley}.

\begin{definition} \label{Prokhorov-distance}
Let $\mu, \nu$ be Borel probability measures on a metric space $\Met$. 
The \define{Prokhorov distance} between $\mu$ and $\nu$ is 
\[
	\dP(\mu, \nu) = \inf \{ \epsilon > 0 \, : \, \mu(A) \leq \nu(A^{\epsilon}) + \epsilon 
			\; \text{and} \; \nu(A) \leq \mu(A^{\epsilon}) + \epsilon 
			\text{ for all Borel sets} \; A \subseteq \Met\} \, .
\]
\end{definition}

Now, the Gromov--Hausdorff--Prokhorov distance is a metric on the set of 
isometry-equivalence classes of compact metric probability spaces 
(see, e.g., \citealp{miermont}).

\begin{definition}
    \label{ghp-distance}
Let $(\MPS, \mu_\MPS), (\MPSalt, \mu_\MPSalt)$ be compact metric probability spaces. 
The \define{Gromov--Hausdorff--Prokhorov} distance between $(\MPS,\mu_\MPS)$ and $(\MPSalt,\mu_\MPSalt)$ is
\[
	\dGHP(\MPS,\MPSalt) = \inf_{i, j} \; \left\{ \max(\dH^Z(i(\MPS), j(\MPSalt)), 
	\dP(i_* \mu_\MPS, j_* \mu_\MPSalt)) \right\} \; ,
\]
where the infimum is taken over all isometric embeddings $i : \MPS \to Z$ and 
$j : \MPSalt \to Z$ into a common metric space $Z$.
\end{definition}

Before proving the stability of kernel linkage, we define a canonical correspondence between
two compact metric spaces embedded in a common metric space.

\begin{definition} \label{closest-points-correspondence}
Let $\Met$ and $\Metalt$ be compact metric spaces, let $Z$ be any metric space, 
and let $i : \Met \to Z$ and $j : \Metalt \to Z$ be isometric embeddings.
Define the \define{closest point correspondence} $R_c \subseteq \Met \times \Metalt$, where
$(x,y) \in R_c$ if and only if $d_Z(i(x),j(y)) = \min_{y' \in \Metalt} d_Z(i(x),j(y'))$
or $d_Z(i(x),j(y)) = \min_{x' \in \Met} d_Z(i(x'),j(y))$.
\end{definition}

\begin{theorem} \label{continuity-of-mml}
Kernel linkage is uniformly continuous with respect to
the Gromov--Haus\-dorff--Prokhorov distance on compact metric probability spaces, 
and the correspondence-interleaving distance.
If kernel linkage is defined using the uniform kernel, then it is $2$-Lipschitz.
\end{theorem}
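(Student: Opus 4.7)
The plan is to work inside a common ambient metric space and transport clusters using the closest-point correspondence. Given compact metric probability spaces with $\dGHP(X,Y) < \epsilon$, I would first choose isometric embeddings $i : X \to Z$, $j : Y \to Z$ with $\dH^Z(i(X), j(Y)) < \epsilon$ and $\dP(i_*\mu_X, j_*\mu_Y) < \epsilon$, and take the closest-point correspondence $R_c \subseteq X \times Y$ from \cref{closest-points-correspondence}.

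The first step is to transport the kernel filtration. For any $(x, y) \in R_c$ one has $d_Z(x, y) < \epsilon$, so \cref{localdensitybounds}, applied with an auxiliary parameter $r' \in (0, K(0))$, yields $\conv{\mu_X}{K}{s}(x) \leq \conv{\mu_Y}{K}{s + \epsilon_s}(y) + \epsilon_k$ with $\epsilon_s = 2\epsilon / K^{-1}(r')$ and $\epsilon_k = K(0) (K(0)/r' - 1) + K(0) \epsilon$. Thus, if $x \in X_{[s, k]}$ then $y \in Y_{[s + \epsilon_s, k - \epsilon_k]}$, and symmetrically.

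To lift this to an interleaving of $\mml(X)$ and $\mml(Y)$, given a cluster $A \in \mml(X)(s, t, k)$ and a single-linkage chain inside it, I would pick $R_c$-partners of each chain element; by the previous step these lie in $Y_{[s+\epsilon_s, k-\epsilon_k]}$, and the triangle inequality bounds consecutive distances by $t + 2\epsilon$, so they sit in a common cluster of $\mml(Y)(s+\epsilon_s, t+2\epsilon, k-\epsilon_k)$, which I designate $R_X(A)$. For the measure condition with $m = \epsilon$, the key observation is $(\bigcup A_i)^\epsilon \cap Y \subseteq \bigcup R_X(A_i)$: any $y \in Y$ within $\epsilon$ of some $x \in A_i$ itself lies in the shifted filtration (applying \cref{localdensitybounds} directly to the pair $x,y$), and if $y'(x)$ is an $R_c$-partner of $x$ then $y'(x) \in R_X(A_i)$ and $d(y, y'(x)) < 2\epsilon$, so $y$ and $y'(x)$ are single-linkage connected at scale $t + 2\epsilon$. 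The Prokhorov bound $\mu_X(\bigcup A_i) \leq \mu_Y((\bigcup A_i)^\epsilon) + \epsilon$ then delivers the desired measure inequality.

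Finally, to extract both conclusions of the theorem: for the uniform kernel $K(0) = 1$ and $K^{-1}(r') = 1$ for all $r' \in (0,1)$, so letting $r' \nearrow 1$ gives $\epsilon_s \to 2\epsilon$ and $\epsilon_k \to \epsilon$, producing a $(2\epsilon, 2\epsilon, \epsilon; \epsilon)$-measured interleaving whose largest coordinate is $2\epsilon$, yielding the Lipschitz bound. For a general kernel there is a trade-off, since $K(0)(K(0)/r'-1) \to 0$ as $r' \nearrow K(0)$ while $K^{-1}(r') \to 0$ at the same time; choosing $r'$ close enough to $K(0)$ first and then $\epsilon$ small enough gives uniform continuity. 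I expect the main obstacle to be the measure step: verifying that every $y$ in the $\epsilon$-neighborhood of $\bigcup A_i$ actually lies in some $R_X(A_i)$, even when $y$ is not itself the $R_c$-partner of any $x \in \bigcup A_i$. It is precisely this point that forces the $2\epsilon$ shift in the $t$ coordinate and, for general kernels, couples the two shift quantities $\epsilon_s$ and $\epsilon_k$ through \cref{localdensitybounds}.
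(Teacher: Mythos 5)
Your proposal is correct and follows essentially the same route as the paper's proof: the closest-point correspondence $R_c$, \cref{localdensitybounds} to shift the kernel filtration, a chain argument for the single-linkage part, the Prokhorov inequality for the measure condition, the same shifts $\left(\epsilon_s, 2\epsilon, \epsilon_k; \epsilon\right)$, and the same limiting argument $r' \to K(0)$ for the uniform kernel. The only deviation is in the measure step, where you apply the Prokhorov bound to $\bigcup_i i(A_i)$ directly and connect each $y$ in its $\epsilon$-neighborhood to an $R_c$-partner of a point of $A_i$ in a single hop, whereas the paper applies it to the larger pushforward clusters $\bigcup_n i_*(C_n)$ of $\mml^K(Z)$ and therefore needs a $t$-chain through the ambient space $Z$; your version is a mild and valid streamlining of the same idea.
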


\begin{proof}
Let $K$ be a kernel. 
We prove the following: 
for every $\epsilon > 0$, there exists $\delta > 0$ 
such that if $\MPS$ and $\MPSalt$ are compact metric probability spaces and $i : \MPS \to Z$ and $j : \MPSalt \to Z$ 
are isometric embeddings into a metric space $Z$ with 
$\dH(i(\MPS), j(\MPSalt)),\dP(i_* \mu_\MPS, j_* \mu_\MPSalt) < \delta$, 
then $\mml^{K}(\MPS)$ and $\mml^{K}(\MPSalt)$ are 
$(\epsilon, \epsilon, \epsilon)$-interleaved with respect to 
the closest point correspondence $R_c \subseteq \MPS \times \MPSalt$. 

Let $r' \in (0,K(0))$ and $\delta > 0$,
and define $\delta_s = \frac{2\delta}{K^{-1}(r')}$ and
$\delta_k = K(0)^2/r' - K(0) + K(0) \delta$.
We now prove that if $\MPS$ and $\MPSalt$ are compact metric probability spaces and $i : \MPS \to Z$ and $j : \MPSalt \to Z$ 
are isometric embeddings with $\dH(i(\MPS), j(\MPSalt)),\dP(i_* \mu_\MPS, j_* \mu_\MPSalt) < \delta$, 
then $\mml^{K}(\MPS)$ and $\mml^{K}(\MPSalt)$ are
$(\delta_s,2\delta,\delta_k)$-interleaved with respect to $R_c$.
This implies the statement of the previous paragraph, by taking
$r' \in (0,K(0))$ such that $K(0)^2/r' - K(0) < \epsilon/2$,
and $\delta > 0$ such that $2\delta/K^{-1}(r') < \epsilon$, $2\delta< \epsilon$, 
and $K(0) \delta < \epsilon / 2$.

It suffices to show that, for any $s,t > 0$ and $k > \delta_k$, we have relations in $\C(R_c)$:
\begin{align*}
	\pi_\MPS^{*}\left(\mml^{K}(\MPS)\right)(s,t,k) &\preceq
    \pi_\MPSalt^{*}\left(\mml^{K}(\MPSalt)\right)(s+\delta_s, t+2\delta, k-\delta_k) \\
	\pi_\MPSalt^{*}\left(\mml^{K}(\MPSalt)\right)(s,t,k) &\preceq
	\pi_\MPS^{*}\left(\mml^{K}(\MPS)\right)(s+\delta_s, t+2\delta, k-\delta_k) \, .
\end{align*}
We show that the first relation holds, 
and the second relation follows from a symmetric argument. 
Let $(x,y) \in R_c$. If $(x,y)$ belongs to a cluster of
$\pi_\MPS^{*}(\mml^{K}(\MPS))(s,t,k)$, then it belongs to a cluster of
$\pi_\MPSalt^{*}(\mml^{K}(\MPSalt))(s+\delta_s, t+2\delta, k-\delta_k)$,
by \cref{localdensitybounds}.
Now, assume that $(x,y)$ and $(x',y') \in R_c$ belong to the same cluster in
$\pi_\MPS^{*}(\mml^{K}(\MPS))(s,t,k)$.
This means that $x \sim_t x'$ in $\MPS\upar{s,k}$.
Since $|d_\MPS (x_1,x_2) - d_\MPSalt (y_1,y_2)| < 2\delta$ for every $(x_1,y_1),(x_2,y_2) \in R_c$,
we have that $y \sim_{t+2\delta} y'$ in $\MPSalt\upar{s+\delta_s, k-\delta_k}$ as required. 

It remains to consider the case where $K$ is the uniform kernel. 
Then $K(0) = 1$, and, for every $r' \in (0,1)$
we have $K^{-1}(r') = 1$, since $K^{-1} = K$.
Letting $r' \to 1$, the interleaving we constructed above
approaches a $(2\delta, 2\delta, \delta)$-interleaving, as needed.
\end{proof}

\begin{corollary} \label{DR-stability}
If $\MPS$ and $\MPSalt$ are compact metric probability spaces, then 
\[
	\dWI(\DR(\MPS), \DR(\MPSalt)) \leq 2 \cdot \dGHP(\MPS, \MPSalt) \, .
\]
\end{corollary}

\begin{proof}
Since degree-Rips is defined using the uniform kernel, 
it is $2$-Lipschitz by \cref{continuity-of-mml}.
\end{proof}

\cref{continuity-of-mml} implies a similar result for the 
Gromov--Hausdorff--Wasserstein distance, 
which is defined just as in \cref{ghp-distance}, 
except one replaces the Prokhorov distance with the Wasserstein distance 
\citep[p.~424]{gibbs-su}.

\begin{corollary} \label{GHW-stability}
Kernel linkage is uniformly continuous with respect to
the Gromov--Haus\-dorff--Wasserstein distance on compact metric probability spaces, 
and the correspondence-interleaving distance.
\end{corollary}

\begin{proof}
By \citet[Theorem~2]{gibbs-su}, if $\mu$ and $\nu$ are probability measures on a compact metric space, 
then $\dP(\mu, \nu)^2 \leq d_{\mathrm{W}}(\mu, \nu)$, 
where $d_{\mathrm{W}}$ denotes the Wasserstein distance. 
Now the corollary follows immediately from \cref{continuity-of-mml}.
\end{proof}

\begin{remark} \label{choice-of-GHP}
We now discuss why we use the Gromov--Hausdorff--Prokhorov distance 
for analyzing the stability of the degree-Rips and kernel linkage hierarchical clusterings. 
Because these constructions are density-sensitive, 
they are not continuous with respect to the Gromov--Hausdorff distance, 
unlike single-linkage \citep{carlsson-memoli-hierarchical-clustering}. 
They are also not continuous with respect to the Gromov--Prokhorov distance. 
This was observed for the simplicial degree-Rips bifiltration 
by \citet[Remark 3.8]{blumberg-lesnick-2param-published}, 
using the \emph{homotopy interleaving distance} on simplicial bifiltrations. 
The same example shows that the degree-Rips hierarchical clustering 
is not continuous with respect to the Gromov--Prokhorov distance 
on finite metric spaces (equipped with the uniform measure) 
and the correspondence-interleaving distance. 
However, as we have shown, if one uses the Gromov--Hausdorff--Prokhorov distance, 
degree-Rips is continuous, and even Lipschitz.

We note that \citet[Theorem 1.7]{blumberg-lesnick-2param-published} 
prove a Gromov--Prokhorov stability result for the simplicial degree-Rips bifiltration using homotopy interleavings. 
Necessarily, the conclusion is weaker than continuity. 
This stability result is complementary to our results. 
By working with the Gromov--Prokhorov distance, they make weaker assumptions on the input, 
and get correspondingly weaker conclusions.
\end{remark}

\subsection{Stability of Slices of Kernel Linkage} \label{stability-ell-linkage}

Interleavings between multiparameter hierarchical clusterings restrict to interleavings between slices, 
provided the slice does not fix any parameters. 
This is analogous to the behavior of interleavings and slices of multiparameter persistence modules; 
see the Related Work section of the Introduction for references.

Because the curves $\lambda$ that we use for Persistable (\cref{lambda-notation}) 
allow all parameters of kernel linkage to vary, 
we get Gromov--Hausdorff--Prokhorov stability for $\linkage{\lambda}$ 
as an immediate corollary of \cref{continuity-of-mml}.



\begin{corollary} \label{stability-lambda-linkage-in-X}
Let $\lambda = \lambda^{x,y}$ for $x,y > 0$, and let $\sigma$ be the slope of $\lambda$. 
Then, with respect to the Gromov--Hausdorff--Prokhorov distance on compact metric probability spaces 
and the correspondence-interleaving distance:
\begin{enumerate}
	\item $\theoremlinkage{\lambda\con}$ is $\max(2 \vert \sigma \vert, 1)$-Lipschitz,
	\item $\theoremlinkage{\lambda\cov}$ is $\max(\vert 1 / \sigma \vert, 2)$-Lipschitz.
\end{enumerate}
\end{corollary}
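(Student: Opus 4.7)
The plan is to combine \cref{continuity-of-mml} with a slicing calculation analogous to the proof of \cref{continuity-wrt-input}(1), taking care to exploit the asymmetric interleaving that the uniform-kernel case of \cref{continuity-of-mml} affords. A naive application of \cref{continuity-wrt-input}(1) to a symmetric $(2\delta,2\delta,2\delta;2\delta)$-interleaving would yield only the weaker constants $\max(2|\mu|,2)$ and $\max(2,2/|\mu|)$. To recover the sharper constants in the statement, I instead extract from the proof of \cref{continuity-of-mml} the fact that, with the uniform kernel, $\mml(X)$ and $\mml(Y)$ are $(2\delta,2\delta,\delta;\delta)$-measured-interleaved with respect to the closest-point correspondence $R$, where $\delta = \dGHP(X,Y)$.

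For part (1), I parametrize $\lambda$ by its $k$-coordinate as in \cref{lambda-notation}, so $\lambda(r) = (r/\mu + x, r/\mu + x, r)$ with $\mu = -y/x < 0$, and look for the smallest $\epsilon^* > 0$ such that $\lambda(r-\epsilon^*)$ dominates $\lambda(r) + \vec v(2\delta,2\delta,\delta)$ in $\bbRs^{\vec v}$ for $\vec v = (1,1,-1)$. This reduces to $\epsilon^* \geq 2|\mu|\delta$ from each spatial coordinate and $\epsilon^* \geq \delta$ from the density coordinate; the optimal choice is $\epsilon^* = \max(2|\mu|,1)\delta$. A direct adaptation of the proof of \cref{continuity-wrt-input}(1), with the uniform shift $\epsilon/c$ replaced by this $\epsilon^*$, then produces an $(\epsilon^*;\epsilon^*)$-measured-interleaving between $\linkage{\lambda}(X)$ and $\linkage{\lambda}(Y)$ with respect to $R$ (the measure error $\delta$ is absorbed since $\delta \leq \epsilon^*$), which is the claimed Lipschitz bound.

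Part (2) proceeds identically but with $\lambda_{\mathrm{cov}}(r) = (r, r, \mu r + y)$ and the covariant slicing convention, so the target inequality becomes $\lambda_{\mathrm{cov}}(r+\epsilon^*) \geq \lambda_{\mathrm{cov}}(r) + \vec v(2\delta,2\delta,\delta)$. The spatial coordinates force $\epsilon^* \geq 2\delta$ and the density coordinate $\mu(r+\epsilon^*) + y \leq \mu r + y - \delta$ forces $\epsilon^* \geq \delta/|\mu|$, so $\epsilon^* = \max(2, 1/|\mu|)\delta$ suffices.

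The only mildly delicate point is verifying that the slicing argument of \cref{continuity-wrt-input}(1) remains valid when the input interleaving has different magnitudes in its spatial and density coordinates. Since that argument is purely poset-theoretic and separates coordinate-wise, this is a routine rewriting and I do not anticipate any real obstacle; the substance of the corollary is the elementary geometry of the shift along a line of slope $\mu$.
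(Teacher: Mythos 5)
Your proof is correct and follows the route the paper intends (the corollary is stated without an explicit proof): combine the asymmetric $(2\delta,2\delta,\delta;\delta)$-measured-interleaving that the proof of \cref{continuity-of-mml} produces for the uniform kernel with the coordinate-wise slicing argument of \cref{continuity-wrt-input}(1). Your observation that a black-box symmetric application of \cref{continuity-wrt-input}(1) would only give $\max(2|\mu|,2)$ and $\max(2/|\mu|,2)$ is exactly right, and your computation of the optimal shift $\epsilon^*$ along $\lambda$ and $\lambda_{\mathrm{cov}}$ reproduces the stated constants.
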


\begin{proof}
If $\MPS$ and $\MPSalt$ are compact metric probability spaces 
and $\delta > \dGHP(\MPS,\MPSalt)$, then the proof of \cref{continuity-of-mml} 
shows that $\mml(\MPS)$ and $\mml(\MPSalt)$ are $(2\delta, 2\delta, \delta)$-interleaved 
with respect to the closest-point correspondence. 
Restricting this interleaving to the line $\lambda$, 
as in e.g. \citet[Lemma 1]{landi-interleavings}, 
we get the required interleavings.
\end{proof}

Based on this result, 
we say that $\linkage{\lambda\con}$ and $\linkage{\lambda\cov}$ 
are \emph{stable with respect to the 
Gromov--Hausdorff--Prokhorov distance}. 
The slices $\linkage{\lambda\con}$ and $\linkage{\lambda\cov}$ 
are also stable in the choice of $\lambda$:



\begin{proposition} \label{stability-lambda-linkage-in-lambda}
Let $\MPS$ be a metric probability space. 
Let $\lambda = \lambda^{x,y}$ with slope $\sigma = -y / x$ be defined by intercepts $x,y > 0$, 
and let $\lambda' = \lambda^{x',y'}$ with slope $\sigma' = -y' / x'$ be defined by intercepts $x',y' > 0$.
\vspace{5pt}
\begin{enumerate}
	\item 
	$\dWI\left( \theoremlinkage{\lambda\con}(\MPS), 
	\theoremlinkage{\lambda'\con}(\MPS) \right) \leq 
	\max \left( |y - y'|, \; |x - x'| \cdot \min(|\sigma|, |\sigma'|) \right)$.
	\vspace{5pt}
	\item 
	$\dWI\left( \theoremlinkage{\lambda\cov}(\MPS), 
	\theoremlinkage{\lambda'\cov}(\MPS) \right) \leq 
	\max \left( |x - x'|, \; |y - y'| \cdot \min(|1/\sigma|, |1/\sigma'|)  \right)$.
\end{enumerate}
\end{proposition}

\begin{proof}
One can construct the required interleavings as in e.g. \citet[Lemma 2]{landi-interleavings}.
\end{proof}

\subsection{Instability of Related Methods} \label{instability-related-methods}

In the introduction, we discussed two well-known methods for density-based clustering, 
which can be recovered by taking slices of kernel linkage; 
these are robust single-linkage (\cref{robust-single-linkage}) 
and the plug-in algorithm (\cref{vertical-slices}). 
In contrast to the hierarchical clusterings $\linkage{\lambda}$ we use for Persistable, 
we now show that these methods are discontinuous with respect to the 
Gromov--Hausdorff--Prokhorov distance.

We begin with robust single-linkage. 
If one fixes the robust single-linkage parameters~$\kappa \in \bbN$ and $\alpha > 0$, 
then one can think of robust single-linkage $\RL_{\kappa, \alpha}$ 
as a function that takes a finite metric space as input 
and produces a one-parameter hierarchical clustering as output, 
and this function is discontinuous:

\begin{proposition} \label{RSL-is-GHP-discontinuous}
Let $\kappa \geq 2$ and $\alpha > 0$. 
With respect to the Gromov--Hausdorff--Prokhorov distance 
and the correspondence-interleaving distance, 
$\RL_{\kappa, \alpha}$ is discontinuous.
\end{proposition}

We prove this by giving a simple example in \cref{appendix-stability}. 
One could also formalize robust single-linkage differently, 
taking the density threshold parameter to be a ratio $k \in (0,1)$, 
and then letting $\RL_{k, \alpha}(\Met) = \mml(\Met)^{\gamma}$ 
for the covariant curve 
$\gamma \colon (0, \infty) \to \bbRs^{\times 3}$ with 
$\gamma(r) = (r, \alpha r, k)$. 
We show in \cref{appendix-stability} 
that this variant is also discontinuous 
with respect to the Gromov--Hausdorff--Prokhorov distance.

In contrast to the stability of $\linkage{\lambda}$ in $\lambda$ (\cref{stability-lambda-linkage-in-lambda}), 
changing the density threshold parameter $\kappa$ 
of robust single-linkage can lead to arbitrarily large changes 
in the output:

\begin{proposition} \label{RSL-is-not-Lipschitz-in-kappa}
Let $\kappa, \kappa' \in \bbN$ with $\kappa \neq \kappa'$, 
and let $\alpha > 0$. 
For any $D > 0$, there is a finite metric space $\Met$ such that 
$\dWI(\RL_{\kappa, \alpha}(\Met), \RL_{\kappa', \alpha}(\Met)) > D$. 
\end{proposition}

There is an analogous result for the variant of robust single-linkage 
that takes a density threshold $k \in (0,1)$ instead of $\kappa$. 
See \cref{appendix-stability}. 

We now consider the plug-in algorithm. 
As before, if one fixes the parameters $s,t > 0$, 
then $\RG_{s,t}$ is a function that takes a finite metric space as input 
and produces a one-parameter hierarchical clustering as output, 
and we have the following:

\begin{proposition} \label{RG-is-GHP-discontinuous}
Let $s,t > 0$, and let $\RG$ be defined using any kernel. 
With respect to the Gromov--Hausdorff--Prokhorov distance 
and the correspondence-interleaving distance, 
$\RG_{s,t}$ is discontinuous.
\end{proposition}

We prove this in \cref{appendix-stability} by giving a simple example. 
Finally, we consider the instability of the plug-in algorithm in its parameters. 
For a fixed metric probability space $\MPS$, \cref{stability-lambda-linkage-in-lambda} 
implies that (in both the covariant and contravariant versions) 
$\linkage{\lambda}(\MPS)$ is continuous as a function from 
its parameter space $\{\lambda^{x,y}\}_{x,y>0}$ 
to the space of one-parameter hierarchical clusterings endowed with the correspondence-interleaving distance. 
Similarly, if we fix a finite metric space $\Met$, then the plug-in algorithm 
can be seen as a function $\RG_{-,-}(\Met)$ that takes input $s,t > 0$ 
and produces a one-parameter hierarchical clustering as output. 
However, this is not continuous (see \cref{appendix-stability} for the proof):

\begin{proposition} \label{RG-is-discontinuous-in-t}
Let $\RG$ be defined using any kernel, and let $\Met$ be any finite metric space with $|\Met| \geq 2$. 
Then $\RG_{-,-}(\Met)$ is discontinuous, 
with respect to the Euclidean distance on $\bbRs^{2}$ 
and the correspondence-interleaving distance.
\end{proposition}

\subsection{Approximation of $\lambda$-linkage by Subsampling} \label{subsampling-approximation}

Because degree-Rips and $\lambda$-linkage are Gromov--Hausdorff--Prokhorov stable, 
they admit a very simple approximation algorithm. 
For example, say $\lambda = \lambda\con$, 
and we want to compute $\linkage{\lambda}(\Met)$, 
where $\Met$ is a finite metric space, equipped with the uniform measure. 
Say $\Metalt \subset \Met$ is a subsample, 
with $\epsilon = \dH(\Met,\Metalt)$. 
Then, by \cref{subsample-GHP}, one can compute a probability measure on $\Metalt$ 
such that $\dGHP(\Met,\Metalt) \leq \epsilon$, 
and therefore, by \cref{stability-lambda-linkage-in-X}, we have 
$\dWI(\linkage{\lambda}(\Met), \linkage{\lambda}(\Metalt)) \leq \max(2 \vert \sigma \vert, 1) \cdot \epsilon$. 

Therefore, if we can find a small subsample of $\Met$ that is close in the Hausdorff distance, 
we need only compute $\linkage{\lambda}$ of the subsample in order to approximate $\linkage{\lambda}(\Met)$. 
Persistable implements several subsampling methods, which can be used to get fast results on large data sets. 
We present an example in \cref{Persistable}.

\begin{proposition} \label{subsample-GHP}
    Let $(\MPS,\mu_\MPS)$ be a finite metric probability space.
    Let $\MPSalt \subseteq \MPS$ be a subset and let $i : \MPSalt \to \MPS$ denote the inclusion.
    Choose any function $f : \MPS \to \MPSalt$ with the property that, for every $x \in \MPS$, the point $f(x) \in \MPSalt$ is a closest point of $\MPSalt$ to $x$.
    Define a probability measure on $\MPSalt$ by
    $\mu_\MPSalt = f_*(\mu_\MPS)$.
    Then $\dP(\mu_\MPS, i_* \mu_\MPSalt) \leq \dH(\MPS,\MPSalt)$ and, in particular, $\dGHP(\MPS,\MPSalt) \leq \dH(\MPS,\MPSalt)$.
\end{proposition}
\begin{proof}
    Let $\epsilon > 0$ be such that $\MPS \subseteq \MPSalt^\epsilon$; it is enough to show that $\dP(\mu_\MPS, i_* \mu_\MPSalt) \leq \epsilon$.
    We prove that, for every $A \subseteq \MPS$ we have $\mu_\MPS(A) \leq \mu_\MPSalt(A^\epsilon)$ and $\mu_\MPSalt(A) \leq \mu_\MPS(A^\epsilon)$.

    Note that $d_\MPS(x,f(x)) \leq \epsilon$ for all $x \in \MPS$.
    It follows that $f^{-1}(A \cap \MPSalt) \subseteq A^\epsilon$ and $f(A) \subseteq A^\epsilon \cap \MPSalt$ for all $A \subseteq \MPS$.
    Note also that $i_*\mu_\MPSalt(B) = \mu_\MPS(f^{-1}(B \cap \MPSalt))$ for every $B \subseteq \MPS$, by definition of $i_*$ and $f_*$. 
    Let $A \subseteq \MPS$.
    Using the above, we get on one hand $i_*\mu_\MPSalt(A) = \mu_\MPS(f^{-1}(A\cap \MPSalt)) \leq \mu_\MPS(A^\epsilon)$.
    On the other hand, $A \subseteq f^{-1}(f(A)) \subseteq f^{-1}(A^\epsilon \cap \MPSalt)$, and thus $\mu_\MPS(A) \leq \mu_\MPS(f^{-1}(A^\epsilon \cap \MPSalt)) = i_*\mu_\MPSalt(A^\epsilon)$.
\end{proof}


\section{Consistency} \label{Consistency}

There is a natural notion of consistency for hierarchical clustering algorithms 
associated to the correspondence-interleaving distance. 
In this section, we define this, show that it implies Hartigan consistency, 
and show that $\lambda$-linkage is consistent with respect to 
the correspondence-interleaving distance.

In this section, unless otherwise stated, 
a hierarchical clustering will be a 
one-parameter hierarchical clustering (\cref{n-param-HC}).

\subsection{Notions of Consistency of Hierarchical Clustering Algorithms}

\begin{definition} \label{definition-hierarchical-clustering-algorithm}
A \define{hierarchical clustering algorithm} $\mathbb{A}$ 
with parameter space $\Theta$ is a mapping 
that assigns to each finite metric space $\Met$ 
and each parameter $\theta \in \Theta$ 
a hierarchical clustering $\mathbb{A}^\theta(\Met)$ of $\Met$.
\end{definition}

We now define the notion of consistency associated to the correspondence-interleaving distance, 
using the density-contour hierarchical clustering $H(f)$ (\cref{density-contour}) 
and the closest point correspondence $R_c$ (\cref{closest-points-correspondence}). 

\begin{definition} \label{CI-and-MI-consistent}
    Let $f : \bbR^d \to \bbR$ be a probability density function with support $\support(f)$. 
    A hierarchical clustering algorithm $\mathbb{A}$ with parameter space $\Theta$ is
    \define{CI-consistent} with respect to $f$ if for every $n \in \bbN$
    there exists a parameter $\theta_n \in \Theta$ such that, for every $\epsilon > 0$ and
    $X_n$ an i.i.d.~$n$-sample of $\support(f)$ with distribution $f$, the probability that
    $\mathbb{A}^{\theta_n}(X_n)$ and $H(f)$ are
    $\epsilon$-interleaved 
    with respect to $R_c$  
    goes to $1$ as $n$ goes to $\infty$.
\end{definition}

\begin{remark}
    In practice, one may want an explicit rule for choosing the parameters $\theta_n$ of \cref{CI-and-MI-consistent} as a function of $n$.
    Moreover, one may also want rates of convergence for the algorithm.
    Although we do not specifically address this in this paper, we mention that such results can be extracted from the proof of the consistency result \cref{consistency-gamma-linkage} together with rates of convergence of samples in the Hausdorff distance \citep{cuevas-casal} and in the Prokhorov distance \citep{dudley-69}.
\end{remark}

We now define Hartigan consistency, following \cite{hartigan-81}.

\begin{definition} \label{definition-cluster-tree}
    Let $X$ be a set. A \define{cluster tree} of $X$ is given by
    a family $\mathcal{T}$ of subsets of $X$ with the property that
    whenever $A$ and $B$ are distinct elements of $\mathcal{T}$, 
    then one of the following is true: $A \cap B = \emptyset$,
    $A \subseteq B$, or $B \subseteq A$.
    The elements of $\mathcal{T}$ are called \define{clusters}.
\end{definition}

\begin{example} \label{associated-cluster-tree}
Let $H : I \to \C(X)$ be a hierarchical clustering of a set $X$.
We can define an associated cluster tree $\calF H = \{C \in H(r) : r \in I\}$.
\end{example}

\begin{definition}
    A \define{cluster tree algorithm} $\mathbb{A}$ with parameter space $\Theta$ is a mapping that assigns to
    each finite metric space $\Met$ and each parameter $\theta \in \Theta$ a cluster tree $\mathbb{A}^\theta(\Met)$
    of $\Met$.
\end{definition}

\begin{definition}[cf.~{\citealp{hartigan-81}}]
    \label{definition-hartigan-consistency}
    Let $f : \bbR^d \to \bbR$ be a probability density function with support $\support(f)$. 
    A cluster tree algorithm $\mathbb{A}$ with parameter space $\Theta$ is \define{Hartigan consistent}
    with respect to $f$ if for every $n \in \bbN$
    there exists a parameter $\theta_n \in \Theta$ such that,
    given $A$ and $A'$ distinct elements of $H(f)(r)$ for some $r > 0$,
    and $X_n$ an i.i.d.~$n$-sample of $\support(f)$ with distribution $f$ we have
    \[  
       P(A_n \cap A'_n = \emptyset) \xrightarrow{n \to \infty} 1,
    \]
    where $A_n$ is the smallest cluster in $\mathbb{A}^{\theta_n}(X_n)$ that contains
    $A \cap X_n$, and $A'_n$ is the smallest cluster in $\mathbb{A}^{\theta_n}(X_n)$ that
    contains $A' \cap X_n$.
\end{definition}

The proof of the following result is in \cref{appendix}.

\begin{proposition} \label{CI-implies-Hartigan}
    Let $f : \bbR^d \to \bbR$ be a continuous and compactly supported probability density function.
    If a hierarchical clustering algorithm $\mathbb{A}$ is CI-consistent with respect to $f$,
    then the associated cluster tree algorithm $\calF\mathbb{A}$ is Hartigan consistent with respect to $f$.
\end{proposition}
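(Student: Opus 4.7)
The plan is to reduce the conclusion to a topological stability fact about sublevel sets of $f$, and then use both directions of the $\epsilon$-interleaving supplied by CI-consistency to sandwich the clusters $A_n$ and $A'_n$ into disjoint pieces.

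The key topological step I would prove first is that there exists $\delta_1 \in (0,r)$ such that $A$ and $A'$ lie in distinct connected components of $\{x : f(x) \geq r - \delta_1\}$. Suppose not; then along some sequence $\delta_n \downarrow 0$, the connected component $A_{\delta_n}$ of $\{f \geq r - \delta_n\}$ containing $A$ must also meet $A'$. Picking $x_n \in A_{\delta_n} \cap A'$ and using compactness of $A'$ (ensured by compact support of $f$), I extract a subsequence with $x_n \to x^* \in A'$. Since the $A_{\delta_n}$ form a nested family of closed sets, $x^* \in \bigcap_n A_{\delta_n}$. This intersection is a compact connected subset of $\bigcap_n \{f \geq r - \delta_n\} = \{f \geq r\}$ (using continuity of $f$) which contains $A$, so by maximality of $A$ as a connected component it equals $A$; hence $x^* \in A \cap A' = \emptyset$, a contradiction.

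With this in hand, I would fix $\epsilon \in (0, \delta_1/2)$ and invoke CI-consistency: there are parameters $\theta_n$ such that, with probability tending to $1$, $\mathbb{A}^{\theta_n}(X_n)$ and $H(f)$ are $\epsilon$-interleaved with respect to the closest point correspondence $R_c \subseteq X_n \times X$ (note that each diagonal pair $(x_n, x_n)$ lies in $R_c$ because $X_n \subseteq X$). Pulling back the interleaving relation $H(f)(r) \preceq \mathbb{A}^{\theta_n}(X_n)(r - \epsilon)$ along $\pi_X, \pi_{X_n}$, I extract clusters $\tilde C, \tilde C' \in \mathbb{A}^{\theta_n}(X_n)(r - \epsilon)$ with $A \cap X_n \subseteq \tilde C$ and $A' \cap X_n \subseteq \tilde C'$; both intersections are non-empty with probability $\to 1$, since components of $\{f \geq r\}$ with $r > 0$ have non-empty interior, hence positive $\mu_f$-measure. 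The reverse direction $\mathbb{A}^{\theta_n}(X_n)(r - \epsilon) \preceq H(f)(r - 2\epsilon)$ then gives components $D, D' \in H(f)(r - 2\epsilon)$ with $\tilde C \subseteq D$ and $\tilde C' \subseteq D'$; combined with $A \cap X_n \subseteq \tilde C \subseteq D$ and $A \subseteq \{f \geq r - 2\epsilon\}$, the connectedness of $A$ forces $A \subseteq D$, and similarly $A' \subseteq D'$.

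Because $2\epsilon < \delta_1$, the topological step yields $D \neq D'$, so $D \cap D' = \emptyset$, and therefore $\tilde C \cap \tilde C' = \emptyset$. To finish: since $A_n$ is the smallest cluster of $\calF \mathbb{A}^{\theta_n}(X_n)$ containing $A \cap X_n$ and $\tilde C$ is itself such a cluster, $A_n \subseteq \tilde C$; analogously $A'_n \subseteq \tilde C'$, and thus $A_n \cap A'_n = \emptyset$. The main obstacle will be the topological stability claim in the first step: it is where both continuity and compact support of $f$ are essential, and some care is required to handle the edge case where $\mu_f(A)$ or $\mu_f(A')$ is zero (which can be excluded by a mild genericity assumption or handled as a vacuous case in the definition of $A_n$).
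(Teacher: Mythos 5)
Your argument is correct and follows essentially the same route as the paper: the paper isolates your topological step as a separate lemma (stated without proof, citing its similarity to a lemma of Chaudhuri--Dasgupta) producing distinct components $B \neq B'$ of $H(f)(r-\epsilon)$ containing $A$ and $A'$, and then uses both directions of the $\epsilon/2$-interleaving, restricted to the diagonal pairs of $R_c$, to sandwich $A_n$ and $A'_n$ into $B$ and $B'$ exactly as you do with $D$ and $D'$; your supplied proof of the topological step via nested compact connected sets is sound. One caveat: your claim that every component of $\{f \geq r\}$ has non-empty interior (hence positive $\mu_f$-measure) is false --- a strict local maximum of $f$ at height exactly $r$ yields a singleton component --- so non-emptiness of $A \cap X_n$ cannot be guaranteed this way; but this only affects the degenerate case where $A_n$ would be ``the smallest cluster containing the empty set,'' which the paper's proof also leaves implicit and which you correctly flag as requiring a convention.
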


\subsection{Consistency of $\lambda$-linkage} \label{DR-and-f}

Let $f : \bbR^d \to \bbR$ be a continuous and compactly supported 
probability density function with support $\support(f)$, 
and let $\mu_f$ be the probability measure defined by $f$. 
We now prove that the hierarchical clustering algorithm 
$\linkage{\lambda}$ is CI-consistent with respect to $f$. 
The strategy is to construct an interleaving 
between $H(f)$ and the $\linkage{\lambda}$ of 
the metric probability space $(\support(f), \mu_f)$. 
Then, the stability of $\linkage{\lambda}$ implies that, 
for a sufficiently good sample $X_n$ of $f$, 
the $\linkage{\lambda}$ of $X_n$ is a good approximation of 
the $\linkage{\lambda}$ of $(\support(f), \mu_f)$. 

However, in order to interleave 
$\linkage{\lambda}$ and $H(f)$, 
we must first reparameterize $\linkage{\lambda}$, 
as discussed in \cref{reparametrize-for-consistency}.


\begin{definition} \label{phi-and-rescaling-lambda}
Let $\lambda = \lambda\con^{x,y}$ for $x,y > 0$ (see \cref{lambda-notation}). 
For $s > 0$, we write $v_s$ for the volume of a ball in $\bbR^d$ of radius $s$. 
Define an order-preserving function $\varphi : (0,y)^{\op} \to \bbRs^{\op}$ 
by $\varphi(r) = \frac{r}{v_{\lambda_s(r)}}$. 
Note that $\varphi$ is a bijection; 
we write $\overline{\lambda} = \lambda \circ \varphi^{-1}$. 
For any metric probability space $\MPS$, 
we write $\linkage{\overline{\lambda}}(\MPS) = \mml^{K}(\MPS)^{\overline{\lambda}}$, 
with $K$ the uniform kernel.
\end{definition}



\begin{theorem} \label{consistency-lambda-linkage}
The hierarchical clustering algorithm $\theoremlinkage{\overline{\lambda}}$ 
with parameter space $\{\lambda\con^{x,y}\}_{x,y > 0}$ is CI-consistent with respect to 
any continuous, compactly supported probability density function $f : \bbR^d \to \bbR$.
\end{theorem}

This is a special case of \cref{consistency-gamma-linkage}, 
which is proved in \cref{appendix}.

\begin{remark}
For any $\lambda \in \{\lambda\con^{x,y}\}_{x,y > 0}$, 
$\linkage{\lambda}$ and $\linkage{\overline{\lambda}}$ 
produce the same underlying cluster tree. 
So, it follows from the preceding theorem that 
the algorithm $\linkage{\lambda}$ 
with parameter space $\{\lambda\con^{x,y}\}_{x,y > 0}$ is Hartigan consistent with respect to 
any continuous, compactly supported probability density function $f : \bbR^d \to \bbR$.
\end{remark}

\section{Structure of One-Parameter Hierarchical Clusterings} \label{Structure-one-parameter}

Barcodes are used in topological data analysis 
to summarize structural information about data 
\citep{edelsbrunner-letscher-zomorodian, carlsson-barcodes-shapes, ghrist-barcodes}. 
Since they were first introduced, 
a rich theory has been developed 
for barcodes 
(see e.g., \citealt{chazal-silva-glisse-oudot}). 
Barcodes can be defined in many different contexts, 
and are used to summarize various geometric and topological properties of different kinds of data. 
In particular, one-parameter hierarchical clusterings have barcodes, 
and these are a key ingredient in the Persistable pipeline. 
See \cref{figure:barcode} for an example of a barcode.

\begin{figure}[btp]
    \centering
    \includegraphics[width=0.6\textwidth]{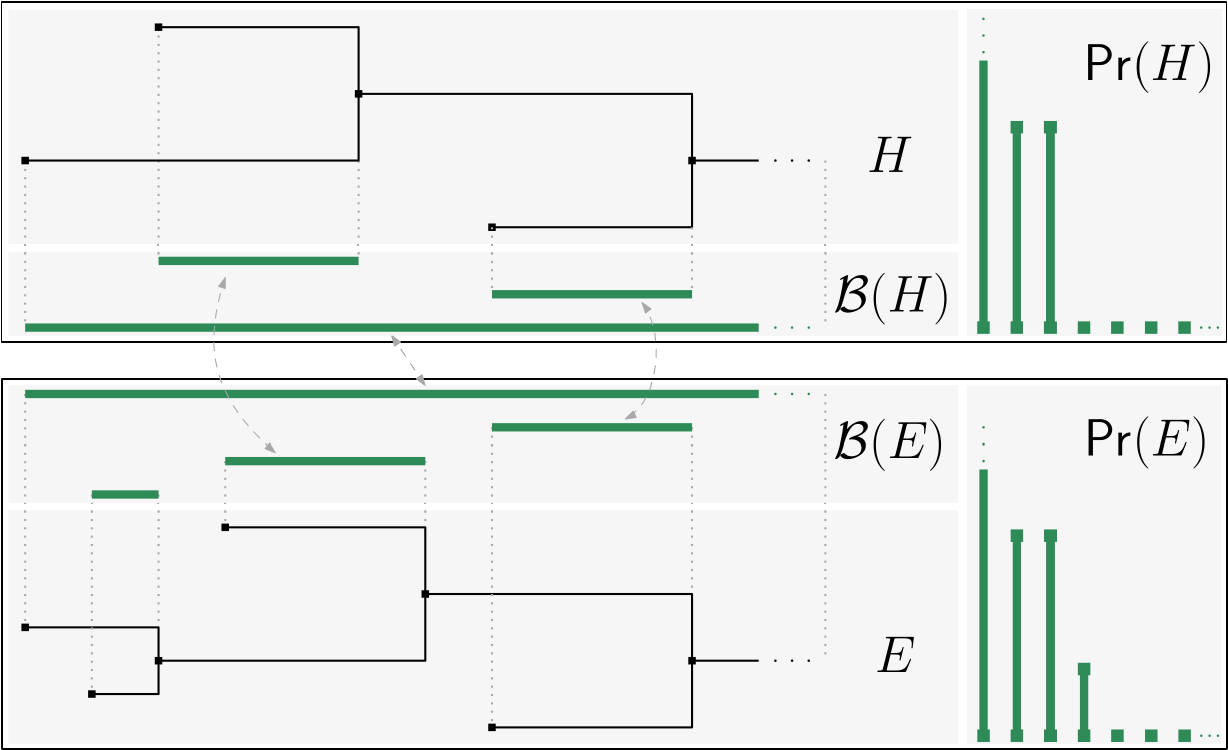}
    \caption{The \textbf{barcode} $\barc(H)$ of a hierarchical clustering $H$ 
    is a collection of real intervals, called \emph{bars} (displayed in green). 
    Informally, the barcode is constructed using the following two rules: 
    (1) If a new cluster enters $H$ at parameter $r$, start a new bar with left endpoint~$r$. 
    (2) If two clusters merge at $r$, take the cluster that entered the hierarchy later 
    (i.e. at a larger parameter value), and end its bar at~$r$. 
    The second rule is called the \emph{elder rule}, since the elder bar survives. 
    In the case of HCs induced by filtered graphs, 
    we give pseudocode for this procedure 
    (\cref{barcode-algorithm}). 
    A \textbf{matching} is shown between the barcodes of $H$ and $E$. 
    The \textbf{prominence diagram} $\prom(H)$ is simply 
    the data of the lengths of the bars in $\barc(H)$ 
    (\cref{subsection:prominence-diagram}).}
    \label{figure:barcode}
\end{figure}

Barcodes of hierarchical clusterings and related structures are a standard topic in topological data analysis 
(see e.g.~\citealt{curry}; 
\citealt{cai-kim-memoli-wang}). 
An important point in practice is that the so-called ``elder rule'' can be used to efficiently compute the barcode 
\citep[Ch. VII.1]{edelsbrunner-harer}. 
In the setting of one-parameter hierarchical clusterings, 
it is possible to define the barcode and describe an algorithm for computing it 
without using any topological or algebraic machinery. 
So, for the benefit of readers who are not already familiar with topological data analysis, 
in this section we provide a definition of the barcode and describe some of its basic properties. 
Some readers may wish to skim this section on a first reading of the paper, 
and refer to it as needed when encountering barcodes.

\subsection{The Poset of Persistent Clusters} 
\label{poset-of-persistent-clusters-def}


We now describe a fundamental object associated to a hierarchical clustering, which we call the poset of persistent clusters.
Picturing a hierarchical clustering as a dendrogram, 
the basic idea is to identify the edges in the dendrogram
and define a partial order on them (see \cref{fig:basic-defs-pic} for an illustration). 
To the best of our knowledge, the poset of persistent clusters was first defined by 
\citet[Appendix A]{statistical-inference-cluster-trees}, 
in the setting of the density-contour hierarchical clustering 
(though they did not use the terminology ``persistent cluster''). 
This construction was also considered by \citet[Section 2.3]{mcinnes-healy} 
in the setting of robust single-linkage (\cref{robust-single-linkage}), 
although phrased in the language of sheaf theory. 
\citet[Section 1]{jardine-stable-components} 
defines an extension of this construction to 2-parameter hierarchical clusterings.
We first define the notion of \emph{persistent cluster}.

\begin{figure}[btp]
    \centering
    \includegraphics[width=0.6\textwidth]{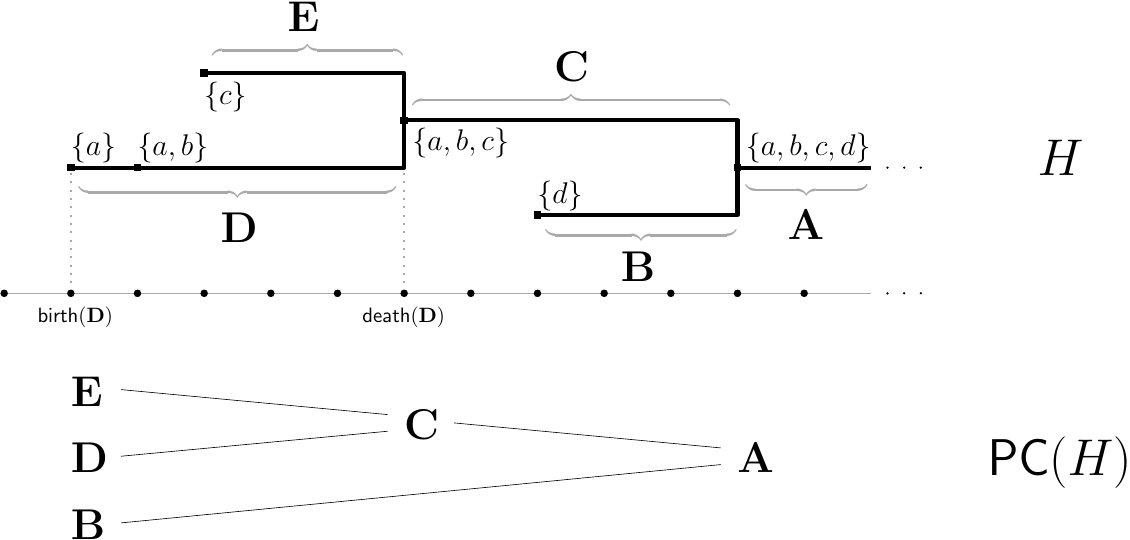}
    \caption{For the hierarchical clustering $H$, the poset of persistent clusters $\PC(H)$ 
    is the poset with elements $\mathbf{A}, \mathbf{B}, \mathbf{C}, \mathbf{D}, \mathbf{E}$, 
    where $\mathbf{E}, \mathbf{D} < \mathbf{C}$, etc. 
    The leaves of $H$ are $\mathbf{E}, \mathbf{D}$, and $\mathbf{B}$. 
    The parameter values $\birth(\mathbf{D})$ and $\death(\mathbf{D})$ are marked; 
    the underlying set of $\mathbf{D}$ is $U(\mathbf{D}) = \{a,b\}$.}
    \label{fig:basic-defs-pic}
\end{figure}

\begin{definition}\label{persistentclusterdef}
Let $X$ be a set.
A \define{persistent cluster} $\bfC$ of $X$ consists of an interval
$\life(\bfC) \subseteq \bbR$
together with an order-preserving function $\bfC : \life(\bfC) \to \parts(X)$, 
where $\parts(X)$ is the power set of $X$ ordered by inclusion. 
The \define{underlying set} of the persistent cluster $\bfC$ is 
$U(\bfC) = \cup_{r \in \life(\bfC)} \bfC(r)$, 
and two persistent clusters are \define{disjoint} if their underlying sets are disjoint. 
Let $\birth(\bfC) = \inf\life(\bfC)$, $\death(\bfC) = \sup\life(\bfC)$, 
and $\length(\bfC) = \death(\bfC) - \birth(\bfC)$.
\end{definition}

We remark that persistent clusters are, in particular, one-parameter hierarchical clusterings, so one can consider interleavings between persistent clusters.

\begin{definition} \label{PC-def}
Let $I \subseteq \bbR$ be an interval and let $H$ be an $I$-hierarchical clustering. 
The \define{poset of persistent clusters} of $H$, denoted $\PC(H)$, 
is the poset whose underlying set is the quotient set $\left( \coprod_{r \in I} H(r) \right) / \sim$ where:
\begin{itemize}
    \item The set $\coprod_{r \in I} H(r)$ denotes the disjoint union of all clusterings as $r$ varies in $I$.
    \item The relation $\sim$ is the symmetric closure of the following relation. 
        For $r_1 \leq r_2$, $C_1 \in H(r_1)$, and $C_2 \in H(r_2)$, we have that $C_1$ and $C_2$ are related if and only if $C_1 \subseteq C_2$ and, for every $r_3 \in [r_1,r_2]$, there is exactly one cluster $C_3 \in H(r_3)$ such that $C_3 \subseteq C_2$. 
\end{itemize}
Let $\bfC \in \PC(H)$.
The equivalence class $\bfC$ is naturally a persistent cluster in the sense of \cref{persistentclusterdef}, with $\life(\bfC) = \{ r \in I : \exists C \in H(r) \; \text{with} \; \bfC = [C] \}$ and such that, for $r \in \life(\bfC)$, we let $\bfC(r) = C$, with $C \in H(r)$ the only cluster in $H(r)$ such that $[C] = \bfC$. 
With this in mind, we define the partial order on $\PC(H)$ by letting $\bfC \leq \bfD$ if $U(\bfC) \subseteq U(\bfD)$.
\end{definition}

The second poset axiom (\cref{poset-def}) for $\PC(H)$ is established in 
\cref{lemma:underlying-clusters-are-disjoint}. 
The other poset axioms follow immediately from the definition.


\begin{definition} \label{leaves-def}
Let $H$ be a one-parameter hierarchical clustering.
The set of \define{leaves} of $H$, denoted $\leaves(H)$, is the set of minimal elements of $\PC(H)$.
\end{definition}

See \cref{fig:basic-defs-pic} for an illustration of the poset of persistent clusters and of the leaves of a hierarchical clustering.

\subsection{Tameness Conditions}
We now define several tameness conditions that one can impose on hierarchical clusterings 
in order to get a notion of a barcode. 
The barcode is most naturally defined for \emph{pointwise finite} HCs. 
However, some HCs of interest may not be pointwise finite 
(see \cref{examples-essentially-finite}). 
So, we introduce a notion of \emph{essentially finite} HCs. 
While essentially finite HCs may not have barcodes, 
they at least have \emph{prominence diagrams}, 
a closely related notion (see \cref{subsection:prominence-diagram}). 

We begin by introducing the \emph{persistence-based pruning} of an HC; 
see \cref{fig:pers-pruning-pic}. 
This pruning procedure is similar in spirit to the pruning of 
\citet[Section 4.2]{statistical-inference-cluster-trees}:
the persistence-based pruning shortens all branches by a chosen amount, making some of them disappear, while the pruning of \citet{statistical-inference-cluster-trees} removes all branches shorter than the chosen amount, and leaves the rest of the branches intact.
In particular, the persistence-based pruning is stable with respect to interleavings (\cref{stability-persistence-pruning}), while the pruning of \citet{statistical-inference-cluster-trees} is not. 
Let $I \subseteq \bbR$ be an interval and let $H : I \to \C(\SET)$ be a one-parameter hierarchical clustering of a set $\SET$.
For $r \leq r' \in I$ we write $H(r \leq r') : H(r) \to H(r')$ for the function 
that takes $C \in H(r)$ to the unique $D \in H(r')$ such that $C \subseteq D$. 

\begin{definition} \label{persistence-pruning-def}
    Let $H$ be an $\bbR$-hierarchical clustering of a set $\SET$.
    Let $\tau \geq 0$.
    The \define{per\-sis\-tence-based pruning} of $H$ with respect to the threshold $\tau$ is the $\bbR$-hierarchical clustering $H_{\geq \tau}$ of $\SET$ such that, for all $r \in I$, we let
\[
	H_{\geq \tau}(r)
    \; := \; \Im\, H(r-\tau \leq r) 
    \; = \; \{ C \in H(r) : \exists D \in H(r-\tau) \text{ with } D \subseteq C \}.
\]
\end{definition}

\begin{figure}
    \centering
    \includegraphics[width=0.6\textwidth]{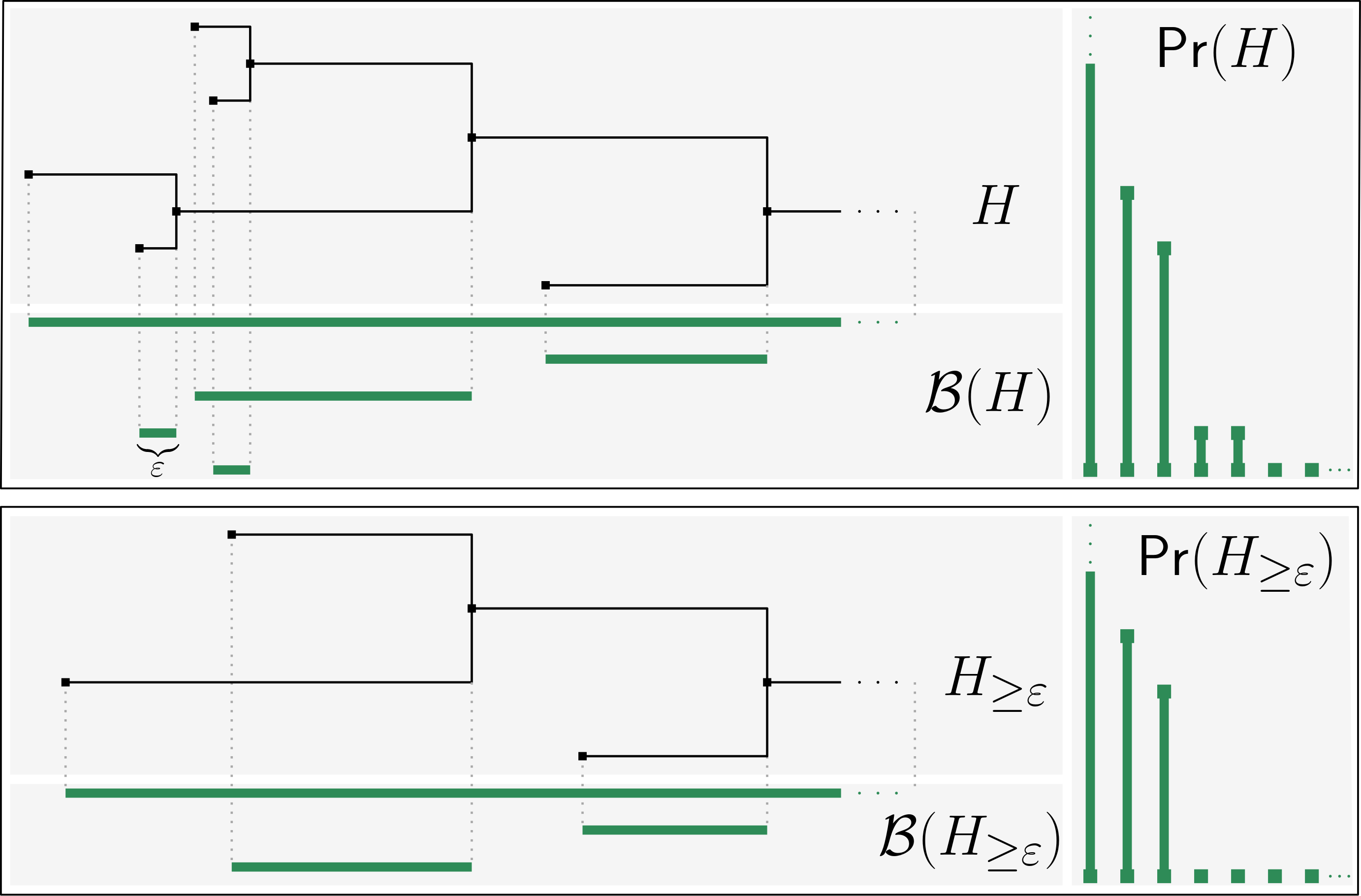}
    \caption{A hierarchical clustering $H$ and the persistence-based pruning $H_{\geq \epsilon}$. 
    The barcode of $H$ contains two short bars, reflecting short leaves of $H$. 
    As the third-longest bar of $H$ is much longer than the fourth-longest, $\gapsize_3(H)$ is large. 
    Pruning $H$ by an appropriate $\epsilon$ removes the short leaves.}
    \label{fig:pers-pruning-pic}
\end{figure}


\begin{definition} \label{finiteness}
An $\bbR$-hierarchical clustering $H$ is 
\define{finite} if $\PC(H)$ is finite;
\define{pointwise finite} if, for all $r \in \bbR$, the cardinality of $H(r)$ is finite; 
and \define{essentially finite} if $H_{\geq \tau}$ is finite for every $\tau > 0 \in \bbR$. 
A one-parameter hierarchical clustering is finite (respectively pointwise finite, essentially finite) 
if its extension (\cref{extension}) is finite (respectively pointwise finite, essentially finite).
\end{definition}

The above notion of finite hierarchical clustering was introduced by 
\citet{statistical-inference-cluster-trees}. 
For readers familiar with the theory of persistence modules, 
we now briefly explain the connection between the other two tameness conditions 
and well-known tameness conditions for persistence modules. 
Given an $\mathbb{R}$-hierarchical clustering $H$ and a choice of field $\mathbb{F}$, 
there is a persistence module $\mathbb{F}H$ generated by $H$ 
(see \cref{appendix-structure-one-parameter} for details). 
Now, an HC $H$ is pointwise finite if and only if $\mathbb{F}H$ is pointwise finite-dimensional 
\citep[Section 3.8]{chazal-silva-glisse-oudot}. 
Say that $H$ is \emph{bounded} if there is $s \leq t \in \mathbb{R}$ 
such that $H$ is constant on $(-\infty, s)$ and $(t, \infty)$. 
As we show in the proof of \cref{lemma-examples-tame}, 
assuming $H$ is bounded, $H$ is essentially finite if and only if $\mathbb{F}H$ is q-tame 
\citep[Section 3.8]{chazal-silva-glisse-oudot}.

\begin{example}
    \label{example:hierarchical-clustering-finite-set}
    Any one-parameter hierarchical clustering $H$ of a finite set $\SET$ is finite, since $|PC(H)|$ is bounded above by the number of subsets of $\SET$, by \cref{lemma:underlying-clusters-are-disjoint}.
\end{example}

We prove the claims in the following example as \cref{lemma-examples-tame}.

\begin{example} \label{examples-essentially-finite}
    For any $\lambda \in \{\lambda\con^{x,y}\}_{x,y > 0}$ and any compact metric probability space $\MPS$, the hierarchical clustering $\linkage{\lambda}(\MPS)$ is essentially finite.
    If $f : \bbR^d \to \bbR$ is continuous and compactly supported, then $H(f)$ is essentially finite.
    Note that, even if $f : \bbR^d \to \bbR$ is continuous and compactly supported, the hierarchical clustering $H(f)$ need not be pointwise finite, as the following simple example with $d=1$ shows.

    Let $h(x) = x \cdot \sin(1/x) + 1$ if $x \neq 0$ and $h(0) = 1$.
    Let $g : \bbR \to \bbR_{\geq 0}$ be continuous, such that $g|_{(-1/2,1/2)} = c > 0$, $g|_{(-\infty,-1) \cup (1,\infty)} = 0$, and such that $f(x) := h(x)g(x)$ integrates to $1$ on $\bbR$; such a function $g$ can be taken to be piecewise linear, or even smooth.
    Then $f$ is a pdf and $H(f)(c)$ has at least as many clusters as there are connected components in $\{x \in \bbR : x \cdot \sin(1/x) \geq 0, x \in (0,1/2)\}=\{x \in \bbR : \sin(1/x) \geq 0, x \in (0,1/2)\} = \{1/y : \sin(y) \geq 0, y > 2\}$, which clearly has countably many connected components.
\end{example}

\subsection{The Barcode}
\label{barcode-HC-finite-set}

We now define the barcode of a pointwise finite one-parameter HC. 
In order to give a definition that does not require any homology theory, 
we follow \citet{carlsson-zomorodian-multidimensional}, 
and define the barcode in terms of the rank invariant, 
also known as persistent Betti numbers \citep{edelsbrunner-letscher-zomorodian}. 
For zero-dimensional homology, which is the relevant context for clustering,
the rank invariant had already been introduced under the name of ``size function'' by \cite{frosini-2, frosini-1, frosini-4};
the multiparameter version of size functions is due to \citet{biasotti-cerri-frosini-giorgi-landi}.

Let $I \subseteq \bbR$ be an interval and let $H : I \to \C(\SET)$ be a one-parameter hierarchical clustering of a set $\SET$.
Given $r \leq r' \in I$, define the \define{rank invariant} of $H$ at $r \leq r'$ as
\[
    \rk(H)(r \leq r') \; := \; | \Im\, H(r\leq r')| 
    \; = \; | \{ C \in H(r') : \exists D \in H(r) \text{ with } D \subseteq C \}|.
\]
We interpret the quantity $\rk(H)(r \leq r')$ as 
the number of clusters in $H(r')$ that have lived for at least $r' - r$ time. 
Equivalently, it is 
the maximum cardinality of a set of clusters in~$H(r)$ that survive, as distinct clusters, until~$H(r')$.

The rank invariant of a pointwise finite $I$-HC is, a priori, a function mapping comparable elements of $I$ to natural numbers, and, as such, can be hard to visualize.
Nevertheless, the function can be encoded as a multiset of subintervals of $I$ in a unique way, as the following theorem asserts.
In the generality of pointwise finite HCs, this theorem follows directly from a theorem of Crawley-Boevey; see \cref{appendix-structure-one-parameter} for a proof of \cref{theorem:barcode-existence-uniqueness}, as well as for the precise definition of multiset.

\begin{theorem}
    \label{theorem:barcode-existence-uniqueness}
    Let $I \subseteq \bbR$ be an interval and let $H$ be a pointwise finite $I$-hierarchical clustering.
    There exists a unique multiset of non-empty intervals $\barc(H) = \{\barA_j \subseteq I\}_{j \in J}$ with the property that, for all $r \leq r' \in \bbR$, we have
    $\rk(H)(r \leq r') = |\{ j \in J : r,r' \in \barA_j \}|$.
\end{theorem}

\begin{definition} \label{barcode-of-HC-def}
    Let $H$ be a pointwise finite one-parameter HC.
    The \define{barcode} of $H$ is the multiset of intervals $\barc(H)$ of \cref{theorem:barcode-existence-uniqueness}.
\end{definition}

\subsubsection{Bottleneck Stability of Barcodes}

A standard way to compare barcodes is the \emph{bottleneck distance}. 
For readers unfamiliar with this notion, we give an intuitive explanation, 
and refer to \citet[Section~3.2]{bauer-lesnick-matchings} for details. 
As in \cref{barcode-of-HC-def}, a \emph{barcode} is a multiset of intervals of the real line. 
For barcodes~$\mathcal{B}$ and $\mathcal{C}$, 
a \emph{matching} between $\mathcal{B}$ and $\mathcal{C}$ 
is a bijection between a sub-multiset $\mathcal{B'}$ of $\mathcal{B}$ 
and a sub-multiset $\mathcal{C'}$ of $\mathcal{C}$. 
For $\delta \geq 0$, a $\delta$-matching is a matching such that every interval of 
$\mathcal{B}$ and $\mathcal{C}$ with length greater than $2\delta$ is matched 
(i.e., is in $\mathcal{B'}$ or $\mathcal{C'}$); 
and such that, when an interval $B \in \mathcal{B'}$ is matched with an interval $C \in \mathcal{C'}$, 
the endpoints of $B$ and $C$ differ by at most $\delta$. 
Then, the bottleneck distance between $\mathcal{B}$ and $\mathcal{C}$ is
\[
	\dB(\mathcal{B}, \mathcal{C}) = 
	\inf \{ \delta \geq 0 \; | \; 
	\exists \text{ a $\delta$-matching between $\mathcal{B}$ and $\mathcal{C}$} \}.
\]

%


We now give a proposition that relates the correspondence-interleaving distance 
between hierarchical clusterings with the bottleneck distance between their barcodes. 
This is just a translation of the well-known bottleneck stability theorem for persistence barcodes 
into the setting of hierarchical clusterings. 
The original versions of this stability result are due to
\citet{amico-frosini-landi} in the case of zero-dimensional homology,
and to \citet{cohen-steiner-edelsbrunner-harer} and \citet{chazal-etal-interleavings} in the case of homology in arbitrary dimension.
We use the explicit formulation of 
\citet[Theorem~6.4]{bauer-lesnick-matchings} 
to easily derive the following proposition 
(see \cref{appendix-structure-one-parameter} for the proof).

\begin{proposition} \label{bottleneck-correspondence-interleaving}
Let $H$ and $E$ be pointwise finite $\bbR$-hierarchical clusterings of sets $X$ and $Y$. 
Let $\epsilon \geq 0$.
If $H$ and $E$ are $\epsilon$-interleaved with respect to a correspondence between $X$ and $Y$, 
then there exists an $\epsilon$-matching between $\barc(H)$ and $\barc(E)$.
In particular, 
$\dB(\PD(H), \PD(E)) \leq \dWI(H,E)$.
\end{proposition}

The stability results for $\lambda$-linkage in \cref{stability-ell-linkage} 
give upper bounds on the cor\-re\-spon\-dence-interleaving distance between certain HCs. 
Combining these with \cref{bottleneck-correspondence-interleaving}, 
one gets upper bounds on the bottleneck distance between the barcodes of these HCs.

\subsubsection{The Barcode of a Finite Hierarchical Clustering} 

We now describe the barcode of a finite hierarchical clustering in terms of its leaves. 
Fix an interval $I \subseteq \bbR$. 
Let $H$ be a finite $I$-hierarchical clustering of a set $\SET$ and let $\bfC \in \leaves(H)$. 
Define $H \setminus \bfC$ to be the $I$-hierarchical clustering of $\SET$ with
\[
    (H\setminus \bfC)(r) = \{ D \in H(r) : [D] \neq \bfC \in \PC(H) \}.
\]


\begin{definition}
Let $H$ be a finite $I$-hierarchical clustering.
Let $\bfC, \bfD \in \leaves(H)$.
We say that $\bfD$ \define{is born earlier} than $\bfC$ if, for every $r \in \life(\bfC)$, there exists $r' \in \life(\bfD)$ such that $r' \leq r$.
A \define{minimal leaf} of $H$ is a leaf $\bfC$ such that $\length(\bfC)$ is minimal among leaves of $H$, 
and such that if $\bfD$ is another leaf of minimal length, 
then $\bfD$ is born earlier than $\bfC$.
\end{definition}

If $H$ is a finite $I$-hierarchical clustering that is not constantly empty, then it admits some minimal leaf.

\begin{proposition}
    \label{proposition:elder-rule}
    Let $H$ be a finite $I$-hierarchical clustering.
    We define a sequence of $I$-hierarchical clusterings $H_0, \dots, H_k$ ending at $k = |\leaves(H)|$.
    Define $H_0 := H$.
    Given $H_i$, let $\bfC_{i+1}$ be any minimal leaf of $H_i$ and define $H_{i+1} := H_i \setminus \bfC_{i+1}$.
    Then, this sequence of hierarchical clusterings is well-defined and $\barc(H) = \{\life(\bfC_i)\}_{1 \leq i \leq k}$.
\end{proposition}


\subsubsection{Computation of Barcodes Using the Elder Rule} 

We now describe an algorithm that uses the elder rule to compute the barcode of a hierarchical clustering induced by a finite filtered graph. 
The problem of computing the barcode of a filtered graph 
has been discussed extensively in the persistent homology literature. 
The textbook of \citet[Ch. VII.2]{edelsbrunner-harer} 
explains how the general persistence algorithm can be optimized in this case. 
\citet{curry} describes the elder rule for so-called Morse sets, 
which are an abstraction of the path components of a Morse function. 
\citet{cai-kim-memoli-wang} describe the elder rule for so-called treegrams, 
which are hierarchical clusterings with a constructibility condition. 
The textbook of \citet[Section~3.5.3]{dey-wang} also describes how the general persistence algorithm 
can be adapted to the case of a filtered graph. 

Despite the wealth of references for this topic, 
we give a description of the elder rule in our setting, 
for the convenience of readers who are not familiar with notions such as simplicial homology.

\begin{definition} \label{filtered-graph-def}
A \define{finite filtered graph} is a pair $(G,f)$, 
where $G$ is a graph on a finite set $\SET$, instantiated as a set of vertices and edges, 
i.e., $G$ is a set of subsets of $\SET$, with $\{x\} \in G$ for all $x \in \SET$, 
and there is an edge between $x$ and $y$ if and only if $\{x,y\} \in G$; 
and $f : G \to \bbR$ is a function such that if 
$\sigma_1 \subseteq \sigma_2$ in $G$, then $f(\sigma_1) \leq f(\sigma_2)$.
A finite filtered graph $(G,f)$ induces a covariant hierarchical clustering 
$H(G,f) : \bbR \to \C(\SET)$, with $H(G,f)(r)$ the set of connected components of the subgraph 
$f^{-1}((-\infty,r]) \subseteq G$.
\end{definition}

We are motivated to consider this case because the hierarchical clustering $\linkage{\lambda}(\Met)$ 
is induced by a finite filtered graph, for any finite metric space $\Met$. 
In more detail, let $\lambda = \lambda\cov^{x,y}$, and let $\sigma = -y/x$, 
as in \cref{lambda-notation}. 
For $a \in M$, let $f(a) = \inf \{ r > 0 : |B(a,r)| \geq (\sigma r + y) \cdot |M| \}$. 
For $a,b \in M$, let $f(\{a,b\}) = \min \left(x, \max \left(f(a), f(b), d_\Met(a,b)\right) \right)$. 
Let $G$ be a minimum spanning tree of the complete graph on $M$, weighted by $f$. 
Then $\linkage{\lambda}(\Met)$ is induced by $(G,f)$.



\begin{algorithm}[H]
\caption{Compute the barcode of the HC induced by a finite filtered graph}
\label{barcode-algorithm}
    \footnotesize
    \begin{algorithmic}[1]
    \Procedure{Barcode}{$G,f$}
    \State Order elements of $G$ as $[\sigma_1, \dots, \sigma_p]$ with $f(\sigma_i) \leq f(\sigma_{i+1})$ and $\sigma_i \subseteq \sigma_j \Rightarrow i \leq j$
    \State Let $\mathtt{conn\_comp} \gets \{\}$ and $\mathtt{barcode} \gets \{\}$
    \For{$1 \leq i \leq p$}
        \If{$\sigma_i = \{x\}$} \Comment{A vertex appears and a connected component is born}
            \State $\mathtt{conn\_comp} \gets \mathtt{conn\_comp} \cup \big\{(\{x\},f(\sigma_i))\big\}$
            \State $\mathtt{barcode} \gets \mathtt{barcode} \cup \left\{[f(\sigma_i), \infty)\right\}$
        \ElsIf{$\sigma_i = \{x,y\}$} \Comment{An edge appears}
            \State Let $(c,u), (d,v) \in \mathtt{conn\_comp}$ be such that $x \in c$ and $y \in d$
            \If{$c \neq d$} \Comment{Two distinct connected components are being merged}
                \State $\mathtt{conn\_comp} \gets \left(\mathtt{conn\_comp} \setminus \big\{(c,u), (d,v)\big\}\right) \cup \big\{(c\cup d,\min(u,v))\big\}$ \Comment{Merge components}
                \State $\mathtt{barcode} \gets \left(\mathtt{barcode} \setminus \big\{[u,\infty), [v,\infty)\big\}\right) \cup \big\{[\min(u,v),\infty), [\max(u,v),f(\sigma_i))\big\}$ \Comment{Elder rule}
            \EndIf
        \EndIf
    \EndFor
    \State Remove from $\mathtt{barcode}$ all intervals of the form $[t,t)$
    \State \textbf{return} $\mathtt{barcode}$
    \EndProcedure
    \end{algorithmic}
\end{algorithm}

As is well-known \citep[Ch. VII.2]{edelsbrunner-harer}, 
\cref{barcode-algorithm} can be implemented to have time complexity in $O(p \, \log p)$, where $p$ is the size of the input graph, that is, the number of vertices plus the number of edges.
To see this, note that the operations between Line 4 and Line 15 of the algorithm, and specifically the check of Line 10, can be implemented with a union-find data structure, 
also known as a disjoint-set data structure \citep{tarjan}, 
which keeps track of the connected components as the graph is filtered by~$f$.
Thus, the time complexity is dominated by that of Line 2, which sorts vertices and edges according to their $f$-value, and which has time complexity in $O(p \log p)$.


While \cref{barcode-algorithm} requires an ordering of the elements of $G$, 
the output of the algorithm does not depend on this ordering:

\begin{lemma} \label{barcode-algorithm-independent-ordering}
The output of \cref{barcode-algorithm} is independent of the ordering of the elements of $G$ 
chosen in Line 2.
\end{lemma}


\begin{proposition} \label{barcode-algorithm-correctness}
    Let $(G,f)$ be a finite filtered graph. 
    When given $(G,f)$ as input, \cref{barcode-algorithm} returns the barcode of $H(G,f)$.
\end{proposition}

The proofs of \cref{barcode-algorithm-independent-ordering} and \cref{barcode-algorithm-correctness} 
are in \cref{appendix-structure-one-parameter}.

\subsection{The Prominence Diagram} \label{subsection:prominence-diagram}
In the theory of barcodes, the lengths of the bars play an important role. 
The length of a bar is called the ``persistence'' 
(\citealt{cohen-steiner-etal-lipschitz}, \citealt[Ch. VII.1]{edelsbrunner-harer}) 
or the ``prominence'' \citep{chazal-guibas-oudot-skraba} of the bar. 
Following \citet{chazal-guibas-oudot-skraba} we adopt the term prominence, 
which avoids confusion with the notion of persistence diagram \citep[Ch. VII.1]{edelsbrunner-harer}. 
We will sort all the prominences of a barcode in descending order, 
and call the result the \emph{prominence diagram}. 
This construction was considered by \citet{bauer-munk-sieling-wardetzky} 
in the setting of mode hunting, 
where the sorted list of prominences (divided by two) was called the ``persistence signature''. 
Our main motivation for considering the prominence diagram is 
the persistence-based flattening algorithm we introduce in \cref{Pruning-and-flattening}. 
As we explain there, parameter selection for this algorithm 
involves choosing a cut-off between long and short bars in a barcode, 
and Persistable provides visualizations of prominence diagrams to guide this choice. 

The proofs of all results in this sub-section are in \cref{appendix-structure-one-parameter}.

\begin{definition} \label{prominence-diagram-def}
    A \define{prominence diagram} consists of a non-increasing function $P : \bbN \to [0,\infty]$ with $P(j) \to 0$ as $j \to \infty$.
    Define a distance $d_\infty$ between prominence diagrams by letting
    \[
        d_\infty(P,Q) = \sup_{i \in \bbN} |P(i) - Q(i)|,
    \]
    for all $P,Q : \bbN \to [0,\infty]$, with the convention that $|\infty - x| = |x - \infty|$ is equal to $\infty$ if $x \in [0,\infty)$ and to $0$ if $x = \infty$.
\end{definition}

\begin{definition} \label{gap-gapsize-def}
    Let $\gapindex \in \bbN_{\geq 1}$.
    The \define{$\gapindex^{\mathrm{th}}$ gap} of a prominence diagram $P : \bbN \to [0,\infty]$ is the (possibly empty) interval $\gap_\gapindex(P) = (P(\gapindex), P(\gapindex-1)) \subseteq [0,\infty]$.
    The \define{$\gapindex^{\mathrm{th}}$ gap size} is the length of the gap, 
    $\gapsize_\gapindex(P) = P(\gapindex-1) - P(\gapindex)$.
\end{definition}

Let $H$ be a finite $\bbR$-hierarchical clustering.
It follows from \cref{proposition:elder-rule} that the barcode $\barc(H) = \{B_j\}_{j \in J}$ of $H$ contains finitely many intervals.
Thus, $\{\length(B_j) \in [0, \infty]\}_{j \in J}$ is a finite multiset of elements of $[0,\infty]$.

\begin{definition} \label{definition:prom-finite}
    Let $H$ be a finite $\bbR$-hierarchical clustering and let $\{\ell_0 , \dots, \ell_k\} \subseteq [0,\infty]$ denote the lengths of the intervals in $\barc(H)$, with repetitions and ordered from largest to smallest.
    The \define{prominence diagram} of a finite $\bbR$-hierarchical clustering $H$ is the decreasing sequence $\prom(H) : \bbN \to [0,\infty]$ such that $\prom(H)(i) = \ell_i$ if $0 \leq i \leq k$ and $\prom(H)(i) = 0$ otherwise.
\end{definition}

It is a consequence of the stability of barcodes (\cref{bottleneck-correspondence-interleaving}) 
that the prominence diagram is stable with respect to the correspondence-interleaving distance:

\begin{lemma}
    \label{lemma:stability-prominence}
    Let $H$ and $E$ be finite $\bbR$-hierarchical clusterings.
    Then 
    \[
    		d_\infty(\prom(H), \prom(E)) \leq 2 \, \dWI(H,E).
    	\]
\end{lemma}

\begin{definition} \label{prom-H-def}
    Let $H$ be an essentially finite one-parameter hierarchical clustering of a set $\SET$ and let $\bar{H} : \bbR \to \C(\SET)$ be its extension as in \cref{extension}.
    By \cref{lemma:stability-prominence} and \cref{stability-persistence-pruning}, the prominence diagrams $\prom(\bar{H}_{\geq \tau})$ converge uniformly as $\tau \to 0$ to a prominence diagram which we denote by $\prom(H)$ and refer to as the \define{prominence diagram} of $H$.
\end{definition}

\begin{notation} \label{gap-of-H-notation}
Let $H$ be an essentially finite one-parameter hierarchical clustering. 
The $\gapindex^{\mathrm{th}}$ prominence gap of $H$ is 
$\gap_\gapindex(H) = \gap_\gapindex(\prom(H))$ 
and the $\gapindex^{\mathrm{th}}$ gap size of $H$ is 
$\gapsize_\gapindex(H) = \gapsize_\gapindex(\prom(H))$.
\end{notation}

We note that \cref{lemma:stability-prominence} 
is true also for essentially finite hierarchical clusterings:

\begin{lemma} \label{lemma:stability-prominence-essentially-finite}
Let $H$ and $E$ be essentially finite $\bbR$-hierarchical clusterings. 
Then 
\[
	d_\infty(\prom(H), \prom(E)) \leq 2 \, \dWI(H,E).
\]
\end{lemma}

\section{Persistence-Based Flattening of One-Parameter Hierarchical Clusterings} \label{Pruning-and-flattening}

For many applications, one needs a clustering of the input data 
(in the sense of \cref{clustering-def}), not a hierarchical clustering. 
We say that a \emph{flattening} algorithm takes a hierarchical clustering of a set $\SET$, 
and returns a clustering of $X$. 
Persistable clusters data by first constructing a hierarchical clustering of the data 
(using the $\linkage{\lambda}$ algorithm from \cref{gamma-linkage-section}), 
and then applying the \emph{persistence-based flattening algorithm}, 
which we introduce in this section.

The most obvious flattening algorithm takes a hierarchical clustering $H$, 
and returns $H(r)$ for some index $r$. 
However, it can happen that $H$ encodes multi-scale clustering structure in the data 
that is not reflected in $H(r)$ for any single choice of $r$. 
We want a flattening algorithm that can extract clusters at multiple scales.

An example of such an algorithm is the ToMATo clustering algorithm \citep{chazal-guibas-oudot-skraba}, 
which computes a flattening of the hierarchical clustering induced by a filtered graph. 
A major advantage of ToMATo is its innovative parameter selection process: 
the user determines how fine the output clustering will be by choosing a merging parameter $\tau$, 
and this choice is guided by the barcode of the hierarchical clustering (\cref{figure:barcode}). 
On a technical level however, one disadvantage of this algorithm is that its output depends on a choice of ordering of the vertices in the input graph, 
and in some use cases there may not be a clear way to make this choice. 
The persistence-based flattening algorithm ($\PF$) is an adaptation of the ToMATo algorithm 
that avoids the dependence on an ordering of the input. 

As input, $\PF$ takes a one-parameter hierarchical clustering $H$. 
We prove a stability theorem for this algorithm that is stated in terms of interleavings; 
so, this result is compatible with our stability and consistency results for $\linkage{\lambda}$. 
Parameter selection is very similar to that of the ToMATo algorithm, however, for $\PF$, 
the user determines how fine the output clustering will be by choosing the number of clusters, 
guided by the barcode of the input. 

In many TDA applications, barcodes are used to distinguish significant features in data from noise. 
A cut-off is chosen between ``long'' and ``short'' bars; 
the long bars correspond to significant features, and the short bars to noise 
\citep{ghrist-barcodes, fasy-lecci-rinaldo-wasserman-balakrishnan-singh}. 
In order to choose the number of clusters for $\PF(H)$, the practitioner chooses how many bars in the barcode of $H$ to regard as significant features. 
If $\gapindex$ bars are chosen, the output of $\PF$ will consist of $\gapindex$ clusters. 
We call the difference between the length of the 
$\gapindex^{\mathrm{th}}$ longest and $(\gapindex+1)^{\mathrm{th}}$ longest bars 
the $\gapindex^{\mathrm{th}}$ \emph{gap size} of $H$. 
This quantity plays the key role in our stability theorem for $\PF$. 
The larger the gap size, the more stable the output will be. 
So, choosing the number of clusters boils down to looking at the barcode of $H$, 
and finding choices of $\gapindex$ such that the $\gapindex^{\mathrm{th}}$ gap size is large.

In this section, we restrict attention to $\bbR$-hierarchical clusterings. 
One can apply the constructions and results 
of this section to any one-parameter hierarchical clustering $H$ 
by first taking the $\extension{H}$ construction from \cref{extension}. 

We now define $\PF$. 
The basic idea is that one can extract a clustering from a one-parameter hierarchical clustering 
by taking the leaves (\cref{leaves-def}, \cref{fig:basic-defs-pic}). 
However, noise in the underlying data can lead to spurious, short leaves. 
So, we first prune the hierarchical clustering $H$ 
by taking the persistence-based pruning $H_{\geq \tau}$ 
(\cref{persistence-pruning-def}, \cref{fig:pers-pruning-pic}). 

The construction uses the $\gapindex^{\mathrm{th}}$ prominence gap of $H$ (\cref{gap-of-H-notation}),
the notion of persistent cluster (\cref{persistentclusterdef}), 
and the prominence diagram $\prom(H)$ (\cref{prom-H-def}).

\begin{definition}
    \label{persistence-based-flattening-def}
Let $H$ be an essentially finite $\bbR$-hierarchical clustering of a set $X$.
Assume that the $\gapindex^{\mathrm{th}}$ prominence gap of $H$ is non-empty.
The \define{persistence-based flattening} of $H$ with respect to the $\gapindex^{\mathrm{th}}$ prominence gap of $H$ is the set of $\gapindex$ pairwise-disjoint persistent clusters of $X$ given by $\PF(H, \gapindex) = \leaves(H_{\geq \tau})$, where $\tau = (\prom(H)(\gapindex-1) + \prom(H)(\gapindex))/2$.
\end{definition}

The output of $\PF$ is a set of pairwise-disjoint persistent clusters. 
This is important for our stability theorem. 
However, if we want a clustering of $X$ in the sense of \cref{clustering-def}, 
we take the underlying set (\cref{persistentclusterdef}) of each persistent cluster in $\PF(H, \gapindex)$.

When the input of $\PF$ is a hierarchical clustering induced by 
a finite filtered graph (\cref{filtered-graph-def}), 
$\PF$ can be computed by adapting the ToMATo algorithm \citep{chazal-guibas-oudot-skraba}. 
This is what we do for our implementation of Persistable \citep{scoccola-rolle-joss}.

In \cref{persistence-based-flattening-def}, 
we take $\tau$ to be the average of 
$\prom(H)(\gapindex-1)$ and $\prom(H)(\gapindex)$ for convenience. 
If one takes a different $\tau$ in the $\gapindex^{\mathrm{th}}$ prominence gap, 
one gets the same clustering of the underlying data by the following proposition, 
which is proved in \cref{appendix-simplification}.

\begin{proposition} \label{stability-PF-in-tau}
Let $H$ be an essentially finite $\bbR$-hierarchical clustering, 
and say $\gapindex \geq 1$ and $\tau, \tau' \in \gap_\gapindex(H)$. 
There is a bijection $m : \leaves(H_{\geq \tau}) \to \leaves(H_{\geq \tau'})$ 
such that for all $\bfC \in \leaves(H_{\geq \tau})$, 
the underlying sets of $\bfC$ and $m(\bfC)$ are equal.
\end{proposition}

There are many ways to measure the similarity between two clusterings of a data set 
(see, e.g., \cite{meila} and references therein), 
so there are many ways one could try to formulate a stability result for a flattening procedure. 
Our approach is based on the fact that $\PF$ produces a set of persistent clusters. 
The following stability theorem guarantees that if $H$ and $E$ are hierarchical clusterings 
that are sufficiently close in the correspondence-interleaving distance, 
then the persistent clusters in $\PF(H, \gapindex)$ are interleaved with the persistent clusters in $\PF(E, \gapindex)$. 
Here, the $\gapindex^{\mathrm{th}}$ gap size of $H$ determines what ``sufficiently close'' means. 
The proof of the theorem is in \cref{appendix-simplification}. 

\begin{theorem}\label{stability-tomato-flattening}
Let $H$ and $E$ be essentially finite 
$\bbR$-hierarchical clusterings of sets $X$ and $Y$ respectively. 
Let $\gapindex \geq 1$, and assume there is 
$\epsilon < \gapsize_\gapindex(H) / 16$ 
such that $H$ and $E$ are $\epsilon$-interleaved with respect to a correspondence $R \subseteq X \times Y$. 
Then there is a bijection $m : \PF(H, \gapindex) \to \PF(E, \gapindex)$ 
such that for all $\bfC \in \PF(H, \gapindex)$, 
$\bfC$ and $m(\bfC)$ are $3\epsilon$-interleaved with respect to $R$.
\end{theorem}

The interleavings guaranteed by this theorem imply that, 
if $\bfC \in \PF(H, \gapindex)$, and $x \in U(\bfC)$ appears early enough in the lifetime of $\bfC$, 
then every point in $Y$ that corresponds to $x$ under $R$ must belong to $U(m(\bfC))$.

Because this stability theorem for persistence-based flattening is stated in terms of interleavings, 
it can be combined with the stability and consistency results 
proved earlier in this paper. As an example, we state the following stability results 
for $\linkage{\lambda}$ (\cref{lambda-notation}). 
The combination of $\linkage{\lambda}$ and persistence-based flattening 
is the core algorithm of Persistable.

The first result concerns stability in the input data:

%

\begin{corollary} \label{stability-lambda-and-PF}
Let $\MPS$ be a compact metric probability space, let $\lambda = \lambda\cov^{x,y}$ 
with slope $\sigma$, and assume $\gap_\gapindex(\theoremlinkage{\lambda}(\MPS))$ is non-empty. 
Let $\MPSalt$ be a compact metric probability space with
\[
	\dGHP(\MPS, \MPSalt) < 
	\frac{\gapsize_\gapindex(\theoremlinkage{\lambda}(\MPS))}{16 \cdot \max(|1 / \sigma|, 2)} \, .
\]
There is a bijection 
$m: \PF(\theoremlinkage{\lambda}(\MPS), \gapindex) \to 
\PF(\theoremlinkage{\lambda}(\MPSalt), \gapindex)$ 
such that, for all $\bfC \in \PF(\theoremlinkage{\lambda}(\MPS), \gapindex)$, 
$\bfC$ and $m(\bfC)$ are $3\epsilon$-interleaved with respect to a correspondence between 
$\MPS$ and $\MPSalt$, for some 
$\epsilon < \gapsize_\gapindex(\theoremlinkage{\lambda}(\MPS)) / 16$.
\end{corollary}

\begin{proof}
By \cref{stability-lambda-linkage-in-X}(2), 
$\dWI(\linkage{\lambda}(\MPS), \linkage{\lambda}(\MPSalt)) \leq 
\dGHP(\MPS,\MPSalt) \cdot \max(|1 / \sigma|, 2)$. 
So, we can take $\epsilon$ with 
$\dWI(\linkage{\lambda}(\MPS), \linkage{\lambda}(\MPSalt)) < \epsilon < 
\gapsize_\gapindex(\theoremlinkage{\lambda}(\MPS)) / 16$. 
Then the result follows from \cref{stability-tomato-flattening}.
\end{proof}

The second result concerns stability in the choice of $\lambda$:

\begin{corollary} \label{stability-lambda-and-PF-in-lambda}
Let $\MPS$ be a compact metric probability space, 
let $\lambda = \lambda\cov^{x,y}$ with slope $\sigma$, and 
let $\lambda' = \lambda\cov^{x',y'}$ with slope $\sigma'$. Say 
\[
	\epsilon := \max \left( |x - x'|, \; |y - y'| \cdot \min(|1/\sigma|, |1/\sigma'|)  \right) 
	\, < \, 	\gapsize_\gapindex(\theoremlinkage{\lambda}(\MPS)) / 16 \, .
\]
Then there is a bijection 
$m: \PF(\theoremlinkage{\lambda}(\MPS), \gapindex) \to \PF(\theoremlinkage{\lambda'}(\MPS), \gapindex)$ 
such that, for all $\bfC \in \PF(\theoremlinkage{\lambda}(\MPS), \gapindex)$, 
$\bfC$ and $m(\bfC)$ are $3\epsilon$-interleaved.
\end{corollary}

\begin{proof}
By \cref{stability-lambda-linkage-in-lambda}, 
$\theoremlinkage{\lambda}(\MPS)$ and $\theoremlinkage{\lambda'}(\MPS)$ 
are $\epsilon$-interleaved. 
So, the result follows from \cref{stability-tomato-flattening}.
\end{proof}

The ToMATo algorithm takes as input a finite graph and a real-valued function $f$ on its vertices. 
This induces the upper-star filtration on the graph, 
where an edge $\{x,y\}$ appears at $\min\{f(x),f(y)\}$. 
We now describe the \emph{exhaustive persistence-based flattening algorithm} 
(\cref{tomato-flattening-algorithm}), 
which is essentially a generalization of ToMATo to the more general filtered graphs of \cref{filtered-graph-def}. 
This is of interest because the $\linkage{\lambda}$ of a finite metric space 
is induced by a filtered graph, but not by an upper-star filtration. 
We describe the precise relationship between {\sc ExhaustivePF} and ToMATo 
in \cref{exhaustive-PF-ToMATo} in \cref{appendix-simplification}.

We call the algorithm ``exhaustive'' because, unlike $\PF$, 
{\sc ExhaustivePF} clusters every point in its input. 
For $\PF$, points that enter $H_{\geq \tau}$ outside of a leaf do not get clustered. 
{\sc ExhaustivePF} uses the data of the input graph 
and an ordering of its simplices to assign such points to some leaf. 
For Persistable, we prefer $\PF$ to {\sc ExhaustivePF}, 
because of the good stability properties of $\PF$, 
and the fact that it does not depend on an ordering of the input. 
However, {\sc ExhaustivePF} also produces interesting results 
(see the Olive oil data in \cref{Persistable-on-benchmark-data} for an example). 

\begin{algorithm}
\caption{Exhaustive persistence-based flattening of the HC induced by a finite filtered graph}
\label{tomato-flattening-algorithm}
    \footnotesize
    \begin{algorithmic}[1]
    \Procedure{ExhaustivePF}{$G = [\sigma_1, \dots, \sigma_p],f,\tau$}
    \LineComment{Assume the simplices of $G$ are ordered such that $f(\sigma_i) \leq f(\sigma_{i+1})$ and $\sigma_i \subseteq \sigma_j \Rightarrow i \leq j$}
    \State Let $\mathtt{clusters} \gets \{\}$
    \For{$1 \leq i \leq p$}
        \If{$\sigma_i = \{x\}$} \Comment{A vertex appears and a cluster is born}
            \State $\mathtt{clusters} \gets \mathtt{clusters} \cup \big\{(\{x\},f(\sigma_i))\big\}$
        \ElsIf{$\sigma_i = \{x,y\}$} \Comment{An edge appears}
            \State Let $(c,u), (d,v) \in \mathtt{clusters}$ be such that $x \in c$ and $y \in d$
            \If{$c \neq d$} 
                \If{$f(\sigma_i) - u \leq \tau$ or $f(\sigma_i) - v \leq \tau$} \Comment{At least one cluster did not persist enough}
                    \State $\mathtt{clusters} \gets \left(\mathtt{clusters} \setminus \big\{(c,u), (d,v)\big\}\right) \cup \big\{(c\cup d,\min(u,v))\big\}$ \Comment{Merge clusters}
                \EndIf
            \EndIf
        \EndIf
    \EndFor
    \State \textbf{return} $\mathtt{clusters}$
    \EndProcedure
    \end{algorithmic}
\end{algorithm}

\section{Persistable} \label{Persistable}

Persistable is a pipeline for density-based clustering that integrates the algorithms defined in this paper. 
In another publication \citep{scoccola-rolle-joss}, we described the implementation of Persistable. 
In this section, we describe the design choices of Persistable in detail, 
and explain how these choices are motivated by the theoretical results in this paper.

Compared to existing clustering methods, 
a novel feature of Persistable is that 
the parameter selection process is guided by interactive visualization tools. 
This parameter selection process is based on the stability results proved in this paper; 
the visualization tools are inspired by tools 
from multiparameter persistent homology, 
in particular, the software library RIVET \citeyearpar{rivet}. 


We begin by demonstrating the Persistable pipeline on a simple running example, 
and then we evaluate its performance on real-world benchmark data sets. 
See the Persistable software repository 
(link available in \citealt{scoccola-rolle-joss}) 
for code that replicates all the examples in this section, 
as well as for further evaluations of Persistable on benchmark data sets.

\subsection{The Persistable Pipeline} \label{persistable-workflow}

\begin{figure}[tb]
    \centering
    \includegraphics[width=0.45\textwidth]{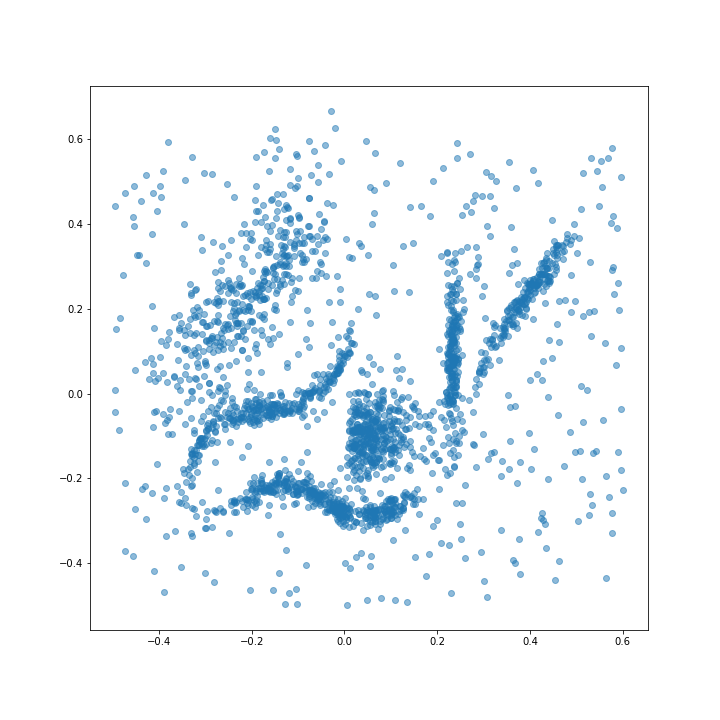}
    \includegraphics[width=0.45\textwidth]{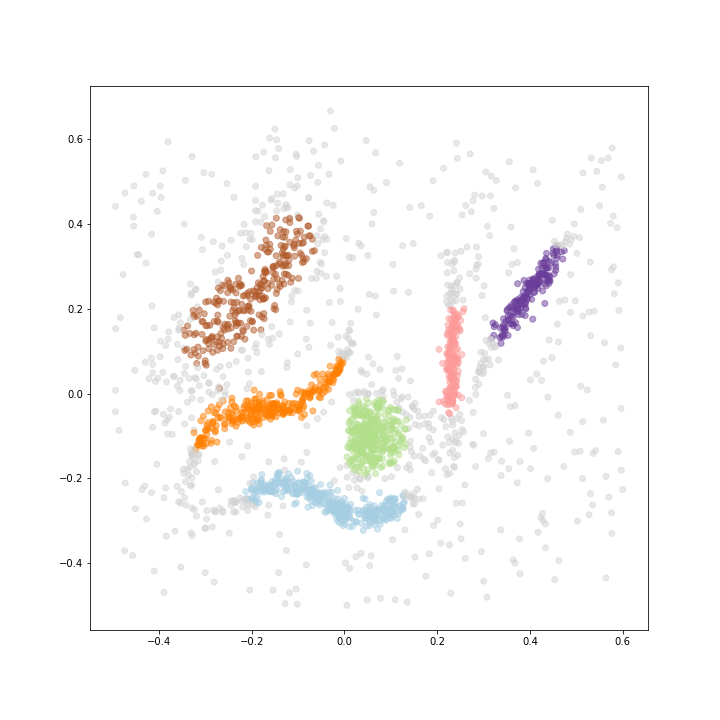}
    \caption{The running example. 
    We cluster the data with Persistable: 
    clusters are indicated by colors, 
    and gray points are classified as noise.}
    \label{hdbscan-data}
\end{figure}

\begin{figure}[p]
    \centering
    \includegraphics[width=\twC\textwidth]{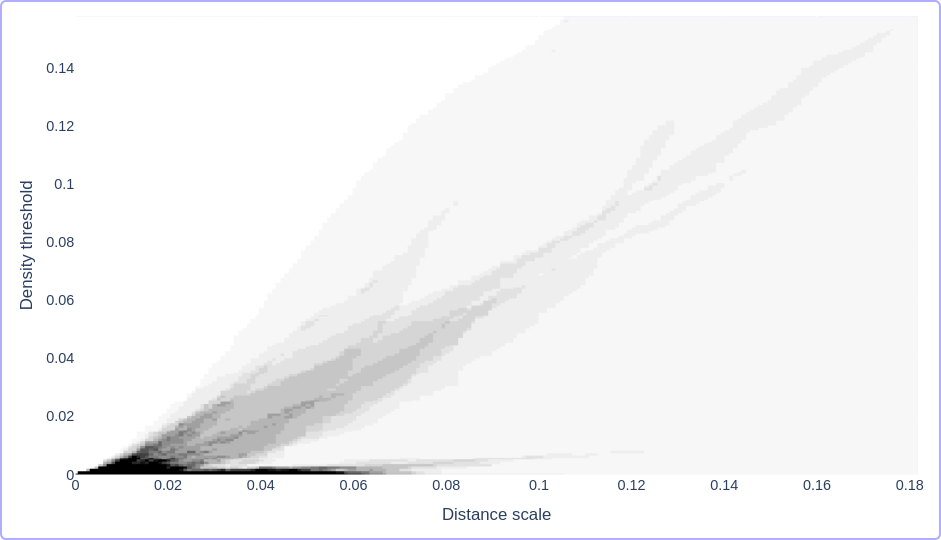}
    \includegraphics[width=\twC\textwidth]{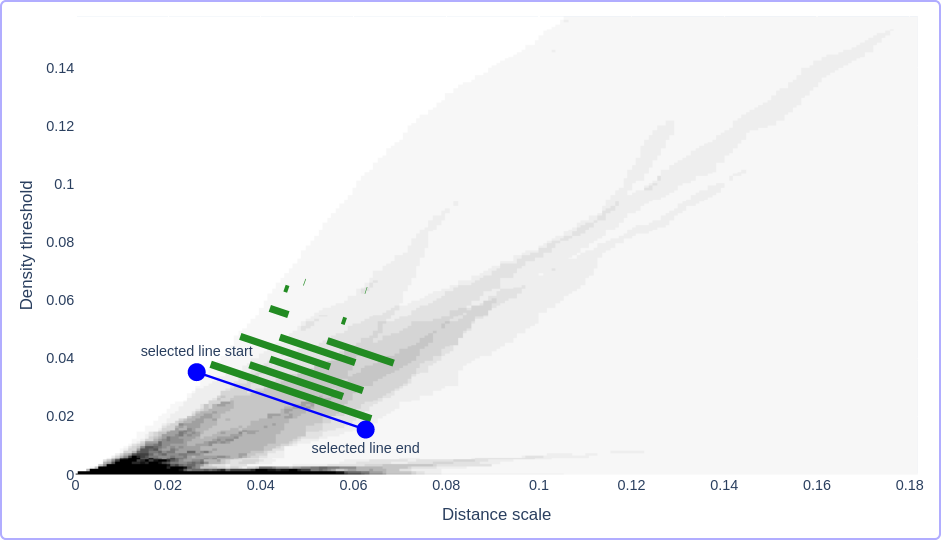}
    \caption{The first Persistable visualization is \textbf{the component counting function}; 
    we show this for the running example. 
    One can see typical behavior of degree-Rips ($\DR$): 
    when the distance scale $s$ is small and the density threshold $k$ is large, 
    no points are clustered; when $s$ is large and $k$ is small, 
    all points are clustered together. 
    In between these two regimes is a band of interesting cluster structure.
    The blue line segment defines a slice of $\DR$, and its barcode is plotted in green. 
    The sixth gap size of this slice is quite large 
    (the sixth-longest bar is much longer than the seventh-longest bar). 
    So, we choose six clusters for the persistence-based flattening. 
    The output is displayed in \cref{hdbscan-data}.}
    \label{component-counting-function}
\end{figure}

\begin{figure}[p]
    \centering
    \includegraphics[width=\twC\textwidth]{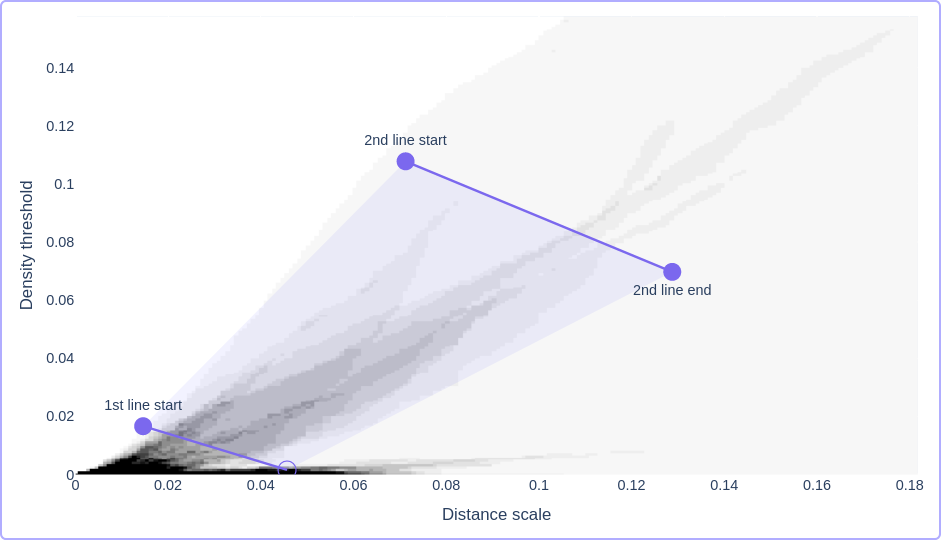}
    \includegraphics[width=\twC\textwidth]{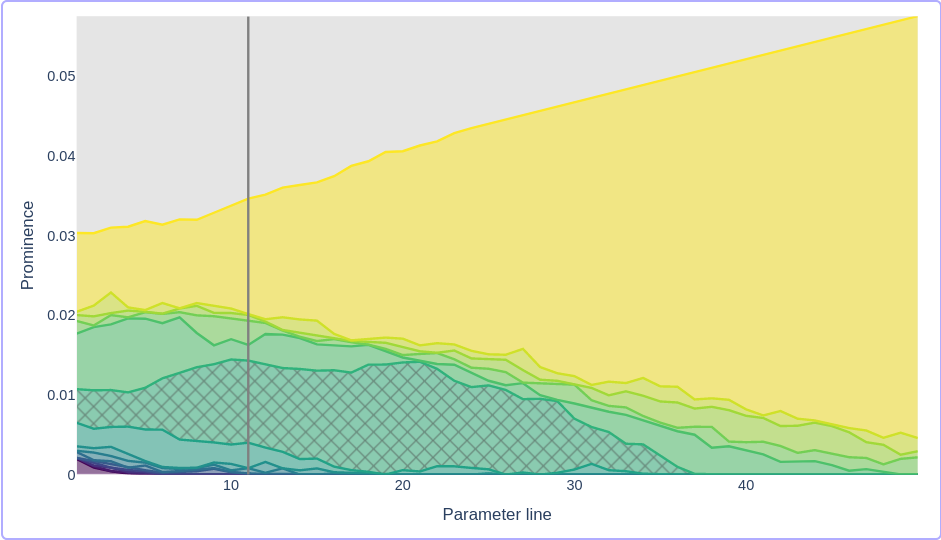}
    \caption{The second Persistable visualization is the \textbf{prominence vineyard}. 
    The user chooses two slices of degree-Rips by choosing the start and end points. 
    This determines a one-parameter family of slices that interpolate 
	between the two chosen slices. 
	For each slice in the family, Persistable computes the barcode, 
	and the prominence vineyard (on the right) displays 
	the prominence (i.e., length) of each bar in the barcode. 
	So, the first (top) curve in the prominence vineyard shows the length 
	of the longest bar in the barcode of each slice, 
	the second curve shows the length of the second-longest bar, etc. 
	We call these curves \emph{vines}. 
	The first gap (between the first and second vines, displayed in yellow) 
	is very large: this just reflects the fact that the longest bar of an HC 
	is typically much longer than any other bar. 
	The sixth gap (marked in the figure) is also large. 
	This means there are many slices in the vineyard 
	such that the sixth gap size of the slice is large. 
	We used this prominence vineyard to choose the slice in \cref{component-counting-function}.}
    \label{prominence-vineyard-figure}
\end{figure}

As input, Persistable takes a finite metric space $\Met$, 
and produces a clustering of $\Met$, in the sense of \cref{clustering-def}. 
As a running example, we use a synthetic data set from 
the \textsf{hdbscan} clustering library \citep{mcinnes-healy-astels-joss}. 
This data set is designed to be challenging for clustering algorithms, 
while being easy to visualize: see \cref{hdbscan-data}.

Conceptually, Persistable begins with the degree-Rips hierarchical clustering $\DR(\Met)$ 
that was described in the introduction 
(see \cref{DR-intro-figure}, and see \cref{def-DR} for the formal definition). 
We can get insight into $\DR(\Met)$ by plotting the 
\define{component counting function}, 
which is the function defined on the first quadrant of the plane 
where at $(s,k)$ we simply count the number of clusters in $\DR(\Met)(s,k)$. 
The first visualization in the Persistable pipeline 
is a heat map of the component counting function. 
See \cref{component-counting-function} 
for this visualization on the running example.

Now, Persistable constructs a clustering of the input $\Met$ in two steps.

\subsubsection{Step One: Reduce a Two-Parameter HC to a One-Parameter HC}
The first step is to reduce from the two-parameter hierarchical clustering $\DR(\Met)$ 
to a one-parameter hierarchical clustering, by taking a slice 
(see \cref{slicing-intro-figure}). 
Using the notation established in \cref{lambda-notation}, 
this means taking $\linkage{\lambda}(\Met)$, 
where $\lambda$ is a choice of line in the $(s,k)$-parameter space. 
For example, see the blue line segment in \cref{component-counting-function}. 
The practitioner can choose such a slice 
by clicking on the component counting visualization 
to choose the start and end points of a line segment. 
This determines an interval $(s_{\mathrm{start}}, s_{\mathrm{end}})$ 
on which the one-parameter hierarchical clustering $\linkage{\lambda}(\Met)$ 
is defined, and for $s \in (s_{\mathrm{start}}, s_{\mathrm{end}})$, 
$\linkage{\lambda}(\Met)(s) = \DR(\Met)(s, \sigma \cdot s + y)$, 
where $y$ is the $y$-intercept of the selected line, 
and $\sigma$ is its slope. 
The second visualization in the Persistable pipeline is an interactive tool for choosing slices. 
We introduce this tool after we discuss the second step of Persistable.

\subsubsection{Step Two: Reduce a One-Parameter HC to a Clustering}
The second step is to reduce from the one-parameter hierarchical clustering $\linkage{\lambda}(\Met)$ 
to a clustering of $\Met$, by applying the persistence-based flattening procedure, 
defined in \cref{Pruning-and-flattening}. 
To apply the persistence-based flattening, 
one chooses the number of clusters in the output, 
guided by the barcode of the HC. 
The barcode is a visual summary of an HC (see \cref{figure:barcode}). 
If one chooses $n$ clusters, 
these will correspond to the $n$ longest bars in the barcode. 
We call the difference between the length of the 
$\gapindex^{\mathrm{th}}$ longest and $(\gapindex+1)^{\mathrm{th}}$ longest bars 
the $\gapindex^{\mathrm{th}}$ \emph{gap size} of the HC. 
As explained in \cref{Pruning-and-flattening}, 
the larger the  $\gapindex^{\mathrm{th}}$ gap size, 
the more stable the output with $\gapindex$ clusters will be. 
So, choosing the number of clusters boils down to looking at the barcode, 
and finding choices of $\gapindex$ such that the $\gapindex^{\mathrm{th}}$ gap size is large.

In the case of the running example, 
there is a drop-off between the sixth and seventh longest bars, 
so choosing six bars (i.e., six clusters) is a reasonable choice 
(see \cref{component-counting-function}). 
The resulting clustering of the data is displayed in \cref{hdbscan-data}.

\subsubsection{Choosing the Slice}
To complete the description of the Persistable pipeline, 
it remains to discuss how the practitioner chooses a slice. 
The answer is motivated by Step 2. 
When one applies the persistence-based flattening, 
one looks for large gap sizes in the barcode of $\linkage{\lambda}(\Met)$. 
So, the second visualization tool in the Persistable pipeline 
helps the user identify slices $\lambda$ that lead to barcodes with large gap sizes.

The practitioner begins with the component counting visualization 
(\cref{component-counting-function}). 
From this, one can find 
the region of the $\DR$ parameter space where interesting cluster structure is captured. 
The practitioner is asked to choose two slices in the $\DR$ parameter space, 
which determine a one-parameter family of slices that interpolate 
between the two chosen slices. 
As $\lambda$ varies in the family, 
the slice $\linkage{\lambda}(\Met)$ changes in a continuous way 
by \cref{stability-lambda-linkage-in-lambda}. 
Thus, the barcode of $\linkage{\lambda}(\Met)$ also changes in a continuous way. 
In the \textbf{prominence vineyard} visualization, 
Persistable plots the \emph{prominence} (i.e., length) 
of each bar in the barcode of $\linkage{\lambda}(\Met)$. 
As $\lambda$ varies, these prominences trace out curves, which we call \emph{vines} 
(this is standard terminology in topological data analysis, beginning with 
\citealp{cohen-steiner-edelsbrunner-morozov}).
See \cref{vineyard-intro-figure}. 
In \cref{prominence-vineyard-figure}, we display a prominence vineyard for the running example. 
There is a large gap between the first and second vines; 
this is typical behavior, as the longest bar in the barcode of an HC 
is just the interval on which the HC is non-empty, which is often much longer than any other bar. 
More interesting structure is captured by the other gaps in the prominence vineyard. 
For example, there is a large gap between the sixth and seventh vines, 
which is marked in \cref{prominence-vineyard-figure}. 
If we choose a slice that includes this gap, 
then the sixth gap size of the barcode of this slice is large. 
Indeed, this is how we picked the slice that we used in Step 2, above.

\subsection{Examples of Persistable on Benchmark Data Sets} \label{Persistable-on-benchmark-data}

We now demonstrate how Persistable can identify meaningful cluster structure in data.

\subsubsection{Rideshare Data}
We consider a data set consisting of approximately 560\,000 rideshare pickup locations 
in the New York City area from April, 2014. 
The data set is the result of a Freedom of Information request 
by the website \cite{rideshare-data}. 
This data set has very complex cluster structure, at many different scales. 
There are informative clusterings at a very coarse level, 
and also at much finer levels. 

To get a feel for the data, we begin by considering the subset of points 
in a square centered at LaGuardia Airport (see \cref{laguardia-data-fig}), 
which consists of approximately 10\,000 points.

\begin{figure}[p]
    \centering
    \begin{overpic}[width=0.49\textwidth]{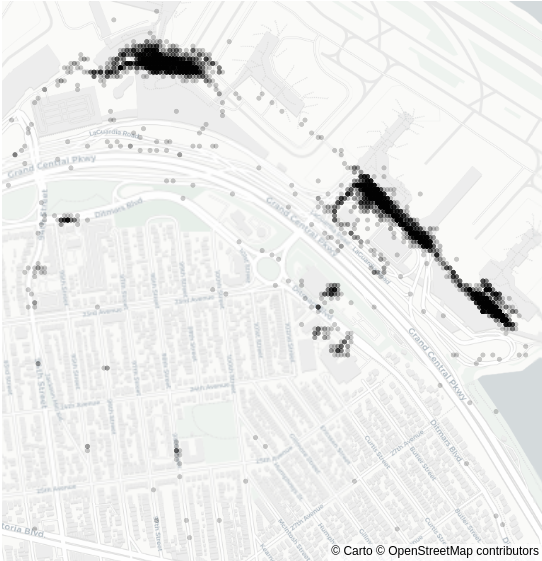}
	\put (52,49) {{\footnotesize M}}
	\put (52,40) {{\footnotesize H}}
	\put (60,32) {{\footnotesize P}}
	\put (33,17) {{\footnotesize S}}
	\put (83,55) {{\footnotesize C}}
	\put (8,45) {{\footnotesize A}}
	\put (11,55) {{\footnotesize N}}
	\put (28,80) {{\footnotesize B}}
	\end{overpic}
    \includegraphics[width=0.49\textwidth]{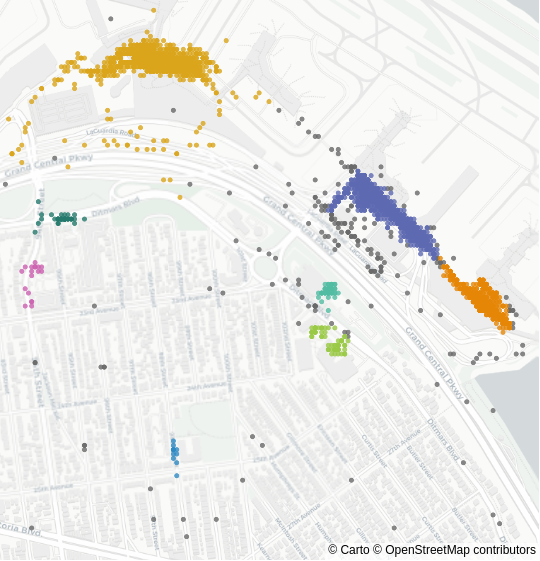}
    \caption{The subset of the Rideshare data in a square centered at LaGuardia Airport. 
    On the left, some relevant landmarks are marked: 
    ACE Rent A Car (A), 
    National Car Rental (N), 
    LaGuardia Airport Terminal B (B), 
    Terminal C (C), 
    Marriott Hotel (M), 
    Hampton Inn Hotel (H), 
    LaGuardia Plaza Hotel (P), 
    P.S. 127 Aerospace Science Magnet School (S). 
    On the right, a clustering produced by Persistable, 
    using the slice displayed in \cref{Persistable-laguardia-visualizations}. 
    Gray points are unclustered.}
    \label{laguardia-data-fig}
\end{figure}

\begin{figure}[p]
    \centering
    \includegraphics[width=\twC\textwidth]{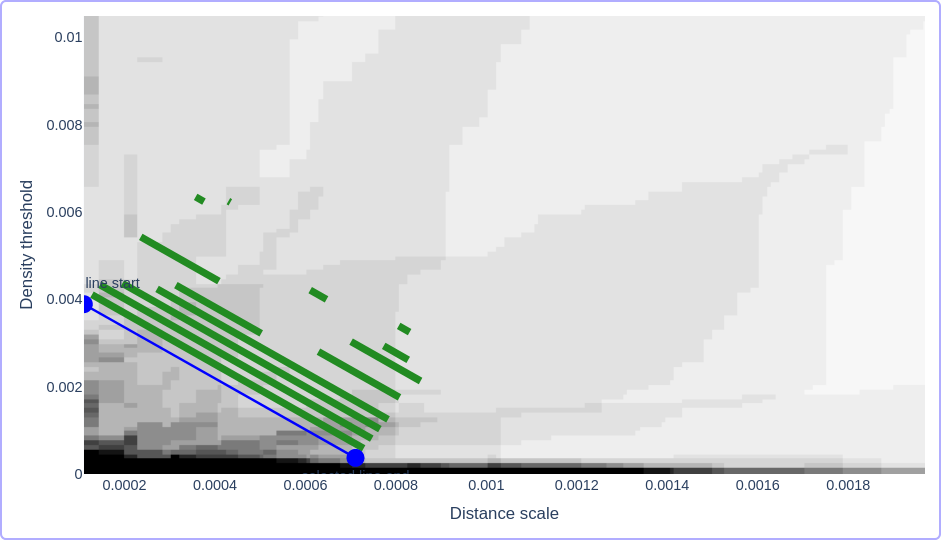}
    \includegraphics[width=\twC\textwidth]{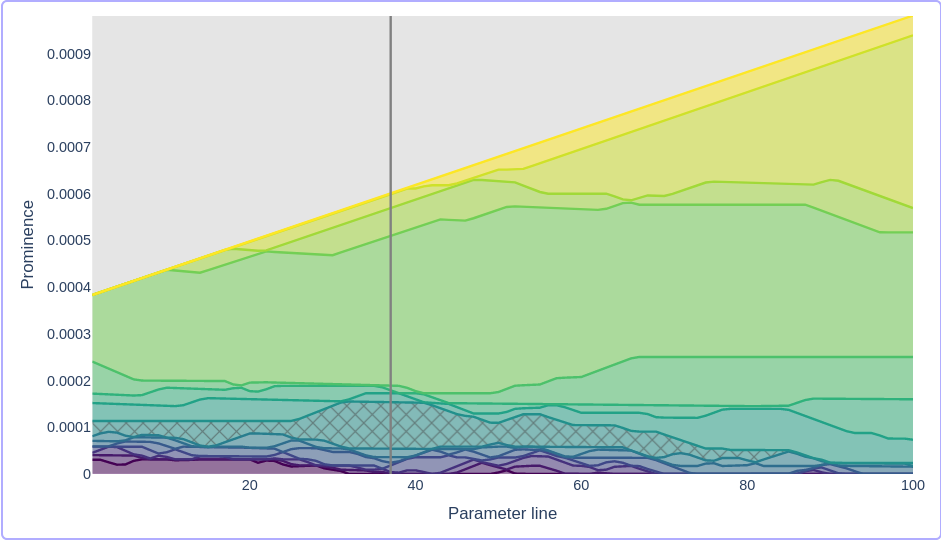}
    \caption{Persistable visualizations for 
    the subset of the Rideshare data near LaGuardia Airport. 
    On the left, the component counting visualization, 
    and on the right, a prominence vineyard visualization. 
    We use the prominence vineyard to choose a slice, 
    which appears as the blue line segment on the component counting visualization; 
    the corresponding slice in the vineyard is marked by a vertical line. 
    The barcode of this slice is displayed in green.
    Several gaps in this vineyard lead to interesting clusterings. 
    If we choose the gap below the eighth vine (marked in this figure), 
    we get the clustering of the data displayed in \cref{laguardia-data-fig}. 
    See \cref{appendix-Persistable-laguardia-visualizations} 
    in \cref{appendix-Persistable} 
    for the result of choosing the gap below the fourth vine.}
    \label{Persistable-laguardia-visualizations}
\end{figure}

\begin{figure}[p]
    \centering
    \includegraphics[width=\twC\textwidth]{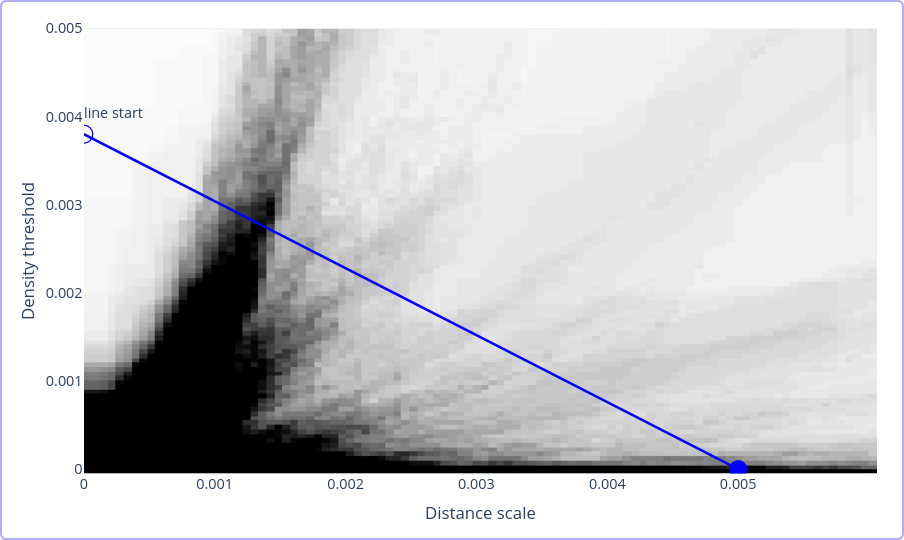}
    \includegraphics[width=\twC\textwidth]{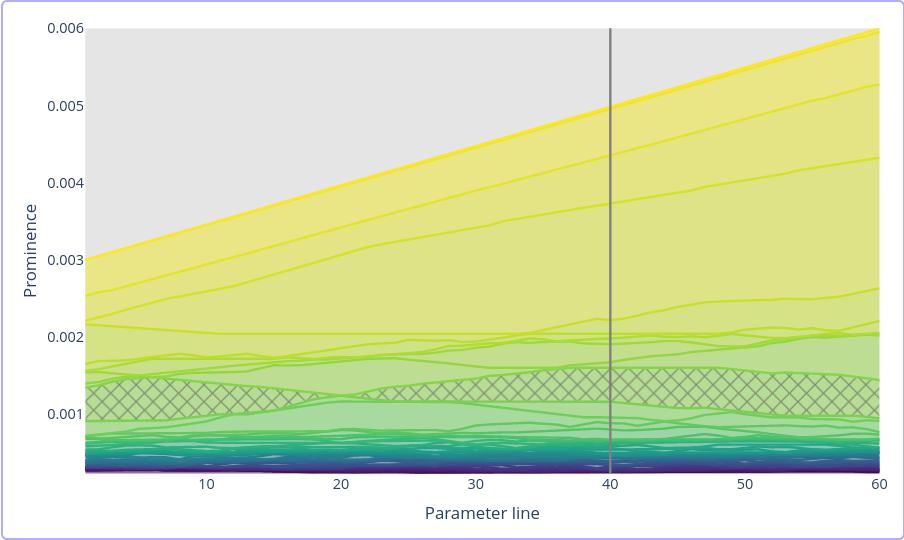}
    \caption{Persistable visualizations for the Rideshare data. 
    On the left, the component counting visualization, 
    and on the right, a prominence vineyard visualization. 
    While the component counting visualization is very complicated, 
    one can easily identify interesting gaps in the prominence vineyard. 
    We use the prominence vineyard to choose a slice, 
    marked on the component counting function by a blue line and on the vineyard by a vertical line. 
    If we choose the marked gap, 
    we get the clustering of the data displayed in \cref{rideshare-data-fig}.}
    \label{Persistable-rideshare-visualizations}
\end{figure}

\begin{figure}[p]
    \centering
    \includegraphics[width=0.49\textwidth]{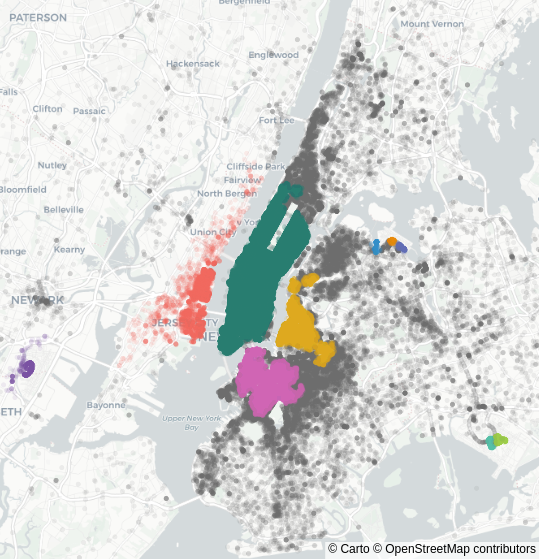}
    \includegraphics[width=0.49\textwidth]{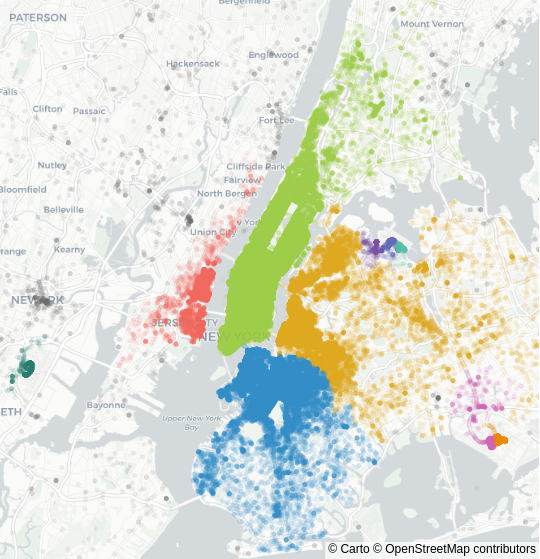}
    \caption{The Rideshare data, clustered using Persistable. 
    Gray points are unclustered. 
    In both cases, the clustering is obtained by choosing the slice 
    indicated in \cref{Persistable-rideshare-visualizations}. 
    On the left, the clustering is obtained using the usual Persistable pipeline, 
    choosing the gap marked in \cref{Persistable-rideshare-visualizations}. 
    On the right, the clustering is obtained using {\sc ExhaustivePF}, rather than $\PF$. 
    We find large clusters in Manhattan, Brooklyn, Queens, and New Jersey, 
    as well as smaller clusters at Newark, LaGuardia, and John F. Kennedy airports.}
    \label{rideshare-data-fig}
\end{figure}

While this data set is easily visualizable, 
it is challenging for many density-based clustering algorithms, 
because its apparent cluster structure takes place at very different levels of density. 
For example, there are approximately 4000 data points clustered near Terminal~B of the airport, 
and meanwhile, there is a cluster of approximately 60 data points 
near the LaGuardia Airport Marriott hotel, 
and an even smaller cluster of 12 data points near the P.S. 127 Aerospace Science Magnet School.

In particular, this data set is challenging for HDBSCAN 
\citep{campello-moulavi-sander}, a popular density-based clustering algorithm 
that is based on the robust single-linkage algorithm of \cref{robust-single-linkage} 
(for a description in these terms, see \citealp{mcinnes-healy}). 
Unlike the related DBSCAN algorithm \citep{dbscan}, 
HDBSCAN can detect multi-scale cluster structure that takes place at a range of distance scales. 
However, like robust single-linkage, it relies on a fixed density threshold, 
and for this reason, cannot find the multi-scale cluster structure in this example 
(i.e., it cannot simultaneously find the large cluster near Terminal~B and the small cluster near the hotel).  
See the jupyter notebook for this data set at 
the Persistable repository \citeyearpar{scoccola-rolle-joss} 
to try clustering the data with HDBSCAN.

On the other hand, Persistable is sensitive to this kind of multi-scale clustering structure, 
because it uses slices of $\DR$ in which both the density threshold and the distance scale vary. 
In \cref{Persistable-laguardia-visualizations}, we show Persistable visualizations 
for the data. 
Guided by these visualizations, one can use Persistable to find 
clusterings of the rideshare data that simultaneously capture the 
large clusters near the airport terminals and the small clusters in the surrounding neighborhood.

Now we consider the complete Rideshare data set. 
The complexity of the data is reflected in the Persistable visualizations; 
see \cref{Persistable-rideshare-visualizations}. 
Using Persistable, one can obtain informative clusterings of this data at coarser or finer scales. 
For example, see \cref{rideshare-data-fig} for coarse but informative results. 
Using smaller gaps in the prominence vineyard, one can obtain finer results, 
with, for example, clusters centered at Penn Station and the Meatpacking district in Manhattan, 
and Williamsburg and Downtown Brooklyn. 
See the jupyter notebook for this data set at 
the Persistable repository \citeyearpar{scoccola-rolle-joss} for details.

We are able to easily compute clusterings of this data set with Persistable 
using the subsampling approximation described in \cref{subsampling-approximation} 
(which is justified by the Gromov--Hausdorff--Prokhorov stability of $\lambda$-linkage). 
Using a subsample of 30\,000 data points, we are able to compute clusterings 
of the complete Rideshare data set in a matter of seconds, using approximately 200 MB of RAM, 
using a laptop with an Intel(R) Core(TM) i5 CPU (4 cores, 1.6GHz) and 8 GB RAM, running GNU/Linux.

Without the subsampling approximation, clustering this data set would be a significant computational challenge. 
For context, we clustered the data using the high-performance implementation of HDBSCAN of 
\cite{mcinnes-healy-astels-joss}. 
This is a natural comparison, because HDBSCAN and Persistable are very similar on an algorithmic level, 
and because the implementation of \cite{mcinnes-healy-astels-joss} is very similar to 
our implementation of Persistable 
(indeed, important components of our implementation come directly from this implementation of HDBSCAN). 
The key performance advantage that Persistable has in this example is the subsampling approximation. 
An analogous approximation scheme is not valid for HDBSCAN, 
as the hierarchical clustering algorithm underlying HDBSCAN (robust single-linkage) 
is not Gromov--Hausdorff--Prokhorov stable (see \cref{instability-related-methods}). 

See \cref{hdbscan-rideshare-performance-table} in \cref{appendix-Persistable} for the results. 
The memory usage of HDBSCAN scales with $n \cdot k$, where $n$ is the number of data points 
and $k$ is the density threshold parameter $\mathsf{min\_samples}$. 
This means that, on the laptop described above, we are only able to run HDBSCAN with very small values of the $\mathsf{min\_samples}$ parameter, producing only very fine clusterings.

\subsubsection{Olive Oil Data}
We consider a data set concerning the fatty acid composition of $572$ samples of olive oil. 
For each sample, the percentages of $8$ fatty acids were measured. 
The samples were taken from nine regions of Italy, 
and these regions are grouped into three larger areas 
(South Italy, Sardinia, and North Italy). 
Each sample is labeled with its region of origin. 

This is a useful test data set for classification and clustering methods, 
and it is used as a benchmark data set for density-based clustering by 
\cite{stuetzle-nugent}. 
The data set is due to \cite{olive-oil-source}, 
and we obtained it from the supplementary materials of \cite{stuetzle-nugent}. 
Using Persistable, one can recover much of the hierarchical clustering structure 
defined by the regions of origin.

We consider the data as points in $\bbR^8$ with the Euclidean metric; 
each feature is independently centered to have mean zero and scaled to unit variance.
One can see many large gaps in the prominence vineyard visualization 
(see \cref{Persistable-oliveoil-visualizations}). 
This is a consequence of the multi-scale clustering structure of the data. 
For example, say we choose the slice indicated in \cref{Persistable-oliveoil-visualizations}. 
If we choose the large gap between the third and fourth vines 
(i.e., we choose three clusters in the persistence-based flattening), 
we get a clustering of the data that fits the large area labels perfectly, 
with $89\%$ of the data clustered 
(see \cref{oliveoil-3-clusters-table} in \cref{appendix-Persistable} for the confusion matrix). 
Meanwhile, if we choose the gap between the eighth and ninth vines 
marked in \cref{Persistable-oliveoil-visualizations}, 
we get a clustering that fits the region labels very accurately: 
the adjusted Rand index is $0.98$, and $49\%$ of the data is clustered 
(see \cref{oliveoil-8-clusters-table} in \cref{appendix-Persistable} for the confusion matrix). 

In order to cluster more data points, 
we also run the exhaustive persistence-based flattening (\cref{tomato-flattening-algorithm}). 
We choose the same slice and gap as before, 
but replace the usual persistence-based flattening with the exhaustive persistence-based flattening. 
The result clusters $95\%$ of the data, 
with an adjusted Rand index of $0.90$ with respect to the region labels 
(see \cref{oliveoil-8-clusters-exhaustivePF-table} in \cref{appendix-Persistable} for the confusion matrix). 

For comparison, \cite{stuetzle-nugent} apply a density-based clustering algorithm to this data, 
and report an adjusted Rand index of $0.61$ with respect to the region labels, 
with all data points clustered.

\begin{figure}[btp]
    \centering
    \includegraphics[width=\twC\textwidth]{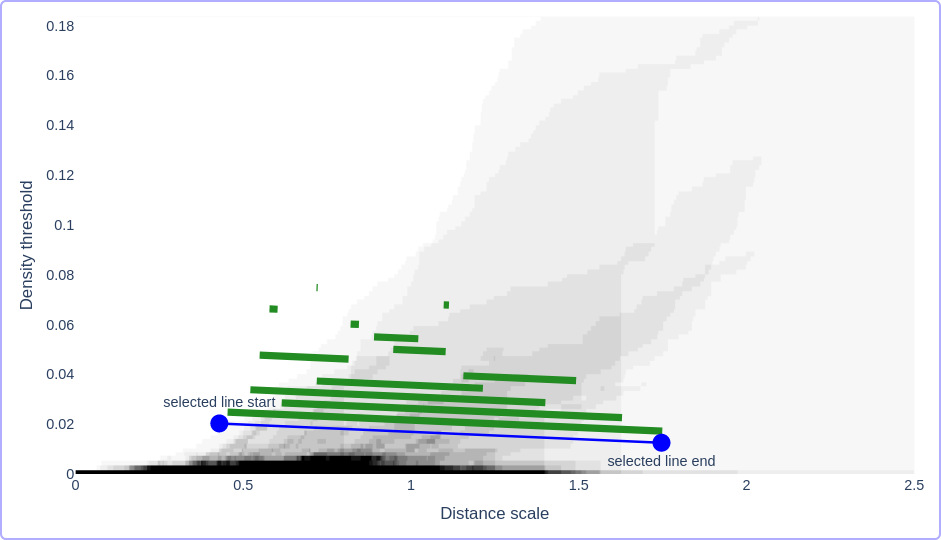}
    \includegraphics[width=\twC\textwidth]{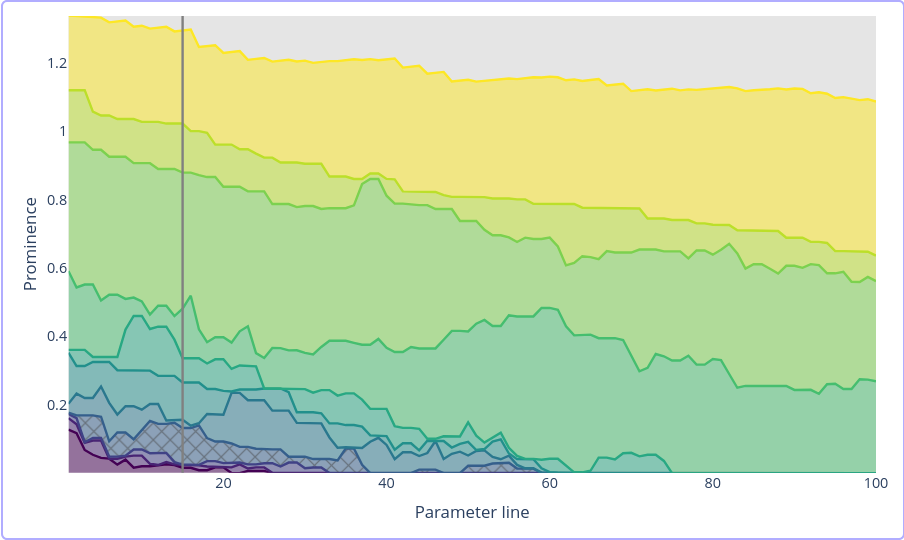}
    \caption{Persistable visualizations for the olive oil data. 
    On the left, the component counting visualization, 
    and on the right, a prominence vineyard visualization. 
    We use the prominence vineyard to choose a slice, 
    which appears as the blue line segment on the component counting visualization; 
    the corresponding slice in the vineyard is marked by a vertical line. 
    The barcode of this slice is displayed in green.
    Several gaps in this vineyard lead to interesting clusterings. 
    If we choose the gap below the eighth vine (marked in this figure), 
    we get a clustering that fits the region labels very accurately. 
    If we choose the gap below the third vine, 
    we get a clustering that fits the large area labels.}
    \label{Persistable-oliveoil-visualizations}
\end{figure}

\section{Conclusions}
\label{conclusions}
We conclude by mentioning some possible directions for future work. 
As we explained in \cref{choice-of-GHP}, \cite{blumberg-lesnick-2param-published} 
prove a stability result for the simplicial degree-Rips bifiltration using the Gromov--Prokhorov distance. 
The authors also provide experimental evaluation of their stability result, 
and suggest the goal of developing a stability theory for degree-Rips that better explains such experimental results 
\citep[Remark A.3]{blumberg-lesnick-2param-published}. 
It may be fruitful to pursue this goal in the setting of the degree-Rips hierarchical clustering.

In this paper we used slices of kernel linkage given by lines $\lambda$ in the parameter space. 
Our stability theorem for kernel linkage implies that appropriately chosen non-linear slices 
are also stable, and our consistency theorem applies also to appropriate non-linear slices. 
An interesting question is whether there are use cases in which non-linear slices lead to more informative clusterings.

For Persistable, we use a two-step process: 
we first reduce from degree-Rips to a one-parameter hierarchical clustering by taking a slice, 
then we reduce to a clustering using the persistence-based flattening. 
We begin by taking a slice of degree-Rips because one-parameter hierarchical clusterings are much simpler 
than multiparameter hierarchical clusterings. 
This distinction between one-parameter hierarchical clusterings and multiparameter hierarchical clusterings is analogous to the distinction between one-parameter persistence modules and multiparameter persistence modules; 
see \cite{bauer-etal-hierarchical-clustering} for a discussion 
of the structural complexity of multiparameter persistence modules 
with a focus on persistence modules arising from hierarchical clusterings. 
For our purposes, it is particularly important that 
the barcode and the persistence-based flattening algorithm are only defined for one-parameter hierarchical clusterings. 
Not much is known about flattening multiparameter hierarchical clusterings directly, 
but see \cite{jardine-stable-components} and \cite{shiebler}. 
An interesting question for future research is how one can stably extract a single clustering from 
degree-Rips or kernel linkage, without taking a one-parameter slice. 


\acks{%
We thank Michael Lesnick for telling us about 
the good properties of one-parameter slices in which all parameters vary, 
and for telling us about his work with Blumberg on the stability of degree-Rips, 
which inspired our work here. 
We thank Leland McInnes for helpful discussion about HDBSCAN.
And, we thank Dan Christensen, Rick Jardine, Michael Kerber, and Matt Piekenbrock
for helpful discussions about this project and related topics.

A.R.~was partially supported by Austrian Science Fund (FWF) 
grant number P 29984-N35, 
and by the German Research Foundation (DFG) Project-ID 195170736 
``TRR109 Discretization in Geometry and Dynamics''. 
L.S.~was partially supported by the National Science Foundation through grant CCF-2006661
and CAREER award DMS-1943758, as well as by EPSRC grant ``New Approaches to Data
Science: Application Driven Topological Data Analysis'', EP/R018472/1.
For the purpose of Open Access, the authors have applied a CC BY public copyright licence to any Author Accepted Manuscript (AAM) version arising from this submission.
}

\appendix

\addtocontents{toc}{\protect\setcounter{tocdepth}{1}}

\section{Missing Details}

\subsection{Details from \cref{Hierarchical-clustering}}
\label{appendix-HC}

%

\begin{proof}
\textbf{(of \cref{dI-and-infty-distance})}
First, say $H$ and $E$ are $\epsilon$-interleaved 
for $\epsilon \geq 0$. 
Let $x,y \in X$; we show 
$|\mergefun{H}(x,y) - \mergefun{E}(x,y)| \leq \epsilon$. 
Without loss of generality, 
$\mergefun{H}(x,y) \leq \mergefun{E}(x,y)$. 
Let $r > \mergefun{H}(x,y)$. 
Then there is $C \in \extension{H}(r)$ with $x,y \in C$. 
By the $\epsilon$-interleaving property, 
there is $D \in \extension{E}(r + \epsilon)$ 
with $C \subseteq D$, 
and thus $\mergefun{E}(x,y) \leq r + \epsilon$.
This shows that 
$|\mergefun{H}(x,y) - \mergefun{E}(x,y)| \leq \epsilon$, 
and it follows that 
$\dI(H, E) \geq d_{\infty}(\mergefun{H}, \mergefun{E})$.

Now, say $\epsilon > d_{\infty}(\mergefun{H}, \mergefun{E})$. 
We show $H$ and $E$ are $\epsilon$-interleaved. 
Let $r \in \bbR$, and let $C \in \extension{H}(r)$. 
Choose $x \in C$. 
As $\mergefun{H}(x,x) \leq r$, 
we have $\mergefun{E}(x,x) < r + \epsilon$, 
so that there is $D \in \extension{E}(r + \epsilon)$ 
with $x \in D$. 
Similarly, for any $y \in C$, 
$\mergefun{H}(x,y) \leq r$, 
so that $\mergefun{E}(x,y) < r + \epsilon$, 
and thus $y \in D$, 
and therefore $C \subseteq D$. 
This shows that $\extension{H}(r) \preceq \extension{E}(r + \epsilon)$. 
A symmetric argument shows that 
$\extension{E}(r) \preceq \extension{H}(r + \epsilon)$, 
and thus $H$ and $E$ are $\epsilon$-interleaved. 
It follows that 
$\dI(H, E) \leq d_{\infty}(\mergefun{H}, \mergefun{E})$, 
and the proposition follows.
\end{proof}

\begin{proof}
\textbf{(of \cref{Hf-stability})}
Let $\epsilon \geq 0$. 
We show that $||f - g||_\infty \leq \epsilon$ 
if and only if $H(f)$ and $H(g)$ are $\epsilon$-interleaved, 
and the proposition follows. 
If $||f - g||_\infty \leq \epsilon$,
then, for every $r \geq \epsilon$, we have $\{f \geq r\}\subseteq \{g \geq r-\epsilon\}$,
and $\{g \geq r\} \subseteq \{f \geq r-\epsilon\}$.
This implies that, after taking connected components,
every connected component of $\{f \geq r\}$ is included in
a connected component of $\{g \geq r-\epsilon\}$,
and that every connected component of $\{g \geq r\}$ is included in
a connected component of $\{f \geq r-\epsilon\}$. 
If $H(f)$ and $H(g)$ are $\epsilon$-interleaved, then, 
for every $x$ in the support of the functions,
if $f(x) \geq \epsilon$, then there exists a cluster
in $\{g \geq f(x) - \epsilon\}$ containing $x$, since $x \in \{f \geq f(x)\}$.
This implies that for any $x$ in the support of the functions we have $g(x) \geq f(x) - \epsilon$.
A symmetric argument shows that
$f(x) \geq g(x) - \epsilon$ for every $x$ in the support, concluding the proof.
\end{proof}

\begin{proof}
\textbf{(of \cref{CI-distance-properties})} 
The only non-trivial case is the triangle inequality, which is proved by
composing correspondences.
If $X,Y,Z$ are sets, and $R \subseteq X \times Y$ 
and $S \subseteq Y \times Z$ are correspondences, 
$S \circ R \subseteq X \times Z$ is the correspondence
$S \circ R = \{ (x, z) : \exists y \in Y \text{ with } (x, y) \in R \text{ and } (y, z) \in S \}.$
If $H$ and $E$ are $\vec{\epsilon}$-interleaved with respect to $R$, 
and $E$ and $F$ are $\vec{\delta}$-interleaved with respect to $S$,
then $H$ and $F$ are 
$(\vec{\epsilon} + \vec{\delta})$-interleaved with respect to $S \circ R$. 
From this it follows that 
$\dWI(H, F) \leq \dWI(H, E) + \dWI(E, F)$.
\end{proof}

\begin{lemma} \label{dis-interleaving-lemma}
Let $H$ and $E$ be ultrametric hierarchical clusterings 
of sets $X$ and $Y$ respectively. 
If $R$ is a correspondence between $X$ and $Y$, then 
the distortion of $R$ is 
$\distortion(R) = 
\inf \{\epsilon \geq 0 : 
H,E \text{ are } \epsilon\text{-interleaved w.r.t. } R\}$.
\end{lemma}

\begin{proof}
First we show that if $H$ and $E$ are 
$\epsilon$-interleaved with respect to $R$, 
then $\distortion(R) \leq \epsilon$. 
Let $(x,y),(x',y') \in R$. 
If $r > \mergefun{H}(x,x')$, 
then there is $C \in \extension{H}(r)$ containing $x,x'$, 
and thus $\pi_X^{-1}(C)$ 
contains $(x,y),(x',y')$. 
By the interleaving property, 
there is a cluster in $\pi_Y^{*}(\extension{E})(r+\epsilon)$ 
containing $(x,y),(x',y')$, 
and thus there is a cluster in $\extension{E}(r+\epsilon)$ 
containing $y,y'$. 
So, $\mergefun{E}(y,y') \leq r + \epsilon$, 
and thus $\mergefun{E}(y,y') 
\leq \mergefun{H}(x,x') + \epsilon$. 
Together with a symmetric argument, we have 
$|\mergefun{H}(x,x')-\mergefun{E}(y,y')| \leq \epsilon$. 
Thus $\distortion(R) \leq \epsilon$. 
Now, we show that $H$ and $E$ are 
$\epsilon$-interleaved with respect to $R$, 
for any $\epsilon > \distortion(R)$, 
which finishes the proof. 
Let $r \in \bbR$ and let $C \in \extension{H}(r)$. 
We need to show there is $D \in \extension{E}(r + \epsilon)$ 
such that $\pi_X^{-1}(C) \subseteq \pi_Y^{-1}(D)$. 
Let $x,x' \in C$ and let $(x,y),(x',y') \in R$. 
We have $r \geq \mergefun{H}(x,x')$, 
therefore $\mergefun{E}(y,y') \leq r + \distortion(R)$, 
and thus there is $D \in \extension{E}(r + \epsilon)$ 
with $y,y' \in D$. 
It follows that $(x,y),(x',y') \in \pi_Y^{-1}(D)$, 
and as $x,x'$ were arbitrary, 
we have $\pi_X^{-1}(C) \subseteq \pi_Y^{-1}(D)$.
\end{proof}

\begin{lemma} \label{formulafordensity}
Let $K$ be a kernel, and let $\MPS$ be a metric probability space.
Let $K^{-1} : \bbRs \to \bbR_{\geq 0}$ be defined as $K^{-1}(t) = \min\{u : K(u) \leq t\}$.
Then $K^{-1}$ is a non-increasing function with compact support,
and we have, for every $x \in \MPS$,
\[
    \conv{\mu_\MPS}{K}{s}(x) = \int_0^{\infty} \mu_\MPS\left(B(x,sK^{-1}(r))\right)\, \dd r.
\]
\end{lemma}

\begin{proof}
Since $K(r) \to 0$ as $r \to \infty$, for every $t > 0$ the set $\{u : K(u) \leq t\}$ is non-empty.
Moreover, $K$ is continuous from the right, so the set has a minimum, and thus
$K^{-1}$ is well-defined.
The fact that $K^{-1}$ is non-increasing is clear, and the fact that it has
compact support follows from the fact that $K$ is bounded.

To prove the statement about $\conv{\mu_\MPS}{K}{s}$, we need the following straightforward
fact about $K^{-1}$: for every $s,t \in \bbR_{\geq 0}$ we have $K^{-1}(t) > s$ if and only if $t < K(s)$.
We finish the proof by computing
\begin{align*}
    \int_{x' \in \MPS} K\left(\frac{d(x,x')}{s}\right) \dd\mu_\MPS &=
    \int_{x' \in \MPS} \int_0^\infty \mathbf{1}_{\left\{r < K\left(\frac{d(x,x')}{s}\right)\right\}} \dd r\,\dd\mu_\MPS \\
    = \int_{x' \in \MPS} \int_0^\infty \mathbf{1}_{\left\{d(x,x') < s K^{-1}(r)\right\}} \dd r\,\dd\mu_\MPS
    &= \int_0^\infty \int_{x' \in \MPS} \mathbf{1}_{\left\{d(x,x') < s K^{-1}(r)\right\}} \dd\mu_\MPS\,\dd r\\
    &= \int_0^\infty \mu_\MPS\left(B(x,s K^{-1}(r)\right) \dd r,
\end{align*}
as required.
\end{proof}

\subsection{Details from \cref{Stability}} \label{appendix-stability}

\begin{lemma} \label{localdensitybounds}
Let $K$ be a kernel and let $r' \in (0,K(0))$.
Let $Z$ be a compact metric space and let $\mu$ and $\nu$ be Borel probability measures on $Z$
such that $\dP(\mu,\nu) < \epsilon$ for $\epsilon > 0$.
Let $x,y \in Z$ such that $d_Z(x,y) < \epsilon'$.
Then, for all $s > 0$, we have
    $\conv{\mu}{K}{s}(x) \leq \conv{\nu}{K}{s+\epsilon_s}(y) + \epsilon_k$,
for $\epsilon_s = \frac{\epsilon+\epsilon'}{K^{-1}(r')}$
and $\epsilon_k = K(0) \left(\frac{K(0)}{r'} - 1\right) + K(0) \epsilon$.
\end{lemma}

\begin{proof}
Using \cref{formulafordensity}, we know that
$\conv{\mu}{K}{s}(x) = \int_0^{K(0)} \mu(B(x,s K^{-1}(r)))\, \dd r$,
since, if $r > K(0)$, then $K^{-1}(r) = 0$.
Note that, for any radius $R \geq 0$, we have
\[
	\mu \left( B(x,R) \right) 
    \leq  \nu \left( B(x,R)^{\epsilon} \right) + \epsilon
    \leq  \nu \left( B(x,R + \epsilon) \right) + \epsilon
	\leq  \nu \left( B(y,R + \epsilon + \epsilon') \right) + \epsilon,
\]
so we can bound the local density estimate of $x$ as follows:
{\small
\[
    \conv{\mu}{K}{s}(x) \leq \int_0^{K(0)} \nu(B(y,s K^{-1}(r) + \epsilon + \epsilon')) + \epsilon \,\dd r = \int_0^{K(0)} \nu(B(y,s K^{-1}(r) + \epsilon + \epsilon')) \, \dd r + K(0) \epsilon.
\]
}
Since $K^{-1}$ is non-increasing, and $r' < K(0)$,
it follows that
$K^{-1}(r r'/K(0)) \geq K^{-1}(r)$ for every $r \geq 0$.
Moreover, for any $0 \leq r \leq K(0)$, we have $K^{-1}(r r'/K(0)) \geq K^{-1}(r')$.
These two considerations imply that, for $0 \leq r \leq K(0)$, we have
\[
    sK^{-1}(r) + \epsilon + \epsilon' \leq \left(s + (\epsilon + \epsilon')/K^{-1}(r')\right) K^{-1}\left(r r'/K(0)\right).
\]
Combining this with the above bound for the local density estimate of $x$ we get
\begin{align*}
    \conv{\mu}{K}{s}(x) &\leq \int_0^{K(0)} \nu\left(B\left(y,\left(s + (\epsilon + \epsilon')/K^{-1}(r')\right) K^{-1}\left(r r'/K(0)\right)\right)\right) \, \dd r + K(0) \epsilon\\
                  &= \frac{K(0)}{r'} \int_0^{r'} \nu\left(B\left(y,\left(s + (\epsilon + \epsilon')/K^{-1}(r')\right) K^{-1}\left(r \right)\right)\right) \, \dd r + K(0) \epsilon\\
                  &\leq \frac{K(0)}{r'}\,\, \conv{\nu}{K}{\left(s + (\epsilon + \epsilon')/K^{-1}(r')\right)}(y) + K(0) \epsilon.
\end{align*}

Finally, note that, for $0\leq a \leq M < \infty$ and $c \geq 1$, we have $ca \leq a + M(c-1)$.
As $\nu$ is a probability measure, any local density estimate is bounded by $K(0)$.
This implies that
\[
    \conv{\mu}{K}{s}(x) \leq \conv{\nu}{K}{\left(s + (\epsilon + \epsilon')/K^{-1}(r')\right)}(y) + K(0) \left(\frac{K(0)}{r'} - 1\right) + K(0) \epsilon,
\]
as required.
\end{proof}


\begin{proof}
\textbf{(of \cref{RSL-is-GHP-discontinuous})}
We will construct a finite metric space $\Met$ 
such that $\RL_{\kappa, \alpha}$ is not continuous at $\Met$. 
For any $\delta > 0$, 
we will show that there is a finite metric space $\Metalt$ 
with $\dGHP(\Met,\Metalt) < \delta$ such that 
$\dWI(\RL_{\kappa, \alpha}(\Met), \RL_{\kappa, \alpha}(\Metalt)) > 1/2$. 

Let $\Met \subset \bbR$ be a subset with $0, 1 \in \Met$, 
and with $\kappa - 2$ points in the interval $(-\frac{1}{10}, 0)$ 
and $\kappa - 2$ points in the interval $(1, \frac{11}{10})$. 
For $\ell \geq 1$, let $\Met_\ell = \Met \cup \{x + \frac{1}{\ell} : x \in \Met\}$. 
We have $\dGHP(\Met, \Met_\ell) \leq \frac{1}{\ell}$ for all $\ell \geq 1$. 
It remains to show that 
$\dWI(\RL_{\kappa, \alpha}(\Met), \RL_{\kappa, \alpha}(\Met_\ell)) > 1/2$ 
for all sufficiently large $\ell$. 
Note that $\RL_{\kappa, \alpha}(\Met)(r) = \emptyset$ for $r < 1$; 
however, for $\ell \geq 10$, 
$0$ is in a cluster of $\RL_{\kappa, \alpha}(\Met_\ell)(r)$ 
for any $r > 1/10$. This finishes the proof.
\end{proof}

As we described in \cref{Stability}, 
one could also formalize robust single-linkage by 
taking the density threshold parameter to be a ratio $k \in (0,1)$, 
and then letting $\RL_{k, \alpha}(\Met) = \mml(\Met)^{\gamma}$ 
for the covariant curve 
$\gamma \colon (0, \infty) \to \bbRs^{\times 3}$ with 
$\gamma(r) = (r, \alpha r, k)$. 
This variant also fails to be continuous 
with respect to the Gromov--Hausdorff--Prokhorov distance:

\begin{proposition} \label{RSL-k-is-GHP-discontinuous}
Let $k \in (0,1)$ be rational, and let $\alpha > 0$. 
With respect to the Gromov--Hausdorff--Prokhorov distance 
and the correspondence-interleaving distance, 
$\RL_{k, \alpha}$ is discontinuous.
\end{proposition}

\begin{proof}
Write $k = p/q$. 
Without loss of generality, we may assume $p \geq 2$. 
We will construct a finite metric space $\Met$ such that 
$\RL_{k, \alpha}$ is not continuous at $\Met$. 
Let $\Met \subset \bbR$ be a subset with $0 \in \Met$, 
with $|\Met \cap (-\frac{1}{10}, 0)| = p-1$ 
and with $1,2, \dots, q-p \in \Met$. 
For $n \geq 1$, let
\[
	\Met_n = \left( \Met \cup \Met + \frac{1}{n^2} \cup \dots 
	\cup \Met + \frac{n-1}{n^2} \right) \setminus \left\{\frac{n-1}{n^2}\right\} \, ,
\]
where $\Met + a = \{x + a : x \in \Met\}$. 
The idea is that we replace each point of $\Met$ 
with $n$ points that are tightly grouped together, 
except $0$, which we replace with only $n-1$ points 
(hence we remove the point $\frac{n-1}{n^2}$). 
We have $\dGHP(\Met, \Met_n) \to 0$ as $n \to \infty$. 
We have $0$ in a cluster of $\RL_{k, \alpha}(\Met)(r)$ 
for any $r > 1/10$; 
however, for $n$ sufficiently large, 
$\RL_{k, \alpha}(\Met_n)(r) = \emptyset$ 
for any $r \leq 1/2$. 
This shows that 
$\dWI(\RL_{k, \alpha}(\Met), \RL_{k, \alpha}(\Met_n)) > 4/10$ 
for sufficiently large $n$, 
finishing the proof.
\end{proof}

\begin{proof}
\textbf{(of \cref{RSL-is-not-Lipschitz-in-kappa})}
For simplicity, we assume $\kappa' = \kappa + 1$, 
but the construction can easily be extended to the general case.  
Let $\Met \subset \bbR$ consist of $\kappa$ points in the interval $(-1,0)$, 
as well as the point $D + 2$.  
Let $x \in \Met \cap (-1, 0)$. 
Then $x$ is in a cluster of $\RL_{\kappa, \alpha}(\Met)(r)$ for any $r > 1$, 
but $\RL_{\kappa', \alpha}(\Met)(r) = \emptyset$ for all $r \leq D + 2$.
\end{proof}

We also have the analogue of \cref{RSL-is-not-Lipschitz-in-kappa} 
for the variant of robust single-linkage 
that takes a density threshold $k \in (0,1)$ instead of $\kappa$:

\begin{proposition} \label{RSL-is-not-Lipschitz-in-k}
Let $k, k' \in (0,1)$ be rational with $k \neq k'$, 
and let $\alpha > 0$. 
For any $D > 0$, there is a finite metric space $\Met$ such that 
$\dWI(\RL_{k, \alpha}(\Met), \RL_{k', \alpha}(\Met)) > D$. 
\end{proposition}

\begin{proof}
The construction of $\Met$ is similar to the proof of \cref{RSL-k-is-GHP-discontinuous}. 
Write $k = p/q$. 
Without loss of generality, we may assume $p \geq 2$, 
and $k < k'$. 
Let $\Met \subset \bbR$ be a subset with $0 \in \Met$, 
with $|\Met \cap (-\frac{1}{2}, 0)| = p-1$, 
and with $D+1, 2(D+1), \dots, q-p(D+1) \in \Met$. 
If $s \geq 1/2$, then $0$ is in a cluster of $\RL_{k, \alpha}(\Met)$. 
However, if $s < D + 1$, then $\RL_{k', \alpha}(\Met) = \emptyset$, 
and the proposition follows.
\end{proof}

\begin{proof}
\textbf{(of \cref{RG-is-GHP-discontinuous})}
Let $\Met = \{0, t\} \subset \bbR$. 
Say $\RG$ is defined using the kernel $K$. 
Because we always use isotropic kernels, 
we have $\conv{\mu_\Met}{K}{s}(0) = \conv{\mu_\Met}{K}{s}(t) =: M$. 
So, $\RG_{s,t}(\Met)(r) = \emptyset$ if $r > M$ 
and $\RG_{s,t}(\Met)(r) = \{\Met\}$ if $r \leq M$. 
Now, for any $\epsilon > 0$, let $\Met_{\epsilon} = \{0, t+\epsilon\}$. 
For any correspondence $R$ between $\Met$ and $\Met_{\epsilon}$, 
we have $\pi_\Met^*(\RG_{s,t}(\Met))(r) = \{R\}$ for $r \leq M$, 
but $\pi_{\Met_{\epsilon}}^*(\RG_{s,t}(\Met_{\epsilon}))(r) \neq \{R\}$ 
for any $r > 0$. So, for any $0 \leq \delta < M$, 
$\RG_{s,t}(\Met)$ and $\RG_{s,t}(\Met_{\epsilon})$ are not 
$\delta$-interleaved with respect to $R$, 
and thus $\dWI(\RG_{s,t}(\Met), \RG_{s,t}(\Met_{\epsilon})) > \delta$. 
But, as $\epsilon \to 0$, we have $\dGHP(\Met, \Met_{\epsilon}) \to 0$.
\end{proof}

\begin{proof}
\textbf{(of \cref{RG-is-discontinuous-in-t})} 
Say $\RG$ is defined using the kernel $K$. 
Let $s > 0$ be arbitrary, let 
$t = \min_{x \neq x'} d_\Met(x,x')$, 
and let $x_0, x_1 \in \Met$ be such that $d_\Met(x_0, x_1) = t$. 
If $t' < t$, then $\RG_{s,t'}(\Met)(r)$ consists of singletons for all $r > 0$, 
while $\RG_{s,t}(\Met)(r)$ has a cluster containing $x_0$ and $x_1$ 
for all $r \leq \min(\conv{\mu_\Met}{K}{s}(x_0), \conv{\mu_\Met}{K}{s}(x_1))$. 
So, $\dWI(\RG_{s,t}(\Met), \RG_{s,t'}(\Met))$ does not go to zero 
as $t' \to t$ from below.
\end{proof}

%
%

\subsection{Details from \cref{Consistency}}
\label{appendix}

In order to prove that CI-consistency implies Hartigan consistency, we need a lemma, 
which is similar to \citet[Lemma 14, Appendix: Consistency]{chaudhuri-dasgupta-10}, 
except that we do not require super-level sets to have finitely many connected components.

\begin{lemma}\label{tamenessofHf}
Let $f : \bbR^d \to \bbR$ be a continuous, compactly supported probability density function, 
let $r > 0$ and $A \neq A' \in H(f)(r)$.
There exists $\epsilon > 0$ and $B, B' \in H(f)(r - \epsilon)$ with $B \neq B'$ such that $A \subseteq B$ and $A' \subseteq B'$.
\end{lemma}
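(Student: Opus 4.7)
My plan is to produce, for some $\epsilon > 0$, a clopen subset of the superlevel set $S_{r-\epsilon} := \{x \in \bbR^d : f(x) \geq r-\epsilon\}$ that contains $A$ and is disjoint from $A'$; the connected components $B, B'$ of $S_{r-\epsilon}$ containing $A$ and $A'$ must then be distinct, and since they necessarily contain $A$ and $A'$ respectively (both $A$ and $A'$ are connected subsets of $S_{r-\epsilon}$), they give the clusters required by the statement.

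First, since $f$ is continuous and compactly supported and $r > 0$, the superlevel set $S_r$ is compact. The clusters $A$ and $A'$ are connected components of $S_r$, hence closed in $S_r$ and in turn compact. Let $K = S_r \setminus A$, which is closed in $S_r$, hence compact, and which contains $A'$. Because $A$ and $K$ are disjoint compact subsets of $\bbR^d$, we have $\delta := d(A, K)/2 > 0$. Define the compact ``tube'' $N := \{x \in \bbR^d : d(x, A) \leq \delta\}$ and the compact ``shell'' $\Sigma := \{x \in \bbR^d : d(x, A) = \delta\}$. By construction $N \cap K = \emptyset$, so $N \cap S_r = A$; in particular $f(x) < r$ for every $x \in \Sigma$.

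Continuity of $f$ together with compactness of $\Sigma$ yields a maximum of $f$ on $\Sigma$ that is strictly less than $r$, so we may choose $\epsilon \in (0, r)$ with $\max_\Sigma f < r - \epsilon$. Then $\Sigma \cap S_{r-\epsilon} = \emptyset$, and consequently the set $S_{r-\epsilon} \cap N$ coincides with $S_{r-\epsilon} \cap \{x : d(x, A) < \delta\}$. Thus $S_{r-\epsilon} \cap N$ is simultaneously closed and open in $S_{r-\epsilon}$; it contains $A$, and since $A' \subseteq K$ is disjoint from $N$, it is disjoint from $A'$.

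Letting $B$ and $B'$ be the connected components of $S_{r-\epsilon}$ containing $A$ and $A'$ respectively (these exist and are unique because $A$ and $A'$ are themselves connected and contained in $S_{r-\epsilon}$), the clopen nature of $S_{r-\epsilon} \cap N$ forces each of $B, B'$ to lie entirely inside or entirely outside it; since $A \subseteq N$ but $A' \cap N = \emptyset$, we conclude $B \subseteq N$ while $B' \cap N = \emptyset$, and therefore $B \neq B'$. The only subtlety in this argument is the choice of $\Sigma$ as a metric sphere rather than a topological boundary, which is what lets compactness of $\Sigma$ and the strict inequality $f < r$ on $\Sigma$ combine via continuity into the uniform gap $\epsilon$; everything else is routine point-set topology.
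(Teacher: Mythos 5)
Your argument has a genuine gap at the step where you set $K = S_r \setminus A$ and assert that it is closed in $S_r$, hence compact, so that $d(A,K) > 0$. A connected component of a space is always closed, but it need not be open, so its complement need not be closed; worse, the distance from $A$ to the union of the other components can be zero. Concretely, take $d = 1$ and a continuous, compactly supported density $f$ with $f(0) = r$, $f(1/n) = r$ for all $n \in \bbN$, and $f < r$ elsewhere (narrow bumps of height exactly $r$ accumulating at $0$). Then $S_r = \{0\} \cup \{1/n : n \in \bbN\}$, the component $A = \{0\}$ is not open in $S_r$, the set $K = \{1/n : n \in \bbN\}$ is not closed, and $d(A,K) = 0$, so your $\delta$ vanishes and the tube $N$ and shell $\Sigma$ cannot be formed. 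The conclusion of the lemma still holds in this example (take $A' = \{1\}$ and $\epsilon$ smaller than the depth of the dip of $f$ separating the bump at $1$ from the rest), so it is only this step, not the statement, that is at fault.

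The strategy is salvageable. Since $S_r$ is compact Hausdorff, its connected components coincide with its quasi-components, so the distinct components $A$ and $A'$ can be separated by a partition $S_r = U \sqcup V$ into sets that are clopen in $S_r$ with $A \subseteq U$ and $A' \subseteq V$; both $U$ and $V$ are then compact, so $d(U,V) > 0$, and running your tube-and-shell argument around $U$ instead of around $A$ goes through verbatim. Alternatively, and this is the shorter route the paper is gesturing at (following Chaudhuri--Dasgupta): the sets $S_{r-\epsilon}$ are compact and decrease to $S_r$ as $\epsilon \downarrow 0$ by continuity of $f$; if the components $B_\epsilon \supseteq A$ and $B'_\epsilon \supseteq A'$ of $S_{r-\epsilon}$ coincided for every $\epsilon > 0$, then $\bigcap_{\epsilon > 0} B_\epsilon$ would be a nested intersection of compact connected sets, hence connected, contained in $S_r$, and containing both $A$ and $A'$, contradicting $A \neq A'$.
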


\begin{proof}
Say, towards a contradiction, that for all $n$ with $1/n < r$, there is $B_n \in H(f)(r - 1/n)$ 
with $A \subset B_n$ and $A' \subset B_n$. 
It is a standard fact that if $K_1 \supseteq K_2 \supseteq \cdots$ 
is a nested sequence of non-empty, compact, connected sets in Euclidean space, 
then the intersection $\cap_{i=1}^{\infty} K_i$ is connected \cite[10.1.23]{csaszar}. 
So, the intersection $B = \cap B_n$ is connected. 
For all $b \in B$, we have $f(b) \geq r - 1/n$ for all $n$ large enough, 
so we must have $f(b) \geq r$. 
As $B$ is contained in $\{ f \geq r \}$ and $B$ is connected, 
$B$ must intersect only one connected component of $\{ f \geq r \}$, 
but we have $A \subset B$ and $A' \subset B$, a contradiction.
\end{proof}

\begin{proof}
\textbf{(of \cref{CI-implies-Hartigan})}
    Given $r > 0$ and distinct elements $A$ and $A'$ of $H(f)(r)$,     
    we show that the probability of $A_n \cap A'_n = \emptyset$
    goes to $1$ as $n \to \infty$,
    where $A_n$ is the smallest cluster in $\mathbb{A}^{\theta_n}(X_n)$ that contains
    $A \cap X_n$ and likewise for $A'$, 
    and the $\theta_n$ are the parameters whose existence is given by CI-consistency of $\mathbb{A}$.

    From \cref{tamenessofHf} it follows that
    there exists $\epsilon > 0$ and distinct elements $B,B' \in H(f)(r-\epsilon)$
    such that $A \subseteq B$ and $A' \subseteq B'$.    
    Let $\delta \in (0,1)$. By assumption, there exists $N$ such that, if $n\geq N$,
    then the probability that $\mathbb{A}^{\theta_n}(X_n)$ and $H(f)$
    are $\epsilon/2$-interleaved with respect to the closest point correspondence $R_c \subseteq X_n \times \support(f)$
    is greater than $1-\delta$.
    As $R_c$ contains the pairs $(x,x)$ for $x \in X_n$, 
	if $\mathbb{A}^{\theta_n}(X_n)$ and $H(f)$
    are $\epsilon/2$-interleaved with respect to $R_c$, then 
    $\mathbb{A}^{\theta_n}(X_n)$ and $i^*(H(f))$ are $\epsilon/2$-interleaved as hierarchical clusterings of $X_n$, 
    where $i : X_n \to \support(f)$ is the inclusion. 
    It is therefore enough to show that if 
    $\mathbb{A}^{\theta_n}(X_n)$ and $i^*(H(f))$ are $\epsilon/2$-interleaved, then $A_n \cap A'_n =\emptyset$. 
    Now, if $\mathbb{A}^{\theta_n}(X_n)$ and $i^*(H(f)))$ are $\epsilon/2$-interleaved,
    then there exist $C,C' \in \mathbb{A}^{\theta_n}(X_n)(r-\epsilon/2)$
    such that $A\cap X_n \subseteq C \subseteq B$ and $A'\cap X_n \subseteq C' \subseteq B'$.
    As $A_n \subseteq C$ and $A'_n \subseteq C'$, and $B \cap B' = \emptyset$, 
    we have $A_n \cap A'_n = \emptyset$.
\end{proof}

We now prove \cref{consistency-lambda-linkage}, 
the CI-consistency of $\linkage{\overline{\lambda}}$. 
Because the argument works in greater generality, 
we actually prove this for any ``admissible family'' of curves 
(this is \cref{consistency-gamma-linkage}). 
The curves $\{\lambda\con^{x,y}\}_{x,y > 0}$ from \cref{consistency-lambda-linkage} 
will be an example of an admissible family. 
We will use the following curves in order to define slices of kernel linkage.

\begin{definition}
A \define{slicing curve} consists of an interval $I_{\gamma} = (0, \max_{\gamma})$ 
with $\max_{\gamma} \in (0, \infty]$, 
and an order-preserving function 
$\gamma = (\gamma_s,\gamma_t,\gamma_k) : I_{\gamma}^{\op} \to \bbRs \times \bbRs \times \bbRs^{\op}$, 
which we assume is continuous when viewed as a map between subspaces of Euclidean space. 
For any slicing curve $\gamma$ and any metric probability space $\MPS$, 
we write $\linkage{\gamma}(\MPS) = \mml(\MPS)^{\gamma}$.
\end{definition}

As in \cref{phi-and-rescaling-lambda}, we have to re-parameterize 
slices of kernel linkage in order to interleave them with the density-contour hierarchical clustering. 
We do this as follows.


\begin{definition} \label{phi-and-rescaling-gamma}
Let $K$ be a kernel, 
and let $\gamma$ be a slicing curve. 
For $s > 0$, we write $v_s = \int_{\bbR^d} K(||x||/s) \dd x$. 
Define an order-preserving function $\varphi : I_\gamma \to \bbRs$ by 
$\varphi(r) = \gamma_k(r) / v_{\gamma_s(r)}$. 
We say that $\gamma$ is \define{covering} if
$\gamma_s$ and $\gamma_k$ are injective, $\gamma_s(r) \to 0$ as $r \to \max_\gamma$, 
and $\gamma_k(r) \to 0$ as $r \to 0$. 
If $\gamma$ is covering, then $\varphi$ is a bijection. 
In that case, we write 
$\overline{\gamma} = \gamma \circ \varphi^{-1} : \bbRs^{\op} \to \bbRs \times \bbRs \times \bbRs^{\op}$. 
If $\gamma$ is a slicing curve that is covering, 
then $\overline{\gamma}$ is also a slicing curve. 
\end{definition}

From now on, fix a continuous, compactly-supported density function $f : \bbR^d \to \bbR$ 
with support $\support(f)$. 

\begin{notation}
For $s > 0$, define $f_s : \support(f) \to \bbR$ by
\[
    f_s(x) = \int_{\bbR^d} K\left(\frac{||x-y||}{s}\right) f(y) \; \dd y.
\]
\end{notation}

Note that $f_s(x) = \conv{f}{K}{s}(x) = \conv{\mu_f}{K}{s}(x)$.
As $\support(f)$ is compact, the continuous function $f$ is uniformly continuous.
An elementary consequence (see e.g. \citealp[Theorem~8.14]{folland}) 
is that $f_s / v_s$ approximates $f$ for small enough $s$:

\begin{lemma} \label{uniform-convergence-to-f}
    Given $\epsilon > 0$ there exists $\delta > 0$
    such that if $s < \delta$
    then $||f_s/ v_s - f||_\infty < \epsilon$.
\end{lemma}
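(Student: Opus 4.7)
The plan is to establish the lemma as a standard approximate-identity argument. The first step is to recognize the rescaled kernel $K_s(z) := K(\|z\|/s)/v_s$ as a probability density on $\bbR^d$ (by the very definition of $v_s$), so that, after the change of variable $z = x - y$ in the definition of $f_s$,
\[
    \frac{f_s(x)}{v_s} - f(x) \;=\; \int_{\bbR^d} K_s(z)\,\bigl(f(x-z) - f(x)\bigr)\dd z.
\]
This reduces the lemma to controlling the integral of $|f(x-z) - f(x)|$ weighted by the probability density $K_s$.

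Next, after the rescaling $z = s w$, one obtains $v_s = s^d v_1$ with $v_1 = \int_{\bbR^d} K(\|w\|) \dd w$, and for any fixed $\eta > 0$,
\[
    \int_{\|z\| \geq \eta} K_s(z) \dd z \;=\; \frac{1}{v_1} \int_{\|w\| \geq \eta/s} K(\|w\|) \dd w \;\xrightarrow{s \to 0^+}\; 0,
\]
by dominated convergence: the integrand is dominated by the fixed $L^1(\bbR^d)$ function $K(\|\cdot\|)$ and vanishes pointwise because $\eta/s \to \infty$.

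Now fix $\epsilon > 0$. Since $f$ is continuous and compactly supported, it is uniformly continuous and bounded, say by $M = \|f\|_\infty$. Choose $\eta > 0$ so that $\|z\| < \eta$ implies $|f(x-z) - f(x)| < \epsilon/2$ for every $x \in \bbR^d$. Splitting the integral at $\|z\| = \eta$ and using $\int K_s = 1$ on the near piece yields
\[
    \left|\frac{f_s(x)}{v_s} - f(x)\right| \;\leq\; \frac{\epsilon}{2} \;+\; 2M \int_{\|z\| \geq \eta} K_s(z) \dd z \;\leq\; \frac{\epsilon}{2} \;+\; \frac{2M}{v_1}\int_{\|w\| \geq \eta/s} K(\|w\|) \dd w.
\]
By the previous paragraph, choosing $\delta > 0$ sufficiently small ensures the second term is below $\epsilon/2$ whenever $s < \delta$, and the bound is uniform in $x$.

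The argument is essentially routine---this is exactly the standard convergence-of-approximate-identity argument the authors cite via Folland Theorem~8.14, so there is no real obstacle. The only small tacit assumption is that $v_s < \infty$ (equivalently $K(\|\cdot\|) \in L^1(\bbR^d)$), which is needed for the normalization $K_s$ to make sense; this integrability on radial profiles in $\bbR^d$ is implicit throughout the paper and is automatic when $K$ has compact support, as for the uniform kernel.
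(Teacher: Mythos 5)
Your proof is correct and is essentially the same argument the paper relies on: the paper gives no proof of its own but cites the standard approximate-identity theorem (Folland, Theorem~8.14), whose proof is exactly your near/far splitting using uniform continuity of $f$ and the vanishing tail mass of the rescaled kernel $K_s$. Your caveat about needing $v_s = \int_{\bbR^d} K(\|x\|/s)\dd x < \infty$ (which does not follow from the paper's one-dimensional integrability condition $\int_0^\infty K(r)\dd r < \infty$ when $d \geq 2$) is well spotted and consistent with what the paper tacitly assumes in defining $\varphi(r) = \gamma_k(r)/v_{\gamma_s(r)}$.
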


\begin{notation}
Let $I \subseteq \bbRs^{\op}$ be an interval, 
and let $H : I \to \C(X)$ be a contravariant hierarchical clustering of a set $X$. 
We write $\height_H : X \to [0,\infty]$ for the function defined by 
$\height_H(x) = \sup\{r \in I \; : \; \exists C \in H(r), x \in C\}$.
\end{notation}

\begin{notation}
We write $\mml(f)$ for the kernel linkage of 
the metric probability space $(\support(f), \mu_f)$. 
For any slicing curve $\gamma$, 
we write $h^{\gamma} = \height_{\mml(f)^{\gamma}}$. 
If $\gamma$ is covering, 
we write $h^{\overline{\gamma}} = \height_{\mml(f)^{\overline{\gamma}}}$.
\end{notation}

\begin{lemma} \label{consistent-curve-consistent-height}
    Let $\gamma$ be a slicing curve that is covering.
    Then, for any $x \in \support(f)$, the quantity
    $h^\gamma(x)$ satisfies $f_{\gamma_s(h^\gamma(x))}(x) = \gamma_k(h^\gamma(x))$.
    And, there exists $r_1 \in I_\gamma$ such that $h^\gamma(x) < r_1$ for every $x \in \support(f)$. 
\end{lemma}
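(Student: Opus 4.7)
My plan is to reduce both claims to analyzing the scalar function $g_x(r) := f_{\gamma_s(r)}(x) - \gamma_k(r)$ on $r \in I_\gamma$, and then to apply the intermediate value theorem and Dini's theorem. The local density estimate of $\mu_f$ at $x$ at scale $s$ is exactly $f_s(x)$, so $x$ belongs to some cluster of $\mml(f)^\gamma(r)$ if and only if $f_{\gamma_s(r)}(x) \geq \gamma_k(r)$, i.e.\ $g_x(r) \geq 0$. Hence $h^\gamma(x) = \sup\{r \in I_\gamma : g_x(r) \geq 0\}$, and everything reduces to analyzing $g_x$.

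Next I would establish three properties of $g_x$. Continuity in $r$ follows from continuity of $\gamma_s$ and $\gamma_k$ together with continuity of $s \mapsto f_s(x)$, the latter coming from the representation $f_s(x) = \int_0^{K(0)} \mu_f(B(x,sK^{-1}(u)))\,du$ of \cref{formulafordensity} and dominated convergence. The same representation shows $f_s$ is non-decreasing in $s$, so combined with $\gamma_s$ strictly decreasing and $\gamma_k$ strictly increasing (by the covering hypothesis), $g_x$ is strictly decreasing. For the sign changes: near $r = 0$, the covering hypothesis gives $\gamma_k(r) \to 0$, while $f_{\gamma_s(r)}(x) \geq f_{\gamma_s(r_0)}(x) > 0$ for $r < r_0$ (positivity for $x \in \support(f)$ and $s > 0$ uses continuity of $f$ together with $K(0) > 0$); near $\max_\gamma$, $\gamma_s(r) \to 0$ and $f_s(x) \to 0$ as $s \to 0$ by dominated convergence on the above representation (using that $\mu_f$ is atomless), while $\gamma_k(r) \geq \gamma_k(r_0) > 0$ for any fixed $r_0 < \max_\gamma$. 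Combining these with the intermediate value theorem produces a unique zero $r^*_x \in (0,\max_\gamma)$ of $g_x$, and this must equal $h^\gamma(x)$, giving the identity in the first assertion.

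For the uniform upper bound, strict monotonicity of $g_x$ reduces the task to producing $r_1 < \max_\gamma$ with $g_x(r_1) < 0$ for every $x \in \support(f)$; since $\gamma_k(r_1)$ stays bounded below by a positive constant as $r_1$ varies in any interval $[r_0,\max_\gamma)$, this in turn reduces to showing $\sup_{x \in \support(f)} f_s(x) \to 0$ as $s \to 0$. I would obtain this uniform convergence via Dini's theorem: $\support(f)$ is compact, each $f_s$ is continuous on it (by dominated convergence in $x$), the family $\{f_s\}_{s > 0}$ is monotone in $s$, and $f_s(x) \downarrow 0$ pointwise as $s \to 0^+$. The main obstacle is this Dini step---in particular, verifying pointwise convergence and continuity of each $f_s$---but once the integral representation from \cref{formulafordensity} is in place, everything follows routinely.
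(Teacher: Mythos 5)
Your proof is correct, and the core of it---reducing to the scalar function $g_x(r)=f_{\gamma_s(r)}(x)-\gamma_k(r)$, checking it is continuous and strictly decreasing (non-increasing first term minus strictly increasing second term), locating a sign change at both ends of $I_\gamma$, and applying the intermediate value theorem to identify $h^\gamma(x)$ with the unique zero---is exactly the paper's argument. The one place you genuinely diverge is the uniform bound $r_1$. The paper gets uniformity in $x$ for free from the pointwise estimate $f_s(x)\leq v_s\cdot\max(f)$, where $v_s=\int_{\bbR^d}K(\lVert z\rVert/s)\dd z\to 0$ as $s\to 0$; no equicontinuity or compactness argument is needed beyond boundedness of $f$. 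Your route instead proves $\sup_x f_s(x)\to 0$ via Dini's theorem, which requires the extra verifications you flag (continuity of each $f_s$ on $\support(f)$ and pointwise monotone convergence to $0$, both of which do hold here since $\mu_f$ is absolutely continuous and the discontinuity set of $K$ is countable). This is heavier machinery for the same conclusion, though it has the mild virtue of not invoking finiteness of $v_s$---a quantity the paper needs anyway to define $\varphi$ in \cref{phi-and-rescaling}. Either way the step closes, so there is no gap; if you wanted to streamline, replacing the Dini argument with the $v_s\cdot\max(f)$ bound would shorten the proof.
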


\begin{proof}
	Note first that, for any $x \in \support(f)$, 
	$h^{\gamma}(x) = \sup \, \{ r \in I_{\gamma} : f_{\gamma_s(r)}(x) \geq \gamma_k (r) \}$. 
    We begin the proof by showing the following: 
    there is $r_1 \in I_\gamma$ such that 
    $\{x \in \support(f) : f_{\gamma_s(r_1)}(x) \geq \gamma_k(r_1)\} = \emptyset$,
    and there is $r_0 \in I_\gamma$ such that 
    $\{x \in \support(f) : f_{\gamma_s(r_0)}(x) \geq \gamma_k(r_0)\} = \support(f)$.
    For the existence of $r_1$, note that $f_{\gamma_s(r)}(x) \leq v_{\gamma_s(r)} \cdot \max(f)$ 
	for all $x \in \support(f)$ and $r \in I_{\gamma}$. 
    So $r_1$ exists, since, as $r \to \max_\gamma$, we have 
    $v_{\gamma_s(r)} \cdot \max(f) \to 0$ while $\gamma_k(r)$ is increasing.
    For the existence of $r_0$ note that, for every $r \in I_\gamma$, the function
    $f_{\gamma_s(r)}(x)$ is continuous in $x$ and strictly positive for every $x \in \support(f)$, 
    so we have $\min(f_{\gamma_s(r)}(x)) > 0$ for any $r \in I_\gamma$. 
    So $r_0$ exists since, as $r \to 0$, we have 
    $\gamma_k(r) \to 0$ while $\min(f_{\gamma_s(r)}(x))$ is increasing.

    Now, the function $f_{\gamma_s(r)}(x)$ is decreasing and continuous in $r$,
    and $\gamma_k(r)$ is continuous and strictly increasing in $r$. Since
    $f_{\gamma_s(r_0)}(x) \geq \gamma_k(r_0)$ and $f_{\gamma_s(r_1)}(x) < \gamma_k(r_1)$,
    we have that $h^\gamma(x)$ is the unique number $r \in [r_0,r_1]$
    such that $f_{\gamma_s(r)}(x) = \gamma_k(r)$, as required.
\end{proof}

\begin{lemma} \label{height-approximates-density}
Let $\epsilon > 0$, and let $\gamma$ be a slicing curve that is covering.
There is $\delta > 0$
such that, if $\gamma_s(r) < \delta$ for every $r \in I_\gamma$, 
then $||h^{\overline{\gamma}} - f||_\infty < \epsilon$.
\end{lemma}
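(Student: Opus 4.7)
The plan is to deduce the claim from the pointwise uniform convergence in \cref{uniform-convergence-to-f} together with the identity characterizing $h^{\overline{\gamma}}(x)$ given by \cref{consistent-curve-consistent-height} applied to the rescaled curve $\overline{\gamma}$. The key observation is that the rescaling $\overline{\gamma} = \gamma \circ \varphi^{-1}$ from \cref{phi-and-rescaling} is designed precisely so that $\overline{\gamma}_k(r) = r \cdot v_{\overline{\gamma}_s(r)}$ for every $r$ in the domain; this is what turns the raw density estimate $f_{s}(x)$ into the normalized kernel density estimate $f_s(x)/v_s$ that is known to approximate $f$.

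First I would verify that $\overline{\gamma}$ is itself a covering curve in the $(1,1,-1)$-parameter space, so that \cref{consistent-curve-consistent-height} is available for $\overline{\gamma}$. This reduces to checking that $\varphi$ is a strictly increasing continuous bijection onto its image: $\gamma_k$ is strictly increasing, $\gamma_s$ is strictly decreasing, and $v_s$ is increasing in $s$, so $v_{\gamma_s(r)}$ is decreasing in $r$, and $\varphi = \gamma_k/v_{\gamma_s}$ is strictly increasing, with the correct limits at the endpoints of $I_\gamma$ inherited from those of $\gamma_k$ and $\gamma_s$. Then $\overline{\gamma}_s = \gamma_s \circ \varphi^{-1}$ is strictly decreasing and $\overline{\gamma}_k = \gamma_k \circ \varphi^{-1}$ is strictly increasing, with the required boundary behavior.

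Next I would apply \cref{consistent-curve-consistent-height} to $\overline{\gamma}$, which yields, for every $x \in \support(f)$, the identity $f_{\overline{\gamma}_s(h^{\overline{\gamma}}(x))}(x) = \overline{\gamma}_k(h^{\overline{\gamma}}(x))$. Writing $s(x) := \overline{\gamma}_s(h^{\overline{\gamma}}(x))$ and combining with the defining relation $\overline{\gamma}_k(r) = r \cdot v_{\overline{\gamma}_s(r)}$ gives the crucial rewriting
\[
    h^{\overline{\gamma}}(x) = \frac{f_{s(x)}(x)}{v_{s(x)}}.
\]

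Given $\epsilon > 0$, I would then choose $\delta > 0$ as supplied by \cref{uniform-convergence-to-f} so that $\|f_s/v_s - f\|_\infty < \epsilon$ whenever $s < \delta$. The hypothesis that $\gamma_s(r) < \delta$ for every $r \in I_\gamma$ forces $s(x) < \delta$ uniformly in $x$, because $s(x) = \gamma_s(\varphi^{-1}(h^{\overline{\gamma}}(x)))$ and $\varphi^{-1}(h^{\overline{\gamma}}(x)) \in I_\gamma$. Plugging into the displayed identity yields $|h^{\overline{\gamma}}(x) - f(x)| < \epsilon$ for every $x$, as required. The only non-routine point is the first step: unwinding the definition of $\overline{\gamma}$ to see that the threshold condition defining $h^{\overline{\gamma}}(x)$ is exactly $f_{s(x)}(x)/v_{s(x)} = h^{\overline{\gamma}}(x)$. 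Once that identity is in place, the conclusion is a direct consequence of \cref{uniform-convergence-to-f}.
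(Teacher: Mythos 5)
Your proof is correct and takes essentially the same route as the paper: both reduce the claim to the identity $h^{\overline{\gamma}}(x) = f_{s}(x)/v_{s}$ with $s = \gamma_s(h^\gamma(x)) = \overline{\gamma}_s(h^{\overline{\gamma}}(x))$, and then invoke \cref{uniform-convergence-to-f}. The only (cosmetic) difference is that the paper obtains this identity by applying \cref{consistent-curve-consistent-height} to $\gamma$ and composing with $\varphi$ via $h^{\overline{\gamma}} = \varphi \circ h^{\gamma}$, whereas you apply it directly to $\overline{\gamma}$ after checking that $\overline{\gamma}$ is covering and that $\overline{\gamma}_k(r) = r\cdot v_{\overline{\gamma}_s(r)}$.
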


\begin{proof}
    Using \cref{uniform-convergence-to-f},
    let $\delta$ be such that if $s < \delta$, then, for all $x \in \support(f)$, we have
    $|f_s(x)/v_s - f(x)| < \epsilon$.
    By definition of $\overline{\gamma}$,
    we have $h^{\overline{\gamma}}(x) = \varphi(h^{\gamma}(x))$.
    Using \cref{consistent-curve-consistent-height}, this implies that, 
    for all $x \in \support(f)$,
    \[
        h^{\overline{\gamma}}(x) = \varphi(h^{\gamma}(x)) =
        \frac{\gamma_k(h^\gamma(x))}{v_{\gamma_s(h^\gamma(x))}} = 
        \frac{f_{\gamma_s(h^\gamma(x))}(x)}{v_{\gamma_s(h^\gamma(x))}},
    \]
    So, if $\gamma_s(r) < \delta$ for every $r \in I_\gamma$, 
    then $|h^{\overline{\gamma}}(x) - f(x)| < \epsilon$
    as $\gamma_s(h^\gamma(x)) < \delta$.
\end{proof}

Now, let $T$ be a topological space, 
and let $\mathcal{U} = \{U_i\}_{i=1}^n$ be an open cover of $T$, 
with $U_i \neq \emptyset$ for all $i$. 
Consider the graph $G_{\mathcal{U}}$ 
with vertex set $\{1, \dots, n\}$, and with an edge $(i, j)$ if 
$U_i \cap U_j \neq \emptyset$. 

\begin{lemma} \label{cech-is-connected}
If $T$ is a connected topological space, 
and $\mathcal{U} = \{U_i\}_{i=1}^n$ is a finite open cover of $T$ 
with $U_i \neq \emptyset$ for all $i$, 
then the graph $G_{\mathcal{U}}$ is connected.
\end{lemma}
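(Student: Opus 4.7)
The plan is to argue by contradiction using the standard open-separation characterization of connectedness. Suppose $G_{\mathscr{U}}$ is disconnected. Then its vertex set $\{0,\dots,n\}$ partitions into two non-empty disjoint subsets $I$ and $J$ such that no edge of $G_{\mathscr{U}}$ joins a vertex of $I$ to one of $J$. By definition of $G_{\mathscr{U}}$, this means $U_i \cap U_j = \emptyset$ whenever $i \in I$ and $j \in J$.

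Next, I would assemble these into two open sets in $T$. Let $V = \bigcup_{i \in I} U_i$ and $W = \bigcup_{j \in J} U_j$. Both are open (arbitrary unions of opens), both are non-empty (each individual $U_i$ is non-empty by hypothesis and $I$, $J$ are non-empty), and together they cover $T$ because $\mathscr{U}$ does. The pairwise disjointness of $U_i$ and $U_j$ for $i \in I$, $j \in J$ immediately gives $V \cap W = \emptyset$. Hence $\{V, W\}$ is a separation of $T$ into two non-empty disjoint open sets, contradicting the connectedness of $T$. Therefore $G_{\mathscr{U}}$ must be connected.

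There is essentially no obstacle here; the only thing to be careful about is the translation between ``no edge between $I$ and $J$'' and ``the corresponding unions are disjoint,'' which is immediate from the definition of $G_{\mathscr{U}}$. The finiteness of $\mathscr{U}$ is not actually used in the argument and the same proof works for arbitrary open covers consisting of non-empty sets.
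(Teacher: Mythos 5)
Your proof is correct, and since the paper states this lemma without proof (it is the standard separation argument one would expect), there is nothing to compare against; your contradiction via the open separation $V=\bigcup_{i\in I}U_i$, $W=\bigcup_{j\in J}U_j$ is exactly the intended argument. Your observation that finiteness is not needed is also accurate.
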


\begin{proof}
We use induction on $n$. We assume the statement for $n-1$, and prove it for $n$. 
If $U_1 \cap U_i = \emptyset$ for all $1 < i \leq n$, 
then we can write $T = U_1 \sqcup (\cup_{i=2}^n U_i)$, 
contradicting the assumption that $T$ is connected. 
So, we can choose $1 < j \leq n$ such that $U_1 \cap U_j \neq \emptyset$. 
Let $\mathcal{U}' = \{U_1 \cup U_j, U_2, \dots, \hat{U_j}, \dots, U_n\}$, 
where $\hat{U_j}$ indicates that we remove $U_j$. 
Then $\mathcal{U}'$ is an open cover of $T$ with $n-1$ elements, 
so by induction, $G_{\mathcal{U}'}$ is connected. 
Now, $G_{\mathcal{U}'}$ is obtained by contracting the edge 
$\{1,j\}$ of $G_{\mathcal{U}}$. 
Thus, $G_{\mathcal{U}}$ is connected.
\end{proof}

\begin{lemma} \label{DR-f-approximates-H-f}
Let $\epsilon > 0$ and let $\gamma$ be a slicing curve that is covering.
There is $\delta > 0$
such that, if $\gamma_s(r),\gamma_t(r) < \delta$ for every $r \in I_\gamma$,
then $\mml(f)^{\overline{\gamma}}$ and $H(f)$ are $\epsilon$-interleaved.
\end{lemma}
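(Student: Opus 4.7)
The plan is to combine the uniform density approximation from \cref{height-approximates-density} with the uniform continuity of $f$, turning single-linkage chains at spatial scale $\gamma_t$ into honest continuous paths inside superlevel sets of $f$. First I would unpack the slice: writing $u = \varphi^{-1}(r)$ and using the defining identity $\gamma_k(u) = r \cdot v_{\gamma_s(u)}$, one has $\mml(f)^{\overline{\gamma}}(r) = \SL(V_r)(\gamma_t(u))$ with $V_r = \{x : f_{\gamma_s(u)}(x)/v_{\gamma_s(u)} \geq r\}$. By \cref{uniform-convergence-to-f}, choosing $\gamma_s$ small forces $||f_{\gamma_s(u)}/v_{\gamma_s(u)} - f||_\infty < \epsilon/2$, and hence $\{f \geq r+\epsilon/2\} \subseteq V_r \subseteq \{f \geq r-\epsilon/2\}$. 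Additionally, since $f$ is continuous with compact support it is uniformly continuous, so I pick $\delta_1 > 0$ with $|f(x)-f(y)| < \epsilon/2$ whenever $|x-y| < \delta_1$, and take $\delta$ to be the minimum of $\delta_1$ and the threshold from \cref{height-approximates-density}.

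For the direction $\mml(f)^{\overline{\gamma}}(r) \preceq H(f)(r-\epsilon)$, I would fix a cluster $C$ and two points $x, y \in C$. Single-linkage produces a $\gamma_t(u)$-chain $x = x_0, x_1, \dots, x_n = y$ in $V_r$, so each $f(x_i) \geq r - \epsilon/2$. Since $|x_i - x_{i+1}| \leq \gamma_t(u) < \delta_1$, uniform continuity yields $f \geq r - \epsilon$ on every point of the straight segment from $x_i$ to $x_{i+1}$. Concatenating these segments gives a continuous path from $x$ to $y$ inside $\{f \geq r - \epsilon\}$, placing them in the same connected component and hence in the same cluster of $H(f)(r-\epsilon)$.

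For the reverse direction $H(f)(r) \preceq \mml(f)^{\overline{\gamma}}(r-\epsilon)$, with $u' = \varphi^{-1}(r-\epsilon)$, I would take a connected component $A$ of the compact set $\{f \geq r\}$. The sup-norm approximation gives $A \subseteq V_{r-\epsilon}$. Since $A$ is compact and connected, I cover it by open balls $B(a,\gamma_t(u')/2) \cap A$ centered at $a \in A$, extract a finite subcover with centers $a_1,\dots,a_n$, and invoke \cref{cech-is-connected}: the nerve graph on these balls is connected, and any two adjacent centers share a point in their balls and so lie within $\gamma_t(u')$ of each other. Hence all the $a_i$ lie in a single single-linkage cluster of $\SL(V_{r-\epsilon})(\gamma_t(u'))$, and every other point of $A$ is within $\gamma_t(u')/2 < \gamma_t(u')$ of some center, so all of $A$ sits in that cluster.

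The main obstacle I expect is bookkeeping the endpoints of $I_{\overline{\gamma}}$, i.e.\ ensuring that $r-\epsilon$ lies in the domain of the slice when $H(f)(r)$ is non-empty. This should be controlled by \cref{consistent-curve-consistent-height} together with the uniform density approximation, which force $\max_{\overline{\gamma}}$ to dominate $\max f$ up to $\epsilon/2$, while for very large or very small $r$ both sides are empty. Past this, the argument is clean: uniform continuity converts single-linkage chains into topological paths, and the nerve of a finite ball cover of the connected compact $A$ is connected; the only minor care required is to keep track of strict versus non-strict inequalities in the chain relation of \cref{single-linkage-def}.
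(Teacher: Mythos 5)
Your proposal is correct and follows essentially the same route as the paper: the same choice of $\delta$ via uniform continuity and the kernel-estimate approximation, the same conversion of single-linkage chains into straight-line paths inside $\{f \geq r-\epsilon\}$ for one direction, and the same compactness-plus-\cref{cech-is-connected} argument for the other. The only (harmless) cosmetic difference is that you cover the component $A$ by balls of radius $\gamma_t(u')/2$ rather than by $\gamma_t(u')$-thickenings of its path components as the paper does, and your endpoint concern is indeed resolved exactly as you suggest, since \cref{height-approximates-density} forces $\max_{\overline{\gamma}} \geq \max f - \epsilon/2$.
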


\begin{proof} 
Using the fact that $f$ is uniformly continuous, 
\cref{height-approximates-density}, and \cref{uniform-convergence-to-f}, 
choose $\delta > 0$ such that, 
for all $x,y \in \bbR^d$, if $||x-y|| < \delta$, then $|f(x) - f(y)| < \epsilon / 2$,
and such that, if $\gamma_s(r) < \delta$ for all $r \in I_{\gamma}$, 
then $||h^{\overline{\gamma}} - f||_\infty < \epsilon / 2$, 
and such that, if $s < \delta$, then $||f_s / v_s - f||_\infty < \epsilon / 2$. 
Let $\gamma$ be a slicing curve that is covering, 
and such that $\gamma_s(r),\gamma_t(r) < \delta$ for every $r \in I_\gamma$. 
We show that we have 
$\mml(f)^{\overline{\gamma}}(r) \preceq H(f)(r - \epsilon)$ and 
$H(f)(r) \preceq \mml(f)^{\overline{\gamma}}(r - \epsilon)$ 
in $\C(\support(f))$ for all $r > \epsilon$. 

By \cref{height-approximates-density}, for any $x \in \support(f)$, 
if $x$ is contained in a cluster of $\mml(f)^{\overline{\gamma}}(r)$, then 
$x$ is contained in a cluster of $H(f)(r - \epsilon)$;  
and if $x$ is contained in a cluster of $H(f)(r)$, then 
$x$ is contained in a cluster of 
$\mml(f)^{\overline{\gamma}}(r - \epsilon)$. 
Next, say $x,y \in C \in \mml(f)^{\overline{\gamma}}(r)$ for $r > \epsilon$. 
We show that $x$ and $y$ belong to the same cluster of 
$H(f)(r - \epsilon)$.
Let $r_0 = \varphi^{-1}(r)$, $s_0 = \gamma_s(r_0)$, $t_0 = \gamma_t(r_0)$, and $k_0 = \gamma_k(r_0)$. 
So, by the definition of $\varphi$, we have
$r = k_0 / v_{s_0}$.
Note we have $t_0,s_0 < \delta$.
As $x,y \in C$, 
there is a chain $x_0, \dots, x_n \in \support(f)$ 
with $x = x_0, y = x_n$, such that 
$f_{s_0}(x_i) \geq k_0$ and 
$||x_i - x_{i+1}|| \leq t_0$ for all $i$. 
Dividing by $v_{s_0}$, we have 
$f_{s_0}(x_i) / v_{s_0} \geq k_0 / v_{s_0} = r$. 
Let $0 \leq i \leq n-1$, and let 
$\alpha_i : [0,1] \to \bbR^d$ 
parameterize the straight-line path from $x_i$ to $x_{i+1}$. 
Let $q \in [0,1]$. Because 
$||x_i - \alpha_i(q)|| \leq t_0 < \delta$, we have 
$|f(x_i) - f(\alpha_i(q))| < \epsilon / 2$. 
As $s_0 < \delta$, we have $| f_{s_0}(x_i)/v_{s_0} - f(x_i) | < \epsilon / 2$. 
So, we have $f(\alpha_i(q)) > r - \epsilon$. 
The concatenation of the $\alpha_i$ is therefore a path 
in $\support(f)$ from $x$ to $y$ such that $f(p) > r - \epsilon$ 
for all points $p$ on the path.
So, $x$ and $y$ belong to the same cluster of $H(f)(r - \epsilon)$. 

Finally, let $x,y \in C \in H(f)(r)$. 
We show that $x$ and $y$ belong to the same cluster of 
$\mml(f)^{\overline{\gamma}}(r - \epsilon)$ for $r > \epsilon$.
Write $t_{\epsilon} = \gamma_t(\varphi^{-1}(r-\epsilon)) > 0$; 
we will show that there is a $t_{\epsilon}$-chain $(x = x_0, \dots, x_n = y) \in C$. 
Let $\{P_i\}_{i \in I}$ be the set of path components of $C$. 
For each $i \in I$, let 
$P_i^{t_{\epsilon}} = \cup_{a \in P_i} B_C(a, t_{\epsilon})$.
Then, $\{ P_i^{t_{\epsilon}} \}_{i \in I}$ is an open cover of $C$. 
Since $C$ is compact, 
there is a finite $J \subseteq I$ such that 
$\mathcal{U} = \{ P_i^{t_{\epsilon}} \}_{i \in J}$ is an open cover of $C$. 

Now, say $i,j \in J$, and $P_i^{t_{\epsilon}} \cap P_j^{t_{\epsilon}} \neq \emptyset$. 
We show that for any $a \in P_i$ and any $b \in P_j$, 
there is a $t_{\epsilon}$-chain in $C$ connecting $a$ and $b$. 
Choose $w \in P_i^{t_{\epsilon}} \cap P_j^{t_{\epsilon}}$; 
by definition, there is $w_i \in P_i$ and $w_j \in P_j$ such that 
$||w - w_i|| < t_{\epsilon}$, and $||w - w_j|| < t_{\epsilon}$. 
Then, there is a $t_{\epsilon}$-chain in $P_i$ connecting $a$ to $w_i$, 
and a $t_{\epsilon}$-chain in $P_j$ connecting $b$ to $w_j$, 
which together give a $t_{\epsilon}$-chain in $C$ connecting $a$ and $b$. 
By \cref{cech-is-connected}, the graph $G_{\mathcal{U}}$ is connected. 
So, there is a $t_{\epsilon}$-chain in $C$ connecting $x$ and $y$.
\end{proof}

\begin{definition}
A slicing curve $\gamma = (\gamma_s,\gamma_t,\gamma_k) : I_\gamma \to \bbRs^{3}$ 
is \define{non-singular in each component} 
if it is continuously differentiable and 
the derivatives $\gamma_s', \gamma_t'$, and $\gamma_k'$ never vanish.
\end{definition}

\begin{definition}\label{admissible-family-def}
We say that a family $\{ \gamma^{\theta} \}_{\theta \in \Theta}$ 
of slicing curves is an \define{admissible family} 
if each $\gamma^{\theta}$ is covering and non-singular in each component, 
and if, for every $b > 0$, there is $\theta \in \Theta$ 
such that for all $r \in I_{\gamma^{\theta}}$, 
we have $\gamma^{\theta}_{s} (r), \gamma^{\theta}_{t} (r) < b$.
\end{definition}

\begin{lemma} \label{continuity-wrt-input}
Let $\gamma$ be a slicing curve that is 
non-singular in each component. 
Let $H$ and $E$ be $\bbRs \times \bbRs \times \bbRs^{\op}$-hierarchical clusterings of 
sets $X$ and $Y$ respectively, 
and let $R \subseteq X \times Y$ be a correspondence. 
Assume there exists $r \in I_\gamma$ with $H^\gamma(r) = \emptyset$. 
For every $\epsilon > 0$ there is $\delta > 0$ such that, 
if $H$ and $E$ are $(\delta,\delta,\delta)$-interleaved with respect to $R$, 
then $H^\gamma$ and $E^\gamma$ are $\epsilon$-interleaved with respect to $R$.
\end{lemma}

\begin{proof}
    Choose $r_0 \in I_\gamma$ such that
    $H^\gamma(r_0) = \emptyset$ and let $\epsilon > 0$. 
    Without loss of generality, suppose that $\epsilon < r_0$.
    Choose $r_1,r_2 \in I_\gamma$ such that $r_0 < r_1 < r_2 < \max_\gamma$. 
    Let $c = \min_{r \in [\epsilon/3,r_2]} 
    \left\{\min\left(|\gamma'_s(r)|, |\gamma'_t(r)|, \gamma'_k(r)\right)\right\} > 0$. 
	Let $\delta = \min(c(r_1-r_0), c\epsilon/3)$. 
    We show that if $H$ and $E$ are $(\delta,\delta,\delta)$-interleaved w.r.t. $R$, 
	then $H^\gamma$ and $E^\gamma$ are $\epsilon$-interleaved w.r.t. $R$. 
	
	Let $H^{\gamma}_{\epsilon/3}$ be the $I_{\gamma}$-hierarchical clustering of $X$ 
	with $H^{\gamma}_{\epsilon/3}(r) = H^{\gamma}(r)$ for $r \in I_\gamma$ with $r > \epsilon/3$, 
	and $H^{\gamma}_{\epsilon/3}(r) = \{X\}$ else. 
	Define $E^{\gamma}_{\epsilon/3}$ in the same way. 
	Since $H^{\gamma}_{\epsilon/3}$ and $H^{\gamma}$ are $\epsilon/3$-interleaved, 
	and similarly for $E^{\gamma}$, it suffices to show that 
	$H^{\gamma}_{\epsilon/3}$ and $E^{\gamma}_{\epsilon/3}$ are $\epsilon/3$-interleaved w.r.t. $R$. 
	We first show that 	$E^\gamma(r_1) = \emptyset$. 
    It follows that $E^\gamma(r) = H^\gamma(r) = \emptyset$ for all $r \geq r_1$. 
    By the definition of $c$, we have 
	$\gamma_s(r_1) + c(r_1-r_0) \leq \gamma_s(r_0)$, 
	$\gamma_t(r_1) + c(r_1-r_0) \leq \gamma_t(r_0)$, and  
    $\gamma_k(r_1) - c(r_1-r_0) \geq \gamma_k(r_0)$. 
    Using these equations, the $(\delta,\delta,\delta)$-interleaving between $H$ and $E$, 
    and the assumption that $\delta \leq c(r_1-r_0)$, 
    we have $\pi_Y^*(E^{\gamma})(r_1) \preceq \pi_X^*(H^{\gamma})(r_0) = \emptyset$, 
    and thus $E^\gamma(r_1) = \emptyset$. 
    Now, to show that $H^{\gamma}_{\epsilon/3}$ and $E^{\gamma}_{\epsilon/3}$ 
    are $\epsilon/3$-interleaved w.r.t. $R$, 
    it suffices to show that, for $r \in (2\epsilon/3, r_1)$, 
    we have $\pi_X^*(H^{\gamma}_{\epsilon/3})(r) \preceq \pi_Y^*(E^{\gamma}_{\epsilon/3})(r - \epsilon/3)$, and 
    $\pi_Y^*(E^{\gamma}_{\epsilon/3})(r) \preceq \pi_X^*(H^{\gamma}_{\epsilon/3})(r - \epsilon/3)$. 
    Again by the definition of $c$, we have 
	$\gamma_s(r) + c(\epsilon/3) \leq \gamma_s(r - \epsilon/3)$, 
	$\gamma_t(r) + c(\epsilon/3) \leq \gamma_t(r - \epsilon/3)$, and  
    $\gamma_k(r) - c(\epsilon/3) \geq \gamma_k(r - \epsilon/3)$ 
    for any $r \in (2\epsilon/3, r_2)$. 
    Using these equations, the $(\delta,\delta,\delta)$-interleaving between $H$ and $E$, 
    and the assumption that $\delta \leq c\epsilon/3$, 
    we obtain the desired relations.
\end{proof}

\begin{lemma} \label{main-consistency}
Let $\{\gamma^{\theta}\}_{\theta \in \Theta}$ 
be an admissible family of slicing curves, 
and let $X_n$ be a sample of $f$. For every $\epsilon > 0$ there exist $\theta \in \Theta$ and $\delta > 0$
such that, if $\dP(\mu_n,\mu_f),\dH(X_n,\support(f)) < \delta$, then
$\mml(X_n)^{\overline{\gamma^{\theta}}}$ and $H(f)$ are
$\epsilon$-interleaved with respect to the closest point correspondence $R_c \subseteq X_n \times \support(f)$.
\end{lemma}

\begin{proof}
    By \cref{DR-f-approximates-H-f}, and the fact that the family $\{\gamma^{\theta}\}_{\theta \in \Theta}$
    is admissible, we can fix the parameter $\theta$ so that
    $H(f)$ and $\mml(f)^{\overline{\gamma^{\theta}}}$ are $\epsilon/2$-interleaved.
    It is then enough to show that we can choose $\delta > 0$
    such that, if $\dP(\mu_n,\mu_f) < \delta$ and $\dH(X_n,\support(f)) < \delta$, then
    $\mml(f)^{\overline{\gamma^{\theta}}}$ and $\mml(X_n)^{\overline{\gamma^{\theta}}}$ 
    are $\epsilon/2$-interleaved with respect to $R_c$. 
    To see that this can be done, note that the operation
    $\mml(-)^{\overline{\gamma^{\theta}}}$ is the composite of $\mml(-)$
    and slicing by $\overline{\gamma^{\theta}}$, 
	and apply \cref{continuity-of-mml} 
	(note that the interleaving constructed in the proof 
	is with respect to $R_c$) 
	and \cref{continuity-wrt-input}, 
	where the last result applies by 
	\cref{consistent-curve-consistent-height}, since $\overline{\gamma^{\theta}}$ is covering.
\end{proof}

\begin{lemma}\label{convergenceofsample}
    Let $(\MPS,d,\mu)$ be a compact metric probability space with full support and let $X_n$ be an i.i.d.~$n$-sample of $\MPS$,
    seen as a subspace of $\MPS$. Let $\epsilon > 0$.
    Then, the probability that $\max(\dP(\mu_n,\mu),\dH^\MPS(X_n,\MPS)) > \epsilon$ goes to $0$ as $n \to \infty$.
    Here $\mu_n$ is the empirical measure given by the sample $X_n$.
\end{lemma}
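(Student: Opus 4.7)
The plan is to handle the two summands in the maximum separately, since a union bound reduces the claim to showing $\Pr[\dH^X(X_n,X)>\epsilon]\to 0$ and $\Pr[\dP(\mu_n,\mu)>\epsilon]\to 0$ individually as $n\to\infty$.

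For the Hausdorff term, I would exploit compactness together with the full-support hypothesis. By compactness of $X$, cover $X$ by finitely many open balls $B(x_1,\epsilon/2),\ldots,B(x_N,\epsilon/2)$. Full support forces each $p_i := \mu(B(x_i,\epsilon/2))$ to be strictly positive; setting $p = \min_i p_i > 0$, the probability that a given ball contains no sample point is $(1-p_i)^n \le (1-p)^n$ by independence. A union bound gives probability at most $N(1-p)^n$ that some ball is missed, which tends to $0$. On the complementary event, every $x\in X$ lies in some $B(x_i,\epsilon/2)$, hence within $\epsilon$ of $X_n$; combined with the automatic inclusion $X_n\subseteq X$, this yields $\dH^X(X_n,X) \le \epsilon$.

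For the Prokhorov term, the cleanest route is to invoke Varadarajan's theorem: for a Borel probability measure on a separable metric space, the empirical measure of an i.i.d.~sample converges weakly to the measure almost surely. Since $X$ is compact and hence separable, and since $\dP$ metrizes weak convergence of probability measures on such a space, one gets $\dP(\mu_n,\mu)\to 0$ almost surely, which is stronger than the required convergence in probability. A self-contained alternative is a finite-partition argument: choose a finite Borel partition $\{A_i\}_{i=1}^N$ of $X$ with each $A_i$ of diameter less than $\epsilon$, apply the strong law of large numbers to each indicator $\mathbf{1}_{A_i}$ to get $|\mu_n(A_i)-\mu(A_i)| < \epsilon/N$ simultaneously with high probability, and deduce $\mu(B) \le \mu_n(B^\epsilon)+\epsilon$ for an arbitrary Borel $B$ by summing over those $A_i$ meeting $B$.

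Since the lemma is labelled standard and marked with \qed, no serious obstacle is expected. The only point that requires a little care is that we genuinely need convergence in the Prokhorov metric, not merely weak convergence of $\mu_n$ to $\mu$; this comes for free from the metrization property of $\dP$ on separable metric spaces, which is why one either invokes Varadarajan directly or reproduces the defining inequality of $\dP$ by the partition argument above.
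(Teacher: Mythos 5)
Your argument is correct. Note that the paper itself gives no proof of this lemma---it is stated with a \qed and the remark that the result is standard---so there is no argument to compare against; your write-up simply supplies the standard proof that the authors omit. Both halves check out: the covering argument for the Hausdorff term correctly uses compactness to get a finite cover by $\epsilon/2$-balls and full support to get a uniform positive lower bound $p$ on their measures, so the failure probability $N(1-p)^n$ vanishes; and for the Prokhorov term, either Varadarajan's theorem combined with the fact that $\dP$ metrizes weak convergence on separable metric spaces, or your self-contained finite-partition argument (where the diameter-$<\epsilon$ condition guarantees that every cell meeting a Borel set $B$ lies in $B^\epsilon$), gives the required convergence in probability.
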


\begin{proof}
    We show that $P(\dH^\MPS(X_n,\MPS) > \epsilon) \to 0$ as $n \to \infty$.
    Since $\MPS$ is compact, for any $\epsilon > 0$
    we can cover $\MPS$ with finitely many $\epsilon$-balls, all of which have positive measure,
    by assumption. This implies that the probability that there is a sample point inside of each
    of these goes to $1$ as $n$ goes to $\infty$. 
    We conclude by showing that $P(\dP(\mu_n,\mu) > \epsilon) \to 0$ as $n \to \infty$. 
    This follows from the facts that 
    i.i.d.~samples of a separable metric space with
    the empirical measure converge weakly to the sampled space, in probability
    \cite[Chapter~II, Theorem~7.1]{parthasarathy}, and that 
    weak convergence implies convergence in the Prokhorov distance 
    \cite[Section~6]{billingsley}.
\end{proof}

\begin{theorem} \label{consistency-gamma-linkage}
Let $\{ \gamma^{\theta} \}_{\theta \in \Theta}$ be an admissible family of slicing curves. 
The hierarchical clustering algorithm $\theoremlinkage{\overline{\gamma}}$ 
with parameter space $\Theta$, defined using any kernel $K$, 
is CI-consistent with respect to 
any continuous, compactly supported probability density function $f : \bbR^d \to \bbR$.
\end{theorem}

\begin{proof}
    The theorem follows from \cref{main-consistency} and the fact that samples converge
    to the space being sampled, \cref{convergenceofsample}.
\end{proof}

\subsection{Details from \cref{Structure-one-parameter}} \label{appendix-structure-one-parameter}

\begin{lemma}
    \label{lemma:splits-from-merges}
    Let $I \subseteq \bbR$ be an interval, let $H$ be an $I$-hierarchical clustering, and let $\bfC < \bfD \in \PC(H)$.
    Then, there exists $\bfE \in \PC(H)$ such that $\bfE < \bfD$ and $\bfC$ and $\bfE$ are incomparable.
\end{lemma}
\begin{proof}
    Let $r < s \in I$ and $C \in H(r)$ and $D \in H(s)$ such that $[C] = \bfC$ and $[D] = \bfD$.
    Since $\bfC \neq \bfD$, there must exist $t \in [r,s]$ and $E \in H(t)$ such that $E \subseteq D$ but $C \not\subseteq E$.
    Then, the persistent cluster $\bfE \in \PC(H)$ satisfies the required conditions.
\end{proof}

\begin{lemma}
    \label{lemma:underlying-clusters-are-disjoint}
    Let $I \subseteq \bbR$ be an interval, $H$ be an $I$-hierarchical clustering, and let $\bfC, \bfD \in \PC(H)$.
    If $U(\bfC) \cap U(\bfD) \neq \emptyset$, then $U(\bfC) \subseteq U(\bfD)$ or $U(\bfD) \subseteq U(\bfC)$.
    Moreover, if $U(\bfC) = U(\bfD)$, then $\bfC = \bfD \in \PC(H)$.
\end{lemma}

\begin{proof}
Say $U(\bfC) \cap U(\bfD) \neq \emptyset$, and let $x \in U(\bfC) \cap U(\bfD)$. 
Choose $r \in \life(\bfC)$ with $x \in \bfC(r)$ and $r' \in \life(\bfD)$ with $x \in \bfD(r')$. 
Without loss of generality, $r \leq r'$, and thus $\bfC(r) \subseteq \bfD(r')$. 
We show $U(\bfC) \subseteq U(\bfD)$. 
Say $y \in U(\bfC)$. Choose $i \in \life(\bfC)$ with $y \in \bfC(i)$. 
We may assume $i \geq r$. 
If $r' \in \life(\bfC)$, then $\bfC(r') = \bfD(r')$, and thus $\bfC = \bfD$. 
So, assume $r' \not\in \life(\bfC)$. 
As $\life(\bfC)$ is an interval, $i < r'$, and thus $\bfC(i) \subseteq \bfD(r')$. 
So, $y \in \bfD(r') \subseteq U(\bfD)$.

Now, assume $U(\bfC) = U(\bfD)$. 
As in the first step above, we have without loss of generality 
$\bfC(r) \subseteq \bfD(r')$ for some $r \leq r'$. 
We show $\bfC(r) \sim \bfD(r')$. 
Let $r'' \in [r,r']$, and let $A,A' \in H(r'')$ with $A,A' \subseteq \bfD(r')$. 
We have $A,A' \subseteq U(\bfD) = U(\bfC)$. 
Let $B \in H(r'')$ be the cluster such that $\bfC(r) \subseteq B$. 
It is straightforward to show $A=B$, and similarly $A'=B$. 
Thus $A=A'$, finishing the proof.
\end{proof}

\begin{lemma}
    \label{lemma:upset-linearly-ordered}
    Let $I \subseteq \bbR$ be an interval, let $H$ be an $I$-hierarchical clustering, and let $\bfC < \bfD \in \PC(H)$.
    Then, the set $\{ \bfD \in \PC(H) : \bfD \geq \bfC\}$ is linearly ordered in $\PC(H)$.
\end{lemma}
\begin{proof}
    Let $r,r' \in I$, $D \in H(r)$, $D'\in H(r')$ such that $[D] = \bfD$ and $[D'] = \bfD'$.
    Assume, without loss of generality, that $r \leq r'$.
    Since $\bfC \leq \bfD, \bfD'$, we have $U(\bfC) \subseteq D$ and $U(\bfC) \subseteq D'$.
    If follows that $U(\bfD) \cap U(\bfD') \neq \emptyset$ and thus $\bfD \leq \bfD'$ or $\bfD' \leq \bfD$, by \cref{lemma:underlying-clusters-are-disjoint}.
\end{proof}

\begin{definition}
Let $H$ be a one-parameter hierarchical clustering and let $\bfC \in \PC(H)$.
The \define{successor} of $\bfC$ is, if it exists, the unique minimal element of $\{ \bfD \in \PC(H) : \bfD > \bfC\}$, where uniqueness follows from the fact that $\{ \bfD \in \PC(H) : \bfD > \bfC\}$ is linearly ordered (\cref{lemma:upset-linearly-ordered}).
\end{definition}

We now recall the terminology of persistence modules; 
we refer the interested reader to \cite{chazal-silva-glisse-oudot,bauer-lesnick-matchings} for details. 
Fix a field $\fieldk$. 
Let $I \subseteq \bbR$ be an interval, and let $H$ be an $I$-hierarchical clustering of a set $X$. 
For each $r \in I$, consider the vector space $\fieldk H(r)$ that is freely generated by the clusters of $H(r)$. 
For $r \leq r'$, there is a linear map $\fieldk H(r) \to \fieldk H(r')$ 
defined on the basis given above by $H(r \leq r')$. 
This data gives a functor $\fieldk H : I \to \kvec$, 
where $\kvec$ is the category of vector spaces over $\fieldk$. 
Analogously to \cref{P-HC-def}, we call such a functor an \define{$I$-persistence module}. 

Let $I \subseteq \bbR$ be an interval and let $M : I \to \kvec$ be an $I$-persistence module.
Given $r \leq r' \in I$, define the \define{rank invariant} \citep{carlsson-zomorodian} of $M$ at $r \leq r'$ as $\rk(M)(r \leq r') = \rk(M(r) \to M(r'))$.
Note that $\rk(H) = \rk(\fieldk H)$.

For our purposes, given a set $A$, a \define{multiset} of elements of $A$ consists of an indexing set $J$ and a function $a : J \to A$.
We usually denote such a multiset by $\{a_j\}_{j \in J}$.
We say that two multisets $\{a_j\}_{j \in J}$ and $\{b_k\}_{k \in K}$ of elements of $A$ are \define{equal} if there exists a bijection $\beta : J \to K$ such that $a_j = b_{\beta(j)}$ for all $j \in J$.

Let $B \subseteq I \subseteq \bbR$ be inclusions of intervals.
The $I$-persistence module $\fieldk_{B} : I \to \kvec$ is the functor taking the value $\fieldk$ on every $r \in B$ and the value $0$ elsewhere, with structure morphism being the identity $\fieldk \to \fieldk$ whenever that is possible.
The following theorem is due to Crawley-Boevey.

\begin{theorem}[{\citealt[Theorem~1.1]{crawley}}]
    \label{theorem-decomposition}
    Let $\fieldk$ be a field, let $I \subseteq \bbR$ be an interval, and let $M : I \to \kvec$ be an $I$-per\-sis\-tence module.
    If $M$ is pointwise finite-dimensional, then there exists a unique multiset of intervals $\{\barA_j \subseteq I\}_{j \in J}$ such that $M$ is isomorphic to $\bigoplus_{j \in J} \fieldk_{\barA_j}$.
\end{theorem}

\begin{lemma}
    \label{lemma:characterization-barcode-as-decomposition}
    Let $I \subseteq \bbR$ be an interval.
    Let $H$ be a pointwise finite $I$-hierarchical clustering and let $\fieldk$ be any field.
    Then $\barc(H)$ is the unique multiset $\{B_j \subseteq I\}_{j \in J}$ such that $\fieldk H \cong \bigoplus_{j \in J} \fieldk_{B_j}$.
\end{lemma}
\begin{proof}
    By definition, the $I$-persistence module $\fieldk H : I \to \kvec$ is pointwise finite dimensional, in the sense of \cite{crawley}.
    Thus, by \cref{theorem-decomposition}, there exist a unique multiset of intervals $\{\barA_j \subseteq I\}_{j \in J}$ such that $\fieldk H \cong \bigoplus_{j \in J} \fieldk_{\barA_j}$.
    By unfolding definitions, we get 
    \[
        \rk\left(\bigoplus_{j \in J} \fieldk_{\barA_j}\right)(r \leq r') = 
        \sum_{j \in J} \rk\left(\fieldk_{\barA_j}\right)(r \leq r') =
        |\{j \in J : r,r' \in \barA_j \}|,
    \]
    so $\{B_j \subseteq I\}_{j \in J}$ is a barcode for $H$.

    The fact that the barcode is unique is a particular case of \cite[Proposition~2.8]{botnan-oppermann-oudot}, where the poset $P$ is taken to be $I$ and the collections of intervals $\widehat{\mathcal{I}}$ and $\mathcal{I}$ are all intervals included in $I$.
\end{proof}

\begin{proof}
\textbf{(of \cref{theorem:barcode-existence-uniqueness})} 
This follows at once from \cref{theorem-decomposition} and \cref{lemma:characterization-barcode-as-decomposition}.
%
\end{proof}

\begin{lemma}
    \label{lemma:number-of-leaves-independent-clusters}
Let $H$ be a finite $I$-hierarchical clustering.
Then
\[
    |\leaves(H)| = \max \{k \in \bbN : \exists \{r_j\in I\}_{1 \leq j \leq k}, \{D_j \in H(r_j)\}_{1\leq j \leq k} \text{ s.t. } D_i \subseteq D_j \Rightarrow i = j \},
\]
and any set $\{D_j \in H(r_j)\}_{1\leq j \leq k}$ attaining the maximum must be such that $\{[D_j] \in \PC(H)\} = \leaves(H)$.
\end{lemma}
\begin{proof}
    The inequality $(\leq)$ follows immediately by taking the set of clusters $\{D_j\}$ to be a set of representatives of the leaves of $H$.
    The inequality $(\geq)$ follows from the fact that, given a set $\{D_j \in H(r_j)\}_{1\leq j \leq k}$ as in the statement, the set $\{[D_j] \in \PC(H)\}_{1\leq j \leq k}$ forms an antichain of $\PC(H)$ and thus its cardinality is bounded above by the cardinality of the set of minimal elements of $\PC(H)$, which is, by definition, the set of leaves.
    To prove the last claim, note that any antichain of $\PC(H)$ of cardinality $|\leaves(H)|$ must necessarily be the set $\leaves(H)$ itself, since $\PC(H)$ forms a forest.
\end{proof}

\begin{lemma}
    \label{lemma:removing-leaf}
    Let $H$ be a finite $I$-hierarchical clustering of a set $\SET$.
    Assume that $H$ is not constantly empty and let $\bfC \in \leaves(H)$.
    Then
    \begin{enumerate}
        \item $H \setminus \bfC$ is a finite $I$-hierarchical clustering;
        \item $|\leaves(H\setminus \bfC)| = |\leaves(H)| - 1$;
        \item If $|\leaves(H)| \geq 2$, then $\min_{\bfD \in \leaves(H)} \length(\bfD) \leq \min_{\bfD' \in \leaves(H \setminus \bfC)} \length(\bfD')$.
    \end{enumerate}
\end{lemma}

\begin{proof}
Note first that if $D_1 \in H(r_1)$ and $D_2 \in H(r_2)$ are such that 
$[D_1] = [D_2]$ in $\PC(H)$ and $[D_1] \neq \bfC$, then 
$[D_1] = [D_2]$ in $\PC(H \setminus \bfC)$. 
Using this fact, we can define a function 
$\varphi : \PC(H) \setminus \bfC \to \PC(H \setminus \bfC)$ as follows. 
For $\bfD \in \PC(H) \setminus \bfC$, pick any $D \in \bfD$, 
and let $\varphi(\bfD) = [D] \in \PC(H \setminus \bfC)$. 
We now prove that $\varphi$ is surjective, which implies the first statement of the lemma. 
Let $\bfD \in \PC(H \setminus \bfC)$, and let $D \in \bfD$. 
Then $[D] \neq \bfC$ in $\PC(H)$ by definition of $H \setminus \bfC$. 
So, $\varphi([D]) = \bfD$.

Next, we show that, when $\varphi$ is restricted to $\leaves(H) \setminus \bfC$, 
$\varphi$ is a bijection between $\leaves(H) \setminus \bfC$ and $\leaves(H \setminus \bfC)$; 
this proves the second statement of the lemma. 
For this, we will use the following fact, which is straightforward to check. 
If $G$ is an $I$-hierarchical clustering, and $\mathbf{A} \in \PC(G)$, 
then $\mathbf{A}$ is a leaf if and only if for any $r \in \life(\mathbf{A})$ 
and for any $r' \in I$ with $r' < r$, there is at most one cluster $B \in H(r')$ with 
$B \subseteq \mathbf{A}(r)$. 
Now, let $\bfD \in \leaves(H) \setminus \bfC$. We need to show $\varphi(\bfD)$ is a leaf. 
Let $D = \bfD(r)$ for some $r$, so that $\varphi(\bfD) = [D]$. 
If there is $r' < r$ and $B,B' \in (H \setminus \bfC)(r')$ with $B,B' \subseteq D$, 
then as $\bfD$ is a leaf of $H$, we have $B = B'$. 
Let $s \in \life(\varphi(\bfD))$, and let $s' \in I$ with $s' < s$. 
If $s' < r$, then we have shown that there is at most one cluster in 
$(H \setminus \bfC)(s')$ contained in $\varphi(\bfD)(s)$. 
If $r \leq s'$, then as $\varphi(\bfD)(r) \sim \varphi(\bfD)(s)$, 
there is exactly one cluster in $(H \setminus \bfC)(s')$ contained in $\varphi(\bfD)(s)$. 
So, $\varphi(\bfD)$ is a leaf. 

We show that $\varphi|_{\leaves(H) \setminus \bfC}$ is injective. 
Say $\mathbf{D}, \mathbf{E} \in \leaves(H) \setminus \bfC$ and $\mathbf{D} \neq \mathbf{E}$. 
Let $D = \mathbf{D}(r)$ and $E = \mathbf{E}(r')$ for some $r,r'$. 
As $\mathbf{D}$ and $\mathbf{E}$ are distinct leaves, 
\cref{lemma:underlying-clusters-are-disjoint} implies $D \cap E = \emptyset$. 
As $D = \varphi(\mathbf{D})(r)$ and $E = \varphi(\mathbf{E})(r')$, 
we have $\varphi(\mathbf{D}) \neq \varphi(\mathbf{E})$.

Next we show that the image of $\varphi|_{\leaves(H) \setminus \bfC}$ 
is $\leaves(H \setminus \bfC)$. Say $\bfD \in \leaves(H \setminus \bfC)$. 
Let $D = \bfD(r)$ for some $r$. 
If $[D] \in \leaves(H)$, then we are done, since $\varphi([D]) = \bfD$. 
So, say $[D]$ is not a leaf of $H$. 
Then there is $r' \in I$ with $r' < r$ and $B,B' \in H(r')$ with $B \neq B'$ 
and $B,B' \subseteq D$. Without loss of generality, $B \not\in (H \setminus \bfC)(r')$, 
so $[B] = \bfC$ in $\PC(H)$. 
Thus, $[B'] \neq \bfC$, so that $B' \in (H \setminus \bfC)(r')$. 
As $\bfD$ is a leaf of $H \setminus \bfC$, 
we have $[B'] = \bfD$ in $\PC(H \setminus \bfC)$, 
so that $\varphi([B']) = \bfD$. 
We show that $[B'] \in \leaves(H) \setminus \bfC$. 
Say there is $r'' < r'$ and $A,A' \in H(r'')$ with $A,A' \subseteq B'$. 
Then $A \cap B = \emptyset$ and $A' \cap B = \emptyset$, 
so that $[A] \neq \bfC$ and $[A'] \neq \bfC$ in $\PC(H)$. 
Thus, $A,A' \in (H \setminus \bfC)(r'')$, 
and since $\bfD$ is a leaf of $H \setminus \bfC$, $A = A'$. 

We have shown that $\varphi|_{\leaves(H) \setminus \bfC}$ 
is a bijection between $\leaves(H) \setminus \bfC$ and $\leaves(H \setminus \bfC)$, 
proving the second statement of the lemma. 
We will also use this fact to prove the third statement of the lemma. 
Note first that if $\bfD \in \PC(H) \setminus \bfC$, 
then $\life(\bfD) \subseteq \life(\varphi(\bfD))$, 
and so $\length(\bfD) \leq \length(\varphi(\bfD))$. 
Now, say $|\leaves(H)| \geq 2$, so that $|\leaves(H \setminus \bfC)| \geq 1$. 
Choose $\mathbf{E'} \in \leaves(H \setminus \bfC)$ 
such that $\length(\mathbf{E'}) = \min_{\bfD' \in \leaves(H \setminus \bfC)} \length(\bfD')$. 
Let $\mathbf{E} \in \leaves(H) \setminus \bfC$ be such that $\varphi(\mathbf{E}) = \mathbf{E'}$. 
Then
$\min_{\bfD \in \leaves(H)} \length(\bfD) \leq 
\length(\mathbf{E}) \leq 
\length(\mathbf{E'}) = 
\min_{\bfD' \in \leaves(H \setminus \bfC)} \length(\bfD').$
\end{proof}

\begin{lemma}
    \label{lemma:inductive-step-elder-rule}
    Let $H$ be a finite $I$-hierarchical clustering that is not constantly empty, and let $\bfC$ be a minimal leaf of $H$.
    Then $\fieldk H \cong \fieldk(H \setminus \bfC) \oplus \fieldk_{\life(\bfC)}$.
\end{lemma}
\begin{proof}
    Let $H$ be a finite $I$-hierarchical clustering of a set $\SET$.
    For convenience, denote $H' = H \setminus \bfC$.
    We start by defining a morphism of $I$-persistence modules
    $\fieldk H' \oplus \fieldk_{\life(\bfC)} \to \fieldk H$ as a sum of two morphisms $\phi : \fieldk H' \to \fieldk H$ and $\psi : \fieldk_{\life(\bfC)} \to \fieldk H$.
    Let $\phi : \fieldk H' \to \fieldk H$ be given by mapping the basis element of $\fieldk H'(r)$ corresponding to $D \in H'(r)$ to the basis element of $\fieldk H(r)$ corresponding to the same cluster $D \in H(r)$.

    To define $\psi$, we consider two cases.
    If $\bfC$ has no successor in $\PC(H)$, then the morphism $\psi : \fieldk_{\life(\bfC)} \to \fieldk H$ defined by mapping the basis element $1 \in \fieldk = \fieldk_{\life(\bfC)}(r)$ to the basis element of $\fieldk H(r)$ corresponding to $\bfC(r) \in H(r)$ is well-defined.

    If $\bfC$ does have a successor $\bfD$, let $\bfA$ denote any leaf of $H$ smaller than $\bfD$ and different from $\bfC$, which must exist by \cref{lemma:splits-from-merges}.
    Since $\bfC$ is a minimal leaf, for each $r \in \life(\bfC)$, there exists $r' \in \life(\bfA)$ such that $r' \leq r$.
    Let $\bfA' : \life(\bfC) \to \C(\SET)$ denote the persistent cluster defined as $\bfA'(r) = H(r' \leq r)(\bfA(r')) \in H(r)$.
    Let $\psi : \fieldk_{\life(\bfC)} \to \fieldk H$ be defined by mapping the basis element $1 \in \fieldk = \fieldk_{\life(\bfC)}(r)$ to the subtraction of basis elements of $\fieldk H(r)$ given by $\bfC(r) - \bfA'(r)$.
    In order to see that this is well-defined, note that, if $t \in \life(\bfD)$, then $H(r \leq t)(\bfC(r)) = H(r \leq t)(\bfA'(r))$, since both $\bfC$ and $\bfA$ are smaller than $\bfD$ in $\PC(H)$, and thus $\bfC(r) - \bfA'(r)$ maps to $0$ in $\fieldk H(t)$ as soon as $t$ is larger than all elements in $\life(\bfC)$.

    To conclude the proof, it is enough to prove that the morphism $\phi + \psi : \fieldk H' \oplus \fieldk_{\life(\bfC)} \to \fieldk H$ is an epimorphism, since, in that case, it must also by a monomorphism, by dimension-counting.
    Let $r \in I$; it is clear that all basis elements of $\fieldk H(r)$ corresponding to clusters $B \in H(r)$ such that $[B] \neq \bfC$ are in the image of $\phi + \psi$, since they are already in the image of $\phi$, by construction.
    It is then enough to prove that basis elements of $\fieldk H(r)$ corresponding to clusters $C \in H(r)$ such that $[C] = \bfC$ are in the image of $\phi + \psi$, and for this we necessarily have $r \in \life(\bfC)$.
    To conclude, note that the element of $\fieldk H(r)$ corresponding to $\bfA'(r)$ is in the image of $\phi$, since it is a basis element of $\fieldk H'(r)$, and the element corresponding to $\bfC(r) - \bfA'(r)$ is in the image of $\psi$, by construction.
\end{proof}

\begin{proof}
\textbf{(of \cref{proposition:elder-rule})} 
    Note that the process is well-defined and terminates in $|\leaves(H)|$ by \cref{lemma:removing-leaf}.
    By induction and \cref{lemma:inductive-step-elder-rule}, we have $\fieldk H = \bigoplus_{1 \leq i \leq k} \fieldk_{\life(\bfC_i)}$, so the claim follows from \cref{lemma:characterization-barcode-as-decomposition}.
\end{proof}

\begin{corollary}
    \label{corollary:leaf-bar-length}
    Let $H$ be a finite hierarchical clustering.
    Then, 
    \[
        \min_{\bfC \in \leaves(H)} \length(\bfC) = \min_{B \in \barc(H)} \length(B).
    \]
\end{corollary}
\begin{proof}
    This follows from the first step in \cref{proposition:elder-rule} and the third claim of \cref{lemma:removing-leaf}.
\end{proof}

\begin{lemma}
\label{lemma-examples-tame}
    For any $\lambda \in \{\lambda\con^{x,y}\}_{x,y > 0}$ and any compact metric probability space $\MPS$, the hierarchical clustering $\linkage{\lambda}(\MPS)$ is essentially finite.
    If $f : \bbR^d \to \bbR$ is continuous and compactly supported, then $H(f)$ is essentially finite.
\end{lemma}
\begin{proof}
    Let $H$ be an $\bbR$-hierarchical clustering.
    For the purposes of this proof, and in analogy with the notion of q-tameness introduced in \cite{chazal-etal-interleavings}, we say that $H$ is \define{q-tame} if, for every $r < r' \in \bbR$, the cardinality of the image of the function $H(r \leq r') : H(r) \to H(r')$ is finite, and, we say that $H$ is \define{bounded} if there exist $s\leq t \in \bbR$ such that $H$ is constant on $(-\infty, s)$ and on $(t, \infty)$.
    We start by proving that the hierarchical clusterings of interest are q-tame and bounded.

    Let $\MPS$ be a compact metric probability space.
    Then, the extension of $\linkage{\lambda}(\MPS)$ to an $\bbR$-hierarchical clustering is q-tame since $\linkage{\lambda}(\MPS)$ is pointwise finite, as its values are a single-linkage clustering of a totally bounded metric space.
    The extension is also bounded, since it is an extension of a hierarchical clustering defined over a finite interval.

    Let $f : \bbR^d \to \bbR$ be continuous and compactly supported.
    The hierarchical clustering $H(f)$ is q-tame by \citet[Theorem~2]{cagliari-landi} (or \citealt[Theorem~3.33]{chazal-silva-glisse-oudot}), and it is bounded since $f$ takes values in a compact set. 
    Then, the result follows from the following claim.
    
    \smallskip
    \noindent \textit{Claim.} A bounded $\bbR$-hierarchical clustering is q-tame if and only if it is essentially finite.

    \smallskip
    \noindent \textit{Proof of claim.}
    Let $H$ be a bounded $\bbR$-hierarchical clustering.

    Assume that $H$ is essentially finite.
    If $r < r' \in \bbR$, there exists $\tau > 0$ such that $r\leq r' - \tau$; thus $H(r\leq r') = H(r' - \tau \leq r') \circ H(r \leq r' - \tau)$.
    The image of the function $H(r' - \tau \leq r')$ is finite, since $H_{\geq \tau}$ is a finite hierarchical clustering.
    It follows that the image of $H(r \leq r')$ is finite, and thus that $H$ is q-tame.

    Now assume that $H$ is q-tame, and, towards a contradiction, let $\tau > 0$, assume $\PC(H_{\geq \tau})$ is infinite, and let $\{\bfC_n\}_{n \geq 0}$ be a countably infinite family of elements of $\PC(H_{\geq\tau})$.
    Let $s \leq t \in \bbR$ be such that $H_{\geq \tau}$ is constant on $(-\infty,s)$ and on $(t,\infty)$; such $s$ and $t$ must exist since $H$ is bounded.
    Only finitely many elements of $\{\bfC_n\}$ can have support intersecting $(-\infty, s) \cup (t,\infty)$, since $H_{\geq \tau}$ is constant on $(-\infty,s)$ and on $(t,\infty)$.
    Thus, after taking a subsequence of $\{\bfC_n\}$ we may assume that all intervals $\life(\bfC_n)$ are contained in $[s,t]$.
    Since $[s,t]$ is compact, after taking a subsequence of $\{\bfC_n\}$, we may assume that, as $n \to \infty$, we have
    $b_n := \birth(\bfC_n) \to b$ and $d_n := \death(\bfC_n) \to d$, with $b \leq d$.
    Let $s_n = (b_n + d_n)/2$ and let $s = (b + d)/2$. 
    For each $n$, there is $C_n \in H_{\geq \tau}(s_n)$ with $[C_n] = \bfC_n$, 
    and therefore there is $C_n' \in H(s_n - \tau)$ with $H(s_n - \tau \leq s_n)(C_n') = C_n$. 
    As $n \to \infty$, we have $s_n \to s$, so there exists $n_0 \in \bbN$ such that $ s_n - \tau < s - \frac{2\tau}{3} < s - \tau/3 < s_n$ for all $n \geq n_0$.
    Thus, for $n \geq n_0$, the elements $H(s - \tau \leq s - \frac{\tau}{3})(C_n')$ form an infinite subset of $H(s - \frac{2\tau}{3} \leq s - \frac{\tau}{3})$, a contradiction.
    This concludes the proof of the claim.
%
%
\end{proof}

\begin{proof}
\textbf{(of \cref{bottleneck-correspondence-interleaving})} 
Note that, by \cref{lemma:characterization-barcode-as-decomposition}, the barcode of a pointwise finite HC $H$ is equal to the barcode of $\fieldk H$ for any field $\fieldk$. 
If $H$ and $E$ are $\epsilon$-interleaved with respect to some correspondence, then $\fieldk H$ and $\fieldk E$ are $\epsilon$-interleaved in the sense of \citet{bauer-lesnick-matchings}, so it
follows from \citet[Theorem~6.4]{bauer-lesnick-matchings} that there exists an $\epsilon$-matching between the barcodes of $H$ and $E$.
\end{proof}

\begin{proof}
\textbf{(of \cref{barcode-algorithm-independent-ordering})} 
As every permutation can be written as a composition of adjacent transpositions, 
it suffices to consider modifying an ordering $[\sigma_1, \dots, \sigma_p]$ 
by transposing $\sigma_i$ and $\sigma_{i+1}$. 
If this transposition results in an ordering satisfying the assumptions, 
then necessarily $f(\sigma_i) = f(\sigma_{i+1})$. There are three cases. 

In Case 1, $\sigma_i$ and $\sigma_{i+1}$ are both vertices. 
In this case it is clear that the transposition does not effect the output.
In Case 2, one of the simplices is a vertex $x$ and the other an edge $e$. 
In this case, since it is admissible to order $e$ before $x$, 
$x$ is not an endpoint of $e$. 
Thus, processing $e$ does not effect the connected component nor the bar introduced 
when processing $x$, so the transposition does not effect the output.
In Case 3, $\sigma_i = \{x,y\}$ and $\sigma_{i+1} = \{a,b\}$ are both edges. 
Let $(c_x, u_x)$ be the connected component containing $x$ after processing $\sigma_{i-1}$, 
and similarly for $y,a,b$. 
If $c_x = c_y$ or $c_a = c_b$, then it is clear that the transposition has no effect on the output. 
So, we assume $c_x \neq c_y$ and $c_a \neq c_b$. 
If $\{c_x,c_y,c_a,c_b\}$ has $2$ or $4$ elements, then it is straightforward to check 
that the transposition has no effect on the output. 
Say $\{c_x,c_y,c_a,c_b\}$ has $3$ elements. 
Without loss of generality, we assume $c_y = c_b$. 
Let $r = f(\{x,y\}) = f(\{a,b\})$ and let $u_1,u_2,u_3$ 
be $u_x,u_y,u_a$ ordered from smallest to largest. 
Then, after processing $\sigma_i$ and $\sigma_{i+1}$ in either order, 
we have
\begin{align*}
	\mathtt{conn\_comp} &\gets 
	\left(\mathtt{conn\_comp} \setminus \big\{(c_x,u_x), (c_y,u_y), (c_a,u_a)\big\}\right) 
	\cup \big\{(c_x \cup c_y \cup c_a, \min(u_x,u_y,u_a))\big\} \\
	\mathtt{barcode} &\gets 
	\left(\mathtt{barcode} \setminus \big\{[u_x,\infty), [u_y,\infty), [u_a,\infty)\big\}\right) 
	\cup \big\{[u_1,\infty), [u_2,r), [u_3,r)\big\}
\end{align*}
So, the transposition does not effect the output.
\end{proof} 

\begin{proof}
\textbf{(of \cref{barcode-algorithm-correctness})} 
Let $\SET$ be the set of vertices of $G$, and let $H = H(G,f)$. 
By \cref{example:hierarchical-clustering-finite-set}, $H$ is a finite hierarchical clustering. 
We begin by noting some useful facts about \cref{barcode-algorithm}. 
Note first that if $G = G_1 \cup \dots \cup G_q$ is the decomposition of $G$ into connected components, 
then the output of the algorithm on $(G,f)$ is equal to the union of 
the output of the algorithm on $(G_i, f|_{G_i})$, 
and similarly the barcode of $H$ is equal to the union of the barcodes of $H(G_i, f|_{G_i})$. 
So, if $G$ is non-empty, we may assume it is connected.  

Second, note that if $T$ is a minimum spanning tree of $(G,f)$, 
then we have $H(G,f) = H(T,f|_{T})$. 
Furthermore, the output of the algorithm on $(G,f)$ is equal to the output on $(T,f|_{T})$. 
To see this, order the simplices of $G$ such that, among all simplices $\sigma$ with $f(\sigma) = r$, 
we first take all the vertices, then all the edges in $T$, then all the edges not in $T$. 
Now, if we run the algorithm on $(G,f)$ and process an edge $e = \{x,y\}$ not in $T$, 
then the algorithm does nothing at this step, 
since $T$ contains a path between $x$ and $y$ such that every edge on this path has $f$ value less than or equal to $f(e)$. 
So, we may assume $G$ is a tree.

Third, note that if $G$ is a tree with an edge $e = \{x,y\}$ such that $f(e) = f(x)$, 
then the output of the algorithm on $(G,f)$ is equal to the output on $(\tilde{G}, \tilde{f})$, 
where $\tilde{G} = G / e$ arises from contracting the edge $e$, 
and $\tilde{f}(\sigma) = f(\sigma)$ for $\sigma \not\in \{x,y,e\}$, 
and $\tilde{f}(v_e) = f(y)$, where $v_e$ is the vertex onto which $e$ contracts. 
And, the barcodes of $H(G,f)$ and $H(\tilde{G}, \tilde{f})$ are equal; 
to see this, note that $H(G,f)$ and $H(\tilde{G}, \tilde{f})$ are $0$-interleaved 
with respect to the correspondence that identifies $x,y$ with $v_e$, 
so that there is a $0$-matching between the barcodes of $H(G,f)$ and $H(\tilde{G}, \tilde{f})$ 
by \cref{bottleneck-correspondence-interleaving}. 
So, we may assume $G$ contains no edges $e$ such that $f(e) = f(v)$ for $v \in e$.

Fourth, say that $G$ contains no edges $e$ such that $f(e) = f(v)$ for $v \in e$. 
Then for $x \in X$, there is a leaf $\bfC$ of $H$ such that $\bfC(\birth(\bfC)) = \{x\}$. 
This defines a bijection $X \iso \leaves(H)$.

We prove the correctness of the algorithm by induction on the number of leaves of $H$. 
Consider the case $|\leaves(H)| = 0$. 
Then $G$ is empty and the output of the algorithm is correct. 
Consider the case $|\leaves(H)| = 1$.  
We may assume $G$ contains no edges $e$ such that $f(e) = f(v)$ for $v \in e$, and thus  $|X|=1$. 
Then it's straightforward to check that the output of the algorithm is correct. 

Consider the case $|\leaves(H)| \geq 2$. 
Let $\bfC$ be a minimal leaf of $H$. 
We may assume $G$ is connected, so we have $\death(\bfC) < \infty$. 
We may assume $G$ contains no edges $e$ such that $f(e) = f(v)$ for $v \in e$, 
and thus $U(\bfC) = \{x\}$ for some $x \in X$. 
Let $f' : G \to \bbR$ coincide with $f$ on $G \setminus \{x\}$ 
and have $f'(x) = \death(\bfC)$. 
In this case, $H(G,f') = H \setminus \bfC$. 
By \cref{lemma:removing-leaf} and the inductive hypothesis, 
the algorithm is correct on the input $(G,f')$. 
So, by \cref{proposition:elder-rule}, it is enough to prove that the output of the algorithm on $(G,f)$ 
is equal to the union of the output on $(G,f')$ and the interval $\life(\bfC)$. 
Say we run the algorithm on $(G,f)$ using the ordering $[\sigma_1, \dots, \sigma_p]$. 
Let $\sigma_i = \{x,y\}$ be the first edge adjacent to $x$ in this ordering. 
We run the algorithm on $(G,f')$ with the ordering obtained from $[\sigma_1, \dots, \sigma_p]$ 
by moving $x$ to the first position such that the ordering satisfies the assumptions of the algorithm. 
Let $(d,v)$ be the connected component with $y \in d$ after processing $\sigma_{i-1}$ 
(this is the same whether running the algorithm on $(G,f)$ or $(G,f')$). 
We have $f(\sigma_i) = \death(\bfC)$, 
and as $\bfC$ is a minimal leaf, we have $v \leq f(x)$, else $H$ would contain a leaf shorter than $\bfC$. 
After processing $\sigma_i$ on the input $(G,f)$, $\mathtt{conn\_comp}$ and $\mathtt{barcode}$ are updated as
\begin{align*}
	\mathtt{conn\_comp} &\gets 
	\left(\mathtt{conn\_comp} \setminus \big\{(\{x\},f(x)), (d,v)\big\}\right) 
	\cup \big\{(\{x\}\cup d,v)\big\}\\
	\mathtt{barcode} &\gets 
	\left(\mathtt{barcode} \setminus \big\{[f(x),\infty)\big\}\right) \cup \big\{[f(x),\death(\bfC))\big\}
\end{align*}
After processing $\sigma_i$ on the input $(G,f')$, the variable $\mathtt{conn\_comp}$ is updated in the same way, 
and $\mathtt{barcode}$ is updated as
\[
	\mathtt{barcode} \gets 
	\left(\mathtt{barcode} \setminus \big\{[\death(\bfC),\infty)\big\}\right) \cup \big\{[\death(\bfC),\death(\bfC))\big\}.
\]
It follows that the output of the algorithm on $(G,f)$ 
is equal to the union of the output on $(G,f')$ and the interval $\life(\bfC)$, as desired.
\end{proof}

\begin{lemma}
    \label{lemma:best-matching-is-identity}
    Let $P,Q : \bbN \to [0,\infty]$ be prominence diagrams and let $\sigma : \bbN \to \bbN$ be a bijection.
    Then, 
    \[
        \sup_{i \in \bbN} |P(i) - Q(i)| \leq \sup_{i \in \bbN} |P(i) - Q(\sigma(i))|.
    \]
\end{lemma}
\begin{proof}
    Consider the bijection $\sigma_1 : \bbN \to \bbN$ given by
    \[
        \sigma_1(i) =
        \begin{cases}
        0 & \text{ if $i = 0$}\\
        \sigma(0) & \text{ if $i = \sigma^{-1}(0)$}\\
        \sigma(i) & \text{ otherwise}.
        \end{cases}
    \]
    In words, the bijection $\sigma_1$ coincides with $\sigma$, except that it maps $0$ to $0$ and $\sigma^{-1}(0)$ to $\sigma(0)$.
    We then proceed inductively by considering a bijection $\sigma_{j+1}$ which coincides with $\sigma_j$ except that it maps $j$ to $j$ and $\sigma_{j}^{-1}(j)$ to $\sigma_{j}(j)$.

    As $j \to \infty$, the bijection $\sigma_j$ converges pointwise to the identity function $\bbN \to \bbN$.
    Since both $P(i)$ and $Q(i)$ converge to $0$ as $i \to \infty$, it follows that $\sup_{i \in \bbN} |P(i) - Q(\sigma_{j}(i))|$ converges to $\sup_{i \in \bbN} |P(i) - Q(i)|$ as $j \to \infty$.
    Then, the result follows from the following claim.

    \medskip
    \noindent \textit{Claim.} 
    For all $j \geq 1 \in \bbN$, we have
    \[
        \sup_{i \in \bbN} |P(i) - Q(\sigma_{j+1}(i))| \;\leq \; \sup_{i \in \bbN} |P(i) - Q(\sigma_j(i))|.
    \]

    \smallskip
    \noindent \textit{Proof of claim.}
    Since $\sigma_j$ and $\sigma_{j+1}$ coincide except on $j$ and $\sigma^{-1}_j(j)$, it is sufficient to prove that 
    \begin{align*}
        \max\left( | P(j) - Q(\sigma_{j+1}(j)) | , |P(\sigma^{-1}_j(j)) - Q(\sigma_{j+1}(\sigma^{-1}_j(j)))| \right)\\
        \leq \max\left( | P(j) - Q(\sigma_j(j)) | , |P(\sigma^{-1}_j(j)) - Q(j)| \right)
    \end{align*}
    By definition of $\sigma_{j+1}$, the left-hand side of the inequality is equal to 
    \[
        \max\left( | P(j) - Q(j) | , |P(\sigma^{-1}_j(j)) - Q(\sigma_j(j))| \right).
    \]
    Recall that, if $a \leq a' \in \bbR$ and $b \leq b' \in \bbR$ we have $\max(|a - b|, |a' - b'|) \leq \max(|a- b'| , |a' - b|)$.
    It is thus enough to prove that $P(j) \geq P(\sigma^{-1}_j(j))$ and $Q(j) \geq Q(\sigma_j(j))$.
    This follows from the fact that we have $\sigma_j(i) = i$ for all $i < j$ and thus $j \leq \sigma_j^{-1}(j)$ and $j \leq \sigma_j(j)$.
\end{proof}

\begin{proof}
\textbf{(of \cref{lemma:stability-prominence})} 
    Let $\barc(H) = \{A_0, \dots, A_k\}$ and $\barc(E) = \{B_0, \dots, B_m\}$, such that $\prom(H)(i) = \length(A_i)$ for $0 \leq i \leq k$ and $\prom(E) = \length(B_i)$ for $0 \leq i \leq m$.
    By \cref{bottleneck-correspondence-interleaving}, there exists an $\epsilon$-matching $f : \{0, \dots, k\} \to \{0, \dots, m\}$ between $\barc(H)$ and $\barc(E)$.
    We can thus extend the matching $f$ to a bijection $f : \bbN \to \bbN$ and any such extension has the property that $|\prom(H)(i) - \prom(E)(f(i))| \leq 2\epsilon$.
    This is because, if $i \in \{0, \dots, m\}$ is in the domain of $f$, then $|\length(A_i) - \length(B_{f(i)})| \leq 2\epsilon$, if $i \in \{0, \dots, m\}$ is not in the domain of $f$, then $|\length(A_i)| \leq 2\epsilon$, and if $j \in \{0, \dots, m\}$ is not in the codomain of $f$, then $|\length(B_i)| \leq 2\epsilon$.
    Now, the result follows from \cref{lemma:best-matching-is-identity}.
\end{proof}

\begin{proof}
\textbf{(of \cref{lemma:stability-prominence-essentially-finite})} 
By \cref{lemma:stability-prominence}, 
$d_\infty(\prom(H_{\geq \tau}), \prom(E_{\geq \tau})) \leq 2 \, \dWI(H_{\geq \tau},E_{\geq \tau})$ 
for any $\tau > 0$. 
By \cref{stability-persistence-pruning}, 
$\dWI(H_{\geq \tau},E_{\geq \tau}) \leq \dWI(H,E)$. 
So, $d_\infty(\prom(H_{\geq \tau}), \prom(E_{\geq \tau})) \leq 2 \, \dWI(H,E)$. 
By definition, as $\tau \to 0$, 
we have $d_\infty(\prom(H_{\geq \tau}), \prom(H)) \to 0$. 
So, letting $\tau \to 0$ and using the triangle inequality, 
we have $d_\infty(\prom(H), \prom(E)) \leq 2 \, \dWI(H,E)$.
\end{proof}

\subsection{Details from \cref{Pruning-and-flattening}} \label{appendix-simplification}


\begin{proposition}\label{stability-persistence-pruning}
    Let $H$ and $E$ be $\bbR$-hierarchical clusterings of sets $X$ and $Y$ respectively, and let $\tau \geq 0$.
    The hierarchical clusterings $H_{\geq \tau}$ and $H$ are $\tau$-interleaved
    and, if $H$ and $E$ are $\epsilon$-interleaved with respect to a correspondence $R \subseteq X \times Y$,
    then $H_{\geq \tau}$ and $E_{\geq \tau}$ are $\epsilon$-interleaved with respect to $R$.
\end{proposition}

\begin{proof}
The fact that $H_{\geq \tau}$ and $H$ are $\tau$-interleaved follows immediately from the definitions. 
We show that $H_{\geq \tau}$ and $E_{\geq \tau}$ are $\epsilon$-interleaved with respect to $R$. 
Let $r \in \bbR$ and let $C \in H_{\geq \tau}(r)$. 
As $H$ and $E$ are $\epsilon$-interleaved with respect to $R$, 
there is $D \in E(r + \epsilon)$ such that $\pi_X^{-1}(C) \subseteq \pi_Y^{-1}(D)$. 
As $C \in H_{\geq \tau}(r)$, there is $C' \in H(r-\tau)$ with $C' \subseteq C$. 
Again by the interleaving, 
there is $D' \in E(r - \tau + \epsilon)$ such that $\pi_X^{-1}(C') \subseteq \pi_Y^{-1}(D')$. 
It follows that $D' \subseteq D$, and thus $D \in E_{\geq \tau}(r+\epsilon)$, as desired.
\end{proof}

\begin{notation} \label{notation-R_X}
Let $H$ and $E$ be $\bbR$-hierarchical clusterings of sets $X$ and $Y$ respectively. 
Let $\epsilon \geq 0$, and let $R \subseteq X \times Y$ be a correspondence such that 
$H$ and $E$ are $\epsilon$-interleaved with respect to $R$. 
For $r \in \bbR$, we write $R_X : H(r) \to E(r + \epsilon)$ 
for the function such that $\pi_X^{-1}(A) \subseteq \pi_Y^{-1}(R_X (A))$ 
for all $A \in H(r)$.
\end{notation}

An \define{interval} of a poset $P$ (\cref{poset-def}) consists of a subset 
$I \subseteq P$ such that whenever we have $x \preceq y \preceq z \in P$
with $x,z \in I$, we also have $y \in I$. 
A subset $T$ of a poset $P$ is \define{totally-ordered} if, 
for all $x, y \in T$, we have $x \preceq y$ or $y \preceq x$. 
Let $\inter(P)$ denote the set of totally-ordered intervals of $P$. 
Let $H$ and $E$ be $\bbR$-hierarchical clusterings of sets $X$ and $Y$ respectively, 
and assume there is $\epsilon \geq 0$ such that $H$ and $E$ are $\epsilon$-interleaved with respect to
a correspondence $R \subseteq X \times Y$. 
Define a function $i_X : \PC(H) \to \inter(\PC(E))$ by mapping
a persistent cluster $\bfC$ to the totally-ordered interval 
$\{[R_X(\bfC(r))]\}_{r \in \life(\bfC)}$. 
If $H$ and $E$ are finite, 
then we get an order-preserving function  
$m_X : \PC(H) \to \PC(E)$
by mapping $\bfC$ to $\min(i_X(\bfC))$. 
Note that this depends on $\epsilon$. 

\begin{lemma} \label{m_X-properties}
Let $H$ and $E$ be finite $\bbR$-hierarchical clusterings of sets $X$ and $Y$ respectively, 
and assume there is $\epsilon \geq 0$ such that $H$ and $E$ are $\epsilon$-interleaved with respect to
a correspondence $R \subseteq X \times Y$. 
Let $\bfC \in \PC(H)$.
\begin{enumerate}
    \item We have $\birth(m_X(\bfC)) \leq \birth(\bfC) + \epsilon$. \label{part-a}
    \item Assume all the leaves of $H$ and $E$ have length strictly greater than $2\epsilon$.
        If $\bfC \in \leaves(H)$ and $\bfD \in \leaves(E)$,
        then $\bfD \leq m_X(\bfC)$ if and only if $\bfC \leq m_Y(\bfD)$. \label{part-d}
\end{enumerate}
\end{lemma}
\begin{proof}
For part $(1)$, we can choose $r > \birth(\bfC)$ that is arbitrarily close to $\birth(\bfC)$ 
such that $m_X(\bfC) = [R_X(\bfC(r))]$, and thus $r + \epsilon \in \life(m_X(\bfC))$.

For part $(2)$, say we have $\bfC \in \leaves(H)$ and $\bfD \in \leaves(E)$ 
with $\bfD \leq m_X(\bfC)$. We show that $\bfC \leq m_Y(\bfD)$. 
Choose $r_0 \in \life(\bfC)$ such that $r_0 + 2\epsilon \in \life(\bfC)$ 
and such that $m_X(\bfC) = [R_X(\bfC(r_0))]$. 
As $\bfD \leq m_X(\bfC)$, we can choose $r_1 \leq r_0 + \epsilon$ 
with $r_1 \in \life(\bfD)$ and such that $m_Y(\bfD) = [R_Y(\bfD(r_1))]$. 
We have $\bfD(r_1) \subseteq R_X(\bfC(r_0))$; choose $y \in \bfD(r_1)$. 
If $(x,y) \in R$, then as $y \in R_X(\bfC(r_0))$, 
we have $x \in R_Y(R_X(\bfC(r_0)))$. 
Meanwhile, as $r_0 + 2\epsilon \in \life(\bfC)$, 
we have $\bfC = [R_Y(R_X(\bfC(r_0)))]$, 
and so $x \in U(\bfC)$. 
Now, as $y \in \bfD(r_1)$, 
$x \in R_Y(\bfD(r_1))$, and so $x \in U(m_Y(\bfD))$. 
We have therefore shown that 
$U(\bfC) \cap U(m_Y(\bfD)) \neq \emptyset$, 
and thus $\bfC$ and $m_Y(\bfD)$ are comparable 
in the poset $\PC(H)$, by \cref{lemma:underlying-clusters-are-disjoint}. 
As $\bfC$ is a leaf, we must have $\bfC \leq m_Y(\bfD)$. 
By a symmetric argument, 
$\bfD \leq m_X(\bfC)$ if and only if $\bfC \leq m_Y(\bfD)$.
\end{proof}

\begin{lemma}\label{persistence-flattenings-coincide}
Let $H$ and $E$ be finite $\bbR$-hierarchical clusterings of sets $X$ and $Y$ respectively, 
and assume there is $\epsilon \geq 0$ such that $H$ and $E$ are $\epsilon$-interleaved with respect to
a correspondence $R \subseteq X \times Y$. 
If the leaves of $H$ and $E$ all have length strictly greater than $2\epsilon$, 
then $m_X$ restricts to a bijection $\leaves(H) \to \leaves(E)$ such that 
$\bfC$ and $m_X(\bfC)$ are $\epsilon$-interleaved with respect to $R$ for every $\bfC \in \leaves(H)$.
\end{lemma}

\begin{proof}
Let $\bfC \in \leaves(H)$. 
We start by proving that $m_X(\bfC)$ is a leaf; 
a symmetric argument shows that $m_Y$ sends leaves to leaves. 
Let $\bfD \in \leaves(E)$ with $\bfD \leq m_X(\bfC)$; 
we have $\bfC \leq m_Y(\bfD)$, by \cref{m_X-properties} (2). 
Towards a contradiction, say $\bfD \neq m_X(\bfC)$. 
As $\length(\bfD) > 2\epsilon$, we have 
$\birth(\bfD) + 2\epsilon < \birth(m_X(\bfC)) 
\leq \birth(\bfC) + \epsilon 
\leq \birth(m_Y(\bfD)) + \epsilon$. 
So, we have 
$\birth(\bfD) + \epsilon < \birth(m_Y(\bfD))$, 
which contradicts \cref{m_X-properties} (1). 
So, $\bfD = m_X(\bfC)$, and thus $m_X(\bfC)$ is a leaf. 
It follows that we have $m_Y(m_X(\bfC)) = \bfC$ for any leaf $\bfC$, 
since $\bfC \leq m_Y(m_X(\bfC))$ and $\bfC$ and $m_Y(m_X(\bfC))$ are both leaves.    
Together with a symmetric argument, 
this shows that $m_X$ and $m_Y$ restrict to inverse bijections on leaves. 
The proof that $\bfC$ and $m_X(\bfC)$ are $\epsilon$-interleaved is straightforward.
\end{proof}

Next, we need a lemma that describes the barcode of $H_{\geq \tau}$ in terms of the barcode of $H$. 
Let $H$ be a pointwise finite $I$-hierarchical clustering with $I \subseteq \bbR$ an interval, 
and let $\barc(H) = \{\barA_j\}_{j \in J}$. 
For $\tau > 0$, let $\barc(H)_\tau = \{\tilde{\barA}_j\}_{j \in \tilde{J}}$, 
where for any $j \in J$, $\tilde{\barA}_j \subset \barA_j$ 
is the sub-interval $\tilde{\barA}_j = \{ x \in \barA_j : x - \tau \in \barA_j \}$, 
and $\tilde{J} \subseteq J$ consists of $j$ such that $\tilde{\barA}_j \neq \emptyset$.

\begin{lemma} \label{barcode-of-pruning}
Let $H$ be a pointwise finite $I$-hierarchical clustering with $I \subseteq \bbR$ an interval. 
For any $\tau > 0$, $\barc(H_{\geq \tau}) = \barc(H)_\tau$.
\end{lemma}

\begin{proof}
We need to check that, for all $r \leq r'$, we have
$\rk(H_{\geq \tau}) = | \{ j \in \tilde{J} : r,r' \in \tilde{\barA}_j \} |$. 
As $H_{\geq \tau}(r) = \Im \, H(r-\tau \leq r)$, 
we have $\Im \, H_{\geq \tau} (r \leq r') = \Im \, H (r-\tau \leq r')$. 
And by definition of the barcode, we have 
$|\Im \, H (r-\tau \leq r')| = | \{ j \in J : r-\tau,r' \in \barA_j \} |$. 
Now, for any $j \in J$, we have $r-\tau, r' \in \barA_j$ if and only if we have $r,r' \in \tilde{\barA}_j$. 
So, $| \{ j \in J : r-\tau,r' \in \barA_j \} | = | \{ j \in \tilde{J} : r,r' \in \tilde{\barA}_j \} |$. 
This finishes the proof.
\end{proof}

\begin{lemma} \label{length-leaf-inequality}
Let $H$ be an essentially finite $I$-hierarchical clustering with $I \subseteq \bbR$ an interval. 
Let $\gapindex \geq 1$ and assume $\gap_\gapindex(H)$ is non-empty. 
If $\tau \in \gap_\gapindex(H)$, 
then for every $\bfC \in \leaves(H_{\geq \tau})$, 
we have $\length(\bfC) \geq \prom(H)(\gapindex - 1) - \tau$.
\end{lemma}

\begin{proof}
First, say $H$ is finite. Then 
$\min_{\bfC \in \leaves(H_{\geq \tau})} \length(\bfC) = \min_{B \in \barc(H_{\geq \tau})} \length(B)$ 
by \cref{corollary:leaf-bar-length}. 
As $H$ is finite, it is pointwise finite, so we can apply \cref{barcode-of-pruning}. 
Let $(\ell_0, \dots, \ell_k)$ be the lengths of the intervals in $\barc(H) = \{\barA_j\}_{j \in J}$ 
ordered from largest to smallest, as in \cref{definition:prom-finite}. 
We have $\barc(H_{\geq \tau}) = \{\tilde{\barA}_j\}_{j \in \tilde{J}}$, 
and thus the lengths of the intervals in $\barc(H_{\geq \tau})$, ordered from largest to smallest, 
are $(\tilde{\ell}_0, \dots, \tilde{\ell}_{\gapindex-1})$, 
where $\tilde{\ell}_p = \ell_p - \tau$. Thus, 
$\min_{B \in \barc(H_{\geq \tau})} \length(B) = \tilde{\ell}_{\gapindex-1} = \ell_{\gapindex-1} - \tau 
    = \prom(H)(\gapindex - 1) - \tau$. 
Now we consider the case where $H$ is essentially finite. 
Let $0 < \sigma < \tau$. As $\sigma \to 0$, we have 
$\prom(H_{\geq \sigma})(\gapindex - 1) \to \prom(H)(\gapindex - 1)$ and 
$\prom(H_{\geq \sigma})(\gapindex) \to \prom(H)(\gapindex)$. 
So, if $\sigma$ is small enough, $\gap_\gapindex(H_{\geq \sigma})$ is non-empty and 
$\tau \in \gap_\gapindex(H_{\geq \sigma})$. 
We have $H_{\geq \tau} = (H_{\geq \sigma})_{\geq \tau - \sigma}$. 
Applying the finite case, we have, for every $\bfC \in \leaves(H_{\geq \tau})$, 
$\length(\bfC) \geq \prom(H_{\geq \sigma})(\gapindex - 1) - (\tau - \sigma)$. 
As $\sigma \to 0$, we have 
$\prom(H_{\geq \sigma})(\gapindex - 1) - (\tau - \sigma) \to \prom(H)(\gapindex - 1) - \tau$, 
which finishes the proof.
\end{proof}

\begin{proof}
\textbf{(of \cref{stability-PF-in-tau})}
Without loss of generality, $\tau < \tau'$. 
Let $\bfC \in \leaves(H_{\geq \tau})$. 
By \cref{length-leaf-inequality}, $\length(\bfC) \geq \prom(H)(\gapindex-1) - \tau$, 
and thus $\length(\bfC) > \tau' - \tau$. 
So, there is $r \in \bbR$ such that, with $C = \bfC(r)$, 
$C' := H(r < r + (\tau' - \tau))(C) \in \bfC$. 
By construction, $C' \in H_{\geq \tau'}(r + (\tau' - \tau))$. 
It is easy to check that $[C'] \in \leaves(H_{\geq \tau'})$. 
Define $m$ by setting $m(\bfC) = [C']$. 
We show $m$ is injective. Let $\bfC, \bfD \in \leaves(H_{\geq \tau})$, 
and say $[C'] = [D']$, using the notation from above. 
Without loss of generality, $C' \subseteq D'$. 
Then $U(\bfC) \cap U(\bfD) \neq \emptyset$, 
so by \cref{lemma:underlying-clusters-are-disjoint}, 
$\bfC \leq \bfD$ or $\bfD \leq \bfC$. 
As $\bfC$ and $\bfD$ are leaves, it follows that $\bfC = \bfD$.

We show $m$ is a bijection by showing $|\leaves(H_{\geq \tau})| = |\leaves(H_{\geq \tau'})|$. 
By \cref{proposition:elder-rule}, for any finite $\bbR$-hierarchical clustering $E$, 
$|\leaves(E)| = |\barc(E)|$. 
By an argument like the last step of the proof of \cref{length-leaf-inequality}, 
we have $|\barc(H_{\geq \tau})| = |\barc(H_{\geq \tau'})|$. 
Thus, $|\leaves(H_{\geq \tau})| = |\leaves(H_{\geq \tau'})|$ 
and therefore $m$ is a bijection. 
It is easy to check that for any $\bfC \in \leaves(H_{\geq \tau})$, 
$U(\bfC) = U(m(\bfC))$.
\end{proof}


\begin{proof}
\textbf{(of \cref{stability-tomato-flattening})}
By definition, $\PF(H, \gapindex) = \leaves(H_{\geq \tau_H})$, 
where $\tau_H = (\prom(H)(\gapindex-1) + \prom(H)(\gapindex))/2$, and 
$\PF(E, \gapindex) = \leaves(E_{\geq \tau_E})$, 
where $\tau_E = (\prom(E)(\gapindex-1) + \prom(E)(\gapindex))/2$. 
By \cref{stability-persistence-pruning}, 
$H_{\geq \tau_H}$ and $E_{\geq \tau_H}$ are $\epsilon$-interleaved with respect to $R$, 
and $E_{\geq \tau_H}$ and $E_{\geq \tau_E}$ are $|\tau_H - \tau_E|$-interleaved. 
So, $H_{\geq \tau_H}$ and $E_{\geq \tau_E}$ are 
$(\epsilon + |\tau_H - \tau_E|)$-interleaved with respect to $R$. 
By \cref{lemma:stability-prominence-essentially-finite}, 
$d_\infty(\prom(H), \prom(E)) \leq 2 \, \dWI(H,E) \leq 2\epsilon$. 
So, $|\tau_H - \tau_E| \leq 2\epsilon$, and thus $H_{\geq \tau_H}$ and $E_{\geq \tau_E}$ are 
$3\epsilon$-interleaved with respect to $R$.

For any $\bfC \in \leaves(H_{\geq \tau_H})$, \cref{length-leaf-inequality} implies that 
$\length(\bfC) \geq \prom(H)(\gapindex - 1) - \tau_H = (\prom(H)(\gapindex - 1) - \prom(H)(\gapindex))/2 
> 8\epsilon$. 
Similarly, for any $\bfD \in \leaves(E_{\geq \tau_E})$, we have 
$\length(\bfD) \geq \prom(E)(\gapindex - 1) - \tau_E = (\prom(E)(\gapindex - 1) - \prom(E)(\gapindex))/2 
\geq ((\prom(H)(\gapindex - 1) - \prom(H)(\gapindex))/2) - 2\epsilon > 6\epsilon$. 
As $H_{\geq \tau_H}$ and $E_{\geq \tau_E}$ are finite hierarchical clusterings that are 
$3\epsilon$-interleaved with respect to $R$, 
and the leaves of $H_{\geq \tau_H}$ and $E_{\geq \tau_E}$ all have length strictly greater than $6\epsilon$, 
\cref{persistence-flattenings-coincide} implies that there is a bijection 
$m : \PF(H, \gapindex) \to \PF(E, \gapindex)$ 
such that for all $\bfC \in \PF(H, \gapindex)$, 
$\bfC$ and $m(\bfC)$ are $3\epsilon$-interleaved with respect to $R$.
\end{proof}

\begin{remark} \label{exhaustive-PF-ToMATo}
We describe the relationship between {\sc ExhaustivePF} (\cref{tomato-flattening-algorithm}) 
and ToMATo \citep[Algorithm 1]{chazal-guibas-oudot-skraba}. 
In line 10 of {\sc ExhaustivePF} we use $\leq$ rather than $<$ in order to follow the behavior of the persistence-based flattening. 
In this remark, we consider the version of {\sc ExhaustivePF} that uses $<$ in line 10. 
Say given the input $(G,\tilde{f},\tau)$ to ToMATo. 
Extend the filtering function to edges, by setting 
$\tilde{f}(\{x,y\}) = \min\{\tilde{f}(x),\tilde{f}(y)\}$. 
Reverse the filtration, by setting 
$f(\sigma) = -\tilde{f}(\sigma)$ for $\sigma \in G$.
Order the simplices of $G$ as follows. 
Begin with any ordering of the vertices $x_1, \dots, x_q$ such that $f(x_i) \leq f(x_{i+1})$. 
For each $i$, insert directly after $x_i$ 
those edges $\{x_j, x_i\}$ with $j < i$, ordered by $j$. 
Now, the output of {\sc ExhaustivePF} on this input 
agrees with the output of ToMATo on $(G,\tilde{f},\tau)$, 
with one exception: 
ToMATo excludes clusters $C$ such that $\max_{x \in C} \tilde{f}(x) < \tau$.
\end{remark}

\subsection{Details from \cref{Persistable}} \label{appendix-Persistable}

\begin{table*}[b!]\centering
\begin{tabular}{|c|c|c|c|}
\hline
$k$ & Memory (GB) & Time (s) & Number of clusters \\
\hline
$20$ & 0.46 & 65 & 6524\\\hline
$40$ & 0.82 & 58 & 2618\\\hline
$80$ & 1.54 & 64 & 1057\\\hline
$160$ & 2.99 & 68 & 473\\\hline
$320$ & 5.88 & 79 & 230\\\hline
\end{tabular}
\caption{Evaluation of the HDBSCAN implementation of \cite{mcinnes-healy-astels-joss} 
on the Rideshare data set. 
The columns are the algorithm parameter $k = \mathsf{min\_samples}$, 
the peak memory usage in GB, the runtime in seconds, and the number of clusters. 
The algorithm parameter 
$\mathsf{min\_cluster\_size}$ is set equal to $\mathsf{min\_samples}$ 
(the default). 
The implementation includes several algorithms for computing HDBSCAN. 
We use the Dual-Tree Bor\r{u}vka algorithm with kd-trees 
(the default choice for low-dimensional Euclidean data, as in this case), 
which is the only choice that gives reasonable performance on this data set. 
Memory usage scales linearly with $k$ 
because the algorithm stores the $k$ nearest neighbors of each data point.
}
\vspace*{2cm}
\label{hdbscan-rideshare-performance-table}
\end{table*}

%

\begin{table*}\centering
\begin{tabular}{|c||c|c|c||c|}
\hline
 & 1 & 2 & 3 & noise \\
\hline
South Italy & 293 & & & 30\\\hline
Sardinia & & 97 & & 1\\\hline
North Italy & & & 118 & 33\\\hline
\end{tabular}
\caption{A confusion matrix comparing the Persistable clustering 
(clusters 1--3 and points labeled as noise), 
and the large area labels of the Olive oil data. 
The adjusted Rand index is $1.0$, and $89\%$ of the data is clustered.}
\label{oliveoil-3-clusters-table}
\end{table*}

\begin{table*}\centering
\begin{tabular}{|c|c||c|c|c|c|c|c|c|c||c|}
\hline
 & & 1 & 2 & 3 & 4 & 5 & 6 & 7 & 8 & noise \\
\hline
\multirow{4}{*}{South Italy} & North Apulia & 12 & & & & & & & & 13\\\cline{2-11}
& Calabria & & 7 & 1 & & & & & & 48 \\\cline{2-11}
& South Apulia & & & 100 & & & & & & 106 \\\cline{2-11}
& Sicily & 3 & & & & & & & & 33 \\
\hline
\multirow{2}{*}{Sardinia} & Inland Sardinia & & & & 51 & & & & & 14 \\\cline{2-11}
& Coast Sardinia & & & & & 19 & & & & 14 \\
\hline
\multirow{3}{*}{North Italy} & East Liguria & & & & & & 14 & 1 & & 35 \\\cline{2-11}
& West Liguria & & & & & & & 29 & & 21 \\\cline{2-11}
& Umbria & & & & & & & & 42 & 9 \\
\hline
\end{tabular}
\caption{A confusion matrix comparing the Persistable clustering 
(clusters 1--8 and points labeled as noise), 
and the region labels of the Olive oil data. 
The adjusted Rand index is $0.98$, and $49\%$ of the data is clustered.}
\label{oliveoil-8-clusters-table}
\end{table*}

\newpage

\begin{table*}\centering
\begin{tabular}{|c|c||c|c|c|c|c|c|c|c||c|}
\hline
 & & 1 & 2 & 3 & 4 & 5 & 6 & 7 & 8 & noise \\
\hline
\multirow{4}{*}{South Italy} & North Apulia & 21 & 2 & & & & & & & 2\\\cline{2-11}
& Calabria & & 50 & 3 & & & 1 & & & 2\\\cline{2-11}
& South Apulia & & 1 & 196 & & & & & & 9\\\cline{2-11}
& Sicily & 6 & 20 & 7 & & & & & & 3\\
\hline
\multirow{2}{*}{Sardinia} & Inland Sardinia & & & & 65 & & & & & \\\cline{2-11}
& Coast Sardinia & & & & 2 & 31 & & & & \\
\hline
\multirow{3}{*}{North Italy} & East Liguria & & & & & & 41 & 2 & & 7\\\cline{2-11}
& West Liguria & & & & & & & 47 & & 3 \\\cline{2-11}
& Umbria & & & & & & 2 & & 48 & 1 \\
\hline
\end{tabular}
\caption{A confusion matrix comparing the Persistable clustering, 
using the exhaustive persistence-based flattening 
(clusters 1--8 and points labeled as noise), 
and the region labels of the Olive oil data. 
The adjusted Rand index is $0.90$, and $95\%$ of the data is clustered.}
\label{oliveoil-8-clusters-exhaustivePF-table}
\end{table*}

\begin{figure}[p]
    \centering
    \includegraphics[width=0.5\textwidth]{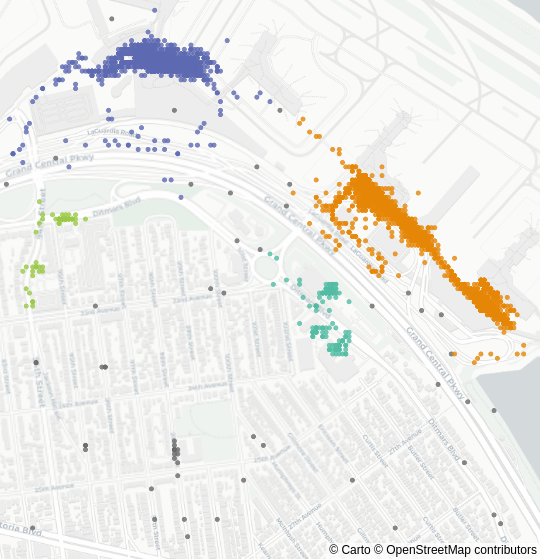}
    \caption{The Persistable clustering of the Rideshare data 
    using the slice in \cref{Persistable-laguardia-visualizations}, 
    choosing the gap below the fourth vine. 
    Gray points are unclustered.}
    \label{appendix-Persistable-laguardia-visualizations}
\end{figure}

\clearpage
\bibliography{ref}

\begin{thebibliography}{82}
\providecommand{\natexlab}[1]{#1}
\providecommand{\url}[1]{\texttt{#1}}
\expandafter\ifx\csname urlstyle\endcsname\relax
  \providecommand{\doi}[1]{doi: #1}\else
  \providecommand{\doi}{doi: \begingroup \urlstyle{rm}\Url}\fi

\bibitem[Anai et~al.(2019)Anai, Chazal, Glisse, Ike, Inakoshi, Tinarrage, and Umeda]{anai-chazal-glisse-ike-inakoshi-tinarrage}
Hirokazu Anai, Fr{\'e}d{\'e}ric Chazal, Marc Glisse, Yuichi Ike, Hiroya Inakoshi, Rapha{\"e}l Tinarrage, and Yuhei Umeda.
\newblock {DTM-Based Filtrations}.
\newblock In Gill Barequet and Yusu Wang, editors, \emph{35th International Symposium on Computational Geometry (SoCG 2019)}, volume 129 of \emph{Leibniz International Proceedings in Informatics (LIPIcs)}, pages 58:1--58:15, 2019.
\newblock \doi{10.4230/LIPIcs.SoCG.2019.58}.

\bibitem[Aragam et~al.(2020)Aragam, Dan, Xing, and Ravikumar]{Aragam-etal}
Bryon Aragam, Chen Dan, Eric~P. Xing, and Pradeep Ravikumar.
\newblock {Identifiability of nonparametric mixture models and Bayes optimal clustering}.
\newblock \emph{The Annals of Statistics}, 48\penalty0 (4):\penalty0 2277 -- 2302, 2020.
\newblock \doi{10.1214/19-AOS1887}.

\bibitem[Balakrishnan et~al.(2013)Balakrishnan, Narayanan, Rinaldo, Singh, and Wasserman]{cluster-trees-on-manifolds}
Sivaraman Balakrishnan, Srivatsan Narayanan, Alessandro Rinaldo, Aarti Singh, and Larry Wasserman.
\newblock Cluster trees on manifolds.
\newblock In C.~J.~C. Burges, L.~Bottou, M.~Welling, Z.~Ghahramani, and K.~Q. Weinberger, editors, \emph{Advances in Neural Information Processing Systems 26}, pages 2679--2687. Curran Associates, Inc., 2013.

\bibitem[Bauer and Lesnick(2015)]{bauer-lesnick-matchings}
Ulrich Bauer and Michael Lesnick.
\newblock Induced matchings and the algebraic stability of persistence barcodes.
\newblock \emph{Comput. Geom.}, 6:\penalty0 162--191, 2015.
\newblock \doi{10.20382/jocg.v6i2a9}.

\bibitem[Bauer et~al.(2017)Bauer, Munk, Sieling, and Wardetzky]{bauer-munk-sieling-wardetzky}
Ulrich Bauer, Axel Munk, Hannes Sieling, and Max Wardetzky.
\newblock Persistence barcodes versus {Kolmogorov} signatures: Detecting modes of one-dimensional signals.
\newblock \emph{Foundations of Computational Mathematics}, 17:\penalty0 1--33, 2017.
\newblock \doi{10.1007/s10208-015-9281-9}.

\bibitem[Bauer et~al.(2020)Bauer, Botnan, Oppermann, and Steen]{bauer-etal-hierarchical-clustering}
Ulrich Bauer, Magnus Botnan, Steffen Oppermann, and Johan Steen.
\newblock Cotorsion torsion triples and the representation theory of filtered hierarchical clustering.
\newblock \emph{Advances in Mathematics}, 369:\penalty0 107171, 2020.
\newblock \doi{10.1016/j.aim.2020.107171}.

\bibitem[Biasotti et~al.(2008)Biasotti, Cerri, Frosini, Giorgi, and Landi]{biasotti-cerri-frosini-giorgi-landi}
Silvia Biasotti, Andrea Cerri, Patrizio Frosini, Daniela Giorgi, and Claudia Landi.
\newblock Multidimensional size functions for shape comparison.
\newblock \emph{Journal of Mathematical Imaging and Vision}, 32:\penalty0 161--179, 2008.
\newblock \doi{https://doi.org/10.1007/s10851-008-0096-z}.

\bibitem[Biau et~al.(2007)Biau, Cadre, and Pelletier]{biau-cadre-pelletier}
G\'erard Biau, Beno\^{i}t Cadre, and Bruno Pelletier.
\newblock A graph-based estimator of the number of clusters.
\newblock \emph{ESAIM: Probability and Statistics}, 11:\penalty0 272--280, 6 2007.
\newblock \doi{https://doi.org/10.1051/ps:2007019}.

\bibitem[Billingsley(1999)]{billingsley}
Patrick Billingsley.
\newblock \emph{Convergence of Probability Measures, Second Edition}.
\newblock Wiley Series in Probability and Statistics. Wiley, 1999.
\newblock \doi{10.1002/9780470316962}.

\bibitem[Blumberg and Lesnick(2022)]{blumberg-lesnick-2param-published}
Andrew~J. Blumberg and Michael Lesnick.
\newblock Stability of 2-parameter persistent homology.
\newblock \emph{Foundations of Computational Mathematics}, 2022.
\newblock \doi{10.1007/s10208-022-09576-6}.

\bibitem[Bobrowski et~al.(2017)Bobrowski, Mukherjee, and Taylor]{bobrowski-mukherjee-taylor}
Omer Bobrowski, Sayan Mukherjee, and Jonathan~E. Taylor.
\newblock Topological consistency via kernel estimation.
\newblock \emph{Bernoulli}, 23\penalty0 (1):\penalty0 288--328, 2017.
\newblock \doi{10.3150/15-BEJ744}.

\bibitem[Botnan et~al.(2022)Botnan, Oppermann, and Oudot]{botnan-oppermann-oudot}
Magnus~Bakke Botnan, Steffen Oppermann, and Steve Oudot.
\newblock {Signed Barcodes for Multi-Parameter Persistence via Rank Decompositions}.
\newblock In Xavier Goaoc and Michael Kerber, editors, \emph{38th International Symposium on Computational Geometry (SoCG 2022)}, volume 224 of \emph{Leibniz International Proceedings in Informatics (LIPIcs)}, pages 19:1--19:18, 2022.
\newblock \doi{10.4230/LIPIcs.SoCG.2022.19}.

\bibitem[Buchin et~al.(2015)Buchin, Buchin, van Kreveld, Speckmann, and Staals]{buchin-etal-trajectory}
Kevin Buchin, Maike Buchin, Marc van Kreveld, Bettina Speckmann, and Frank Staals.
\newblock Trajectory grouping structure.
\newblock \emph{Journal of Computational Geometry}, 6, 2015.
\newblock \doi{https://doi.org/10.20382/jocg.v6i1a3}.

\bibitem[Burago et~al.(2001)Burago, Burago, and Ivanov]{burago-etal}
Dmitri Burago, Yuri Burago, and Sergei Ivanov.
\newblock \emph{A Course in Metric Geometry}, volume~33 of \emph{Graduate Studies in Mathematics}.
\newblock American Mathematical Society, 2001.
\newblock \doi{http://dx.doi.org/10.1090/gsm/033}.

\bibitem[Cagliari and Landi(2011)]{cagliari-landi}
Francesca Cagliari and Claudia Landi.
\newblock Finiteness of rank invariants of multidimensional persistent homology groups.
\newblock \emph{Appl. Math. Lett.}, 24\penalty0 (4):\penalty0 516--518, 2011.
\newblock ISSN 0893-9659.
\newblock \doi{10.1016/j.aml.2010.11.004}.
\newblock URL \url{https://doi.org/10.1016/j.aml.2010.11.004}.

\bibitem[Cagliari et~al.(2010)Cagliari, Di~Fabio, and Ferri]{cagliari-difabio-ferri}
Francesca Cagliari, Barbara Di~Fabio, and Massimo Ferri.
\newblock One-dimensional reduction of multidimensional persistent homology.
\newblock \emph{Proc. Amer. Math. Soc.}, 138\penalty0 (8):\penalty0 3003--3017, 2010.
\newblock ISSN 0002-9939.
\newblock \doi{10.1090/S0002-9939-10-10312-8}.

\bibitem[Cai et~al.(2020)Cai, Kim, M{\'e}moli, and Wang]{cai-kim-memoli-wang}
Chen Cai, Woojin Kim, Facundo M{\'e}moli, and Yusu Wang.
\newblock {Elder-Rule-Staircodes for Augmented Metric Spaces}.
\newblock In Sergio Cabello and Danny~Z. Chen, editors, \emph{36th International Symposium on Computational Geometry (SoCG 2020)}, volume 164 of \emph{Leibniz International Proceedings in Informatics (LIPIcs)}, pages 26:1--26:17, 2020.
\newblock \doi{10.4230/LIPIcs.SoCG.2020.26}.

\bibitem[Campello et~al.(2013)Campello, Moulavi, and Sander]{campello-moulavi-sander}
Ricardo J. G.~B. Campello, Davoud Moulavi, and J\"{o}rg Sander.
\newblock Density-based clustering based on hierarchical density estimates.
\newblock In \emph{Advances in Knowledge Discovery and Data Mining}, volume 7819 of \emph{Lecture Notes in Computer Science}, pages 160--172. Springer, 2013.
\newblock \doi{https://doi.org/10.1007/978-3-642-37456-2_14}.

\bibitem[Carlsson and M\'{e}moli(2010{\natexlab{a}})]{carlsson-memoli-hierarchical-clustering}
Gunnar Carlsson and Facundo M\'{e}moli.
\newblock Characterization, stability and convergence of hierarchical clustering methods.
\newblock \emph{J. Mach. Learn. Res.}, 11:\penalty0 1425--1470, 2010{\natexlab{a}}.

\bibitem[Carlsson and M\'{e}moli(2010{\natexlab{b}})]{carlsson-memoli-multid}
Gunnar Carlsson and Facundo M\'{e}moli.
\newblock Multiparameter hierarchical clustering methods.
\newblock In Hermann Locarek-Junge and Claus Weihs, editors, \emph{Classification as a Tool for Research}, pages 63--70. Springer Berlin Heidelberg, 2010{\natexlab{b}}.
\newblock ISBN 978-3-642-10745-0.

\bibitem[Carlsson and Zomorodian(2009)]{carlsson-zomorodian-multidimensional}
Gunnar Carlsson and Afra Zomorodian.
\newblock The theory of multidimensional persistence.
\newblock \emph{Discrete \& Computational Geometry}, 42\penalty0 (1):\penalty0 71 -- 93, 2009.
\newblock \doi{10.1007/s00454-009-9176-0}.

\bibitem[Carlsson et~al.(2004)Carlsson, Zomorodian, Collins, and Guibas]{carlsson-barcodes-shapes}
Gunnar Carlsson, Afra Zomorodian, Anne Collins, and Leonidas Guibas.
\newblock Persistence barcodes for shapes.
\newblock In \emph{Proceedings of the 2004 Eurographics/ACM SIGGRAPH Symposium on Geometry Processing}, page 124–135, 2004.
\newblock \doi{10.1145/1057432.1057449}.

\bibitem[Carri\`{e}re and Blumberg(2020)]{carriere-blumberg}
Mathieu Carri\`{e}re and Andrew~J. Blumberg.
\newblock Multiparameter persistence images for topological machine learning.
\newblock In \emph{Proceedings of the 34th International Conference on Neural Information Processing Systems}, NIPS'20, Red Hook, NY, USA, 2020. Curran Associates Inc.
\newblock ISBN 9781713829546.

\bibitem[Cerri et~al.(2009)Cerri, Di~Fabio, Ferri, Frosini, and Landi]{cerri-difabio-ferri-frosini-landi}
Andrea Cerri, Barbara Di~Fabio, Massimo Ferri, Patrizio Frosini, and Claudia Landi.
\newblock Multidimensional persistent homology is stable, 2009.
\newblock arXiv:0908.0064.

\bibitem[Chaudhuri and Dasgupta(2010)]{chaudhuri-dasgupta-10}
Kamalika Chaudhuri and Sanjoy Dasgupta.
\newblock Rates of convergence for the cluster tree.
\newblock In J.~D. Lafferty, C.~K.~I. Williams, J.~Shawe-Taylor, R.~S. Zemel, and A.~Culotta, editors, \emph{Advances in Neural Information Processing Systems 23}, pages 343--351. Curran Associates, Inc., 2010.
\newblock URL \url{http://papers.nips.cc/paper/4068-rates-of-convergence-for-the-cluster- tree.pdf}.

\bibitem[Chazal et~al.(2009)Chazal, Cohen-Steiner, Glisse, Guibas, and Oudot]{chazal-etal-interleavings}
Fr\'{e}d\'{e}ric Chazal, David Cohen-Steiner, Marc Glisse, Leonidas~J. Guibas, and Steve~Y. Oudot.
\newblock Proximity of persistence modules and their diagrams.
\newblock In \emph{Proceedings of the Twenty-Fifth Annual Symposium on Computational Geometry}, SCG '09, page 237–246, 2009.
\newblock \doi{10.1145/1542362.1542407}.

\bibitem[Chazal et~al.(2013)Chazal, Guibas, Oudot, and Skraba]{chazal-guibas-oudot-skraba}
Fr\'{e}d\'{e}ric Chazal, Leonidas~J. Guibas, Steve~Y. Oudot, and Primoz Skraba.
\newblock Persistence-based clustering in {R}iemannian manifolds.
\newblock \emph{J. ACM}, 60\penalty0 (6), 2013.
\newblock \doi{10.1145/2535927}.

\bibitem[Chazal et~al.(2016)Chazal, de~Silva, Glisse, and Oudot]{chazal-silva-glisse-oudot}
Fr\'{e}d\'{e}ric Chazal, Vin de~Silva, Marc Glisse, and Steve Oudot.
\newblock \emph{The structure and stability of persistence modules}.
\newblock SpringerBriefs in Mathematics. Springer, [Cham], 2016.
\newblock ISBN 978-3-319-42543-6; 978-3-319-42545-0.
\newblock \doi{10.1007/978-3-319-42545-0}.

\bibitem[Chiossi(2021)]{chiossi}
Simon~G. Chiossi.
\newblock \emph{Essential Mathematics for Undergraduates: A Guided Approach to Algebra, Geometry, Topology and Analysis}.
\newblock Springer Cham, 2021.
\newblock \doi{https://doi.org/10.1007/978-3-030-87174-1}.

\bibitem[Cohen-Steiner et~al.(2006)Cohen-Steiner, Edelsbrunner, and Morozov]{cohen-steiner-edelsbrunner-morozov}
David Cohen-Steiner, Herbert Edelsbrunner, and Dmitriy Morozov.
\newblock Vines and vineyards by updating persistence in linear time.
\newblock In \emph{Symposium on {C}omputational {G}eometry}, pages 119--126. ACM, New York, 2006.
\newblock \doi{10.1145/1137856.1137877}.

\bibitem[Cohen-Steiner et~al.(2007)Cohen-Steiner, Edelsbrunner, and Harer]{cohen-steiner-edelsbrunner-harer}
David Cohen-Steiner, Herbert Edelsbrunner, and John Harer.
\newblock Stability of persistence diagrams.
\newblock \emph{Discrete Comput Geom}, 37:\penalty0 103--120, 2007.
\newblock \doi{10.1007/s00454-006-1276-5}.

\bibitem[Cohen-Steiner et~al.(2010)Cohen-Steiner, Edelsbrunner, Harer, and Mileyko]{cohen-steiner-etal-lipschitz}
David Cohen-Steiner, Herbert Edelsbrunner, John Harer, and Yuriy Mileyko.
\newblock {Lipschitz functions have $L_p$-stable persistence}.
\newblock \emph{Foundations of Computational Mathematics}, 10:\penalty0 127--139, 2010.
\newblock \doi{10.1007/s10208-010-9060-6}.

\bibitem[Corbet et~al.(2019)Corbet, Fugacci, Kerber, Landi, and Wang]{cfklw-kernel}
Ren{\'{e}} Corbet, Ulderico Fugacci, Michael Kerber, Claudia Landi, and Bei Wang.
\newblock A kernel for multi-parameter persistent homology.
\newblock \emph{Computers \& Graphics: X}, 2:\penalty0 100005, 2019.
\newblock \doi{https://doi.org/10.1016/j.cagx.2019.100005}.

\bibitem[Crawley-Boevey(2015)]{crawley}
William Crawley-Boevey.
\newblock Decomposition of pointwise finite-dimensional persistence modules.
\newblock \emph{J. Algebra Appl.}, 14\penalty0 (5):\penalty0 1550066, 8, 2015.
\newblock \doi{10.1142/S0219498815500668}.

\bibitem[Cs\'{a}sz\'{a}r(1978)]{csaszar}
\'{A}kos Cs\'{a}sz\'{a}r.
\newblock \emph{General Topology}.
\newblock Adam Hilger Ltd, Bristol, 1978.
\newblock \doi{https://doi.org/10.1017/S0013091500027905}.

\bibitem[Cuevas and Rodríguez-Casal(2004)]{cuevas-casal}
Antonio Cuevas and Alberto Rodríguez-Casal.
\newblock On boundary estimation.
\newblock \emph{Advances in Applied Probability}, 36\penalty0 (2):\penalty0 340–354, 2004.
\newblock \doi{10.1239/aap/1086957575}.

\bibitem[Cuevas et~al.(2000)Cuevas, Febrero, and Fraiman]{Cuevas-Febrero-Fraiman}
Antonio Cuevas, Manuel Febrero, and Ricardo Fraiman.
\newblock Estimating the number of clusters.
\newblock \emph{The Canadian Journal of Statistics / La Revue Canadienne de Statistique}, 28\penalty0 (2):\penalty0 367--382, 2000.
\newblock \doi{https://doi.org/10.2307/3315985}.

\bibitem[Curry(2018)]{curry}
Justin Curry.
\newblock The fiber of the persistence map for functions on the interval.
\newblock \emph{J. Appl. Comput. Topol.}, 2\penalty0 (3-4):\penalty0 301--321, 2018.
\newblock ISSN 2367-1726.
\newblock \doi{10.1007/s41468-019-00024-z}.

\bibitem[d'Amico et~al.(2003)d'Amico, Frosini, and Landi]{amico-frosini-landi}
Michele d'Amico, Patrizio Frosini, and Claudia Landi.
\newblock Optimal matching between reduced size functions.
\newblock \emph{DISMI, Univ. di Modena e Reggio Emilia, Italy, Technical report}, 35, 2003.

\bibitem[Dey and Wang(2022)]{dey-wang}
Tamal~Krishna Dey and Yusu Wang.
\newblock \emph{Computational Topology for Data Analysis}.
\newblock Cambridge University Press, 2022.
\newblock \doi{10.1017/9781009099950}.

\bibitem[Dudley(1969)]{dudley-69}
Richard~M. Dudley.
\newblock {The Speed of Mean Glivenko-Cantelli Convergence}.
\newblock \emph{The Annals of Mathematical Statistics}, 40\penalty0 (1):\penalty0 40 -- 50, 1969.
\newblock \doi{10.1214/aoms/1177697802}.

\bibitem[Dudley(2002)]{dudley}
Richard~M. Dudley.
\newblock \emph{Real analysis and probability}, volume~74 of \emph{Cambridge Studies in Advanced Mathematics}.
\newblock Cambridge University Press, Cambridge, 2002.
\newblock ISBN 0-521-00754-2.
\newblock \doi{10.1017/CBO9780511755347}.
\newblock Revised reprint of the 1989 original.

\bibitem[Edelsbrunner and Harer(2010)]{edelsbrunner-harer}
Herbert Edelsbrunner and John Harer.
\newblock \emph{Computational Topology: An Introduction}.
\newblock American Mathematical Society, 2010.
\newblock \doi{10.1007/978-3-540-33259-6_7}.

\bibitem[Edelsbrunner et~al.(2002)Edelsbrunner, Letscher, and Zomorodian]{edelsbrunner-letscher-zomorodian}
Herbert Edelsbrunner, David Letscher, and Afra Zomorodian.
\newblock Topological persistence and simplification.
\newblock \emph{Discrete \& Computational Geometry}, 28\penalty0 (4):\penalty0 511 -- 533, 2002.
\newblock \doi{10.1007/s00454-002-2885-2}.

\bibitem[Eldridge et~al.(2015)Eldridge, Belkin, and Wang]{eldridge-belkin-wang}
Justin Eldridge, Mikhail Belkin, and Yusu Wang.
\newblock Beyond {H}artigan consistency: Merge distortion metric for hierarchical clustering.
\newblock In Peter Grünwald, Elad Hazan, and Satyen Kale, editors, \emph{Proceedings of The 28th Conference on Learning Theory}, volume~40 of \emph{Proceedings of Machine Learning Research}, pages 588--606, Paris, France, 03--06 Jul 2015. PMLR.
\newblock URL \url{http://proceedings.mlr.press/v40/Eldridge15.html}.

\bibitem[Ester et~al.(1996)Ester, Kriegel, Sander, and Xu]{dbscan}
Martin Ester, Hans-Peter Kriegel, Jörg Sander, and Xiaowei Xu.
\newblock A density-based algorithm for discovering clusters in large spatial databases with noise.
\newblock In \emph{{KDD'96}: {P}roceedings of the Second International Conference on Knowledge Discovery and Data Mining}, pages 226--231. AAAI Press, 1996.

\bibitem[Fasy et~al.(2014)Fasy, Lecci, Rinaldo, Wasserman, Balakrishnan, and Singh]{fasy-lecci-rinaldo-wasserman-balakrishnan-singh}
Brittany~Terese Fasy, Fabrizio Lecci, Alessandro Rinaldo, Larry Wasserman, Sivaraman Balakrishnan, and Aarti Singh.
\newblock {Confidence sets for persistence diagrams}.
\newblock \emph{The Annals of Statistics}, 42\penalty0 (6):\penalty0 2301 -- 2339, 2014.
\newblock \doi{10.1214/14-AOS1252}.

\bibitem[{FiveThirtyEight}(2015)]{rideshare-data}
{FiveThirtyEight}.
\newblock Uber {TLC} {FOIL} response data, 2015.
\newblock data retrieved from \url{https://github.com/fivethirtyeight/uber-tlc-foil-response}.

\bibitem[Folland(2013)]{folland}
Gerald~B. Folland.
\newblock \emph{Real Analysis: Modern Techniques and Their Applications}.
\newblock Pure and Applied Mathematics: A Wiley Series of Texts, Monographs and Tracts. Wiley, 2013.
\newblock ISBN 9781118626399.

\bibitem[Forina et~al.(1983)Forina, Armanino, Lanteri, and Tiscornia]{olive-oil-source}
Michele Forina, C.~Armanino, Sergio Lanteri, and E.~Tiscornia.
\newblock Classification of olive oils from their fatty acid composition.
\newblock In H.~Martens and H.~Russwurm~Jr, editors, \emph{Food research and data analysis : proceedings from the {IUFoST} Symposium}, pages 189--214. ill., maps, Oslo, Norway, 1983.

\bibitem[Frosini(1990)]{frosini-2}
Patrizio Frosini.
\newblock A distance for similarity classes of submanifolds of a {E}uclidean space.
\newblock \emph{Bull. Austral. Math. Soc.}, 42\penalty0 (3):\penalty0 407--416, 1990.
\newblock ISSN 0004-9727.
\newblock \doi{10.1017/S0004972700028574}.
\newblock URL \url{https://doi.org/10.1017/S0004972700028574}.

\bibitem[Frosini(1992{\natexlab{a}})]{frosini-1}
Patrizio Frosini.
\newblock Discrete computation of size functions.
\newblock \emph{J. Combin. Inform. System Sci.}, 17\penalty0 (3-4):\penalty0 232--250, 1992{\natexlab{a}}.
\newblock ISSN 0250-9628.

\bibitem[Frosini(1992{\natexlab{b}})]{frosini-4}
Patrizio Frosini.
\newblock Measuring shapes by size functions.
\newblock In \emph{Intelligent Robots and Computer Vision X: Algorithms and Techniques}, volume 1607, pages 122--133. SPIE, 1992{\natexlab{b}}.

\bibitem[Ghrist(2008)]{ghrist-barcodes}
Robert Ghrist.
\newblock Barcodes: The persistent topology of data.
\newblock \emph{Bulletin of the American Mathematical Society}, 45:\penalty0 61--75, 2008.
\newblock \doi{10.1090/S0273-0979-07-01191-3}.

\bibitem[Gibbs and Su(2002)]{gibbs-su}
Alison~L. Gibbs and Francis~Edward Su.
\newblock On choosing and bounding probability metrics.
\newblock \emph{International Statistical Review / Revue Internationale de Statistique}, 70\penalty0 (3):\penalty0 419--435, 2002.
\newblock URL \url{http://www.jstor.org/stable/1403865}.

\bibitem[Gromov(2007)]{gromov}
Mikhail Gromov.
\newblock \emph{Metric Structures for {R}iemannian and Non-{R}iemannian Spaces}.
\newblock Modern Birkh{\"a}user Classics. Birkh{\"a}user Boston, MA, 2007.
\newblock \doi{10.1007/978-0-8176-4583-0}.

\bibitem[Hartigan(1975)]{hartigan-75}
John~A. Hartigan.
\newblock \emph{Clustering algorithms}.
\newblock John Wiley \& Sons, New York-London-Sydney, 1975.
\newblock Wiley Series in Probability and Mathematical Statistics.

\bibitem[Hartigan(1981)]{hartigan-81}
John~A. Hartigan.
\newblock Consistency of single linkage for high-density clusters.
\newblock \emph{Journal of the American Statistical Association}, 76\penalty0 (374):\penalty0 388--394, 1981.
\newblock \doi{10.1080/01621459.1981.10477658}.

\bibitem[Jardine(2020{\natexlab{a}})]{jardine-persistent-homotopy}
John~F. Jardine.
\newblock Persistent homotopy theory, 2020{\natexlab{a}}.
\newblock arXiv:2002.10013.

\bibitem[Jardine(2020{\natexlab{b}})]{jardine-stable-components}
John~F. Jardine.
\newblock Stable components and layers.
\newblock \emph{Canadian Mathematical Bulletin}, 63\penalty0 (3):\penalty0 562–576, 2020{\natexlab{b}}.
\newblock \doi{10.4153/S000843951900064X}.

\bibitem[Kim et~al.(2016)Kim, Chen, Balakrishnan, Rinaldo, and Wasserman]{statistical-inference-cluster-trees}
Jisu Kim, Yen-Chi Chen, Sivaraman Balakrishnan, Alessandro Rinaldo, and Larry Wasserman.
\newblock Statistical inference for cluster trees.
\newblock In D.~D. Lee, M.~Sugiyama, U.~V. Luxburg, I.~Guyon, and R.~Garnett, editors, \emph{Advances in Neural Information Processing Systems 29}, pages 1839--1847. Curran Associates, Inc., 2016.

\bibitem[Kim and M{\'{e}}moli(2018)]{kim-memoli}
Woojin Kim and Facundo M{\'{e}}moli.
\newblock Formigrams: Clustering summaries of dynamic data.
\newblock In Stephane Durocher and Shahin Kamali, editors, \emph{Proceedings of the 30th Canadian Conference on Computational Geometry, {CCCG} 2018, August 8-10, 2018, University of Manitoba, Winnipeg, Manitoba, Canada}, pages 180--188, 2018.

\bibitem[Landi(2018)]{landi-interleavings}
Claudia Landi.
\newblock The rank invariant stability via interleavings.
\newblock In Erin~Wolf Chambers, Brittany~Terese Fasy, and Lori Ziegelmeier, editors, \emph{Research in Computational Topology}, pages 1--10, Cham, 2018. Springer International Publishing.
\newblock \doi{10.1007/978-3-319-89593-2_1}.

\bibitem[Lesnick and Wright(2015)]{lesnick-wright}
Michael Lesnick and Matthew Wright.
\newblock Interactive visualization of 2-{D} persistence modules.
\newblock \emph{arXiv:1512.00180}, 2015.

\bibitem[McInnes and Healy(2018)]{mcinnes-healy}
Leland McInnes and John Healy.
\newblock Accelerated hierarchical density based clustering.
\newblock In \emph{2017 IEEE International Conference on Data Mining Workshops (ICDMW)}, volume~00, pages 33--42, Nov. 2018.
\newblock \doi{10.1109/ICDMW.2017.12}.

\bibitem[McInnes et~al.(2017)McInnes, Healy, and Astels]{mcinnes-healy-astels-joss}
Leland McInnes, John Healy, and Steve Astels.
\newblock hdbscan: Hierarchical density based clustering.
\newblock \emph{Journal of Open Source Software}, 2\penalty0 (11):\penalty0 205, 2017.
\newblock \doi{10.21105/joss.00205}.

\bibitem[Meil\u{a}(2007)]{meila}
Marina Meil\u{a}.
\newblock Comparing clusterings---an information based distance.
\newblock \emph{J. Multivariate Anal.}, 98\penalty0 (5):\penalty0 873--895, 2007.
\newblock ISSN 0047-259X.
\newblock \doi{10.1016/j.jmva.2006.11.013}.

\bibitem[Miermont(2009)]{miermont}
Gr\'{e}gory Miermont.
\newblock Tessellations of random maps of arbitrary genus.
\newblock \emph{Ann. Sci. \'{E}c. Norm. Sup\'{e}r. (4)}, 42\penalty0 (5):\penalty0 725--781, 2009.
\newblock ISSN 0012-9593.
\newblock \doi{10.24033/asens.2108}.

\bibitem[Parthasarathy(1972)]{parthasarathy}
Kalyanapuram~Rangachari Parthasarathy.
\newblock \emph{Probability Measures on Metric Spaces}.
\newblock AMS Chelsea Publishing Series. Acad. Press, 1972.
\newblock ISBN 9780821869420.

\bibitem[Rinaldo and Wasserman(2010)]{rinaldo-wasserman}
Alessandro Rinaldo and Larry Wasserman.
\newblock Generalized density clustering.
\newblock \emph{Ann. Statist.}, 38\penalty0 (5):\penalty0 2678--2722, 10 2010.
\newblock \doi{10.1214/10-AOS797}.

\bibitem[Rinaldo et~al.(2012)Rinaldo, Singh, Nugent, and Wasserman]{stability-density-based-clustering}
Alessandro Rinaldo, Aarti Singh, Rebecca Nugent, and Larry Wasserman.
\newblock Stability of density-based clustering.
\newblock \emph{J. Mach. Learn. Res.}, 13\penalty0 (1):\penalty0 905–948, April 2012.

\bibitem[Scoccola(2020)]{scoccola}
Luis Scoccola.
\newblock \emph{Locally Persistent Categories And Metric Properties Of Interleaving Distances}.
\newblock PhD thesis, University of Western Ontario, 2020.

\bibitem[Scoccola and Rolle(2023)]{scoccola-rolle-joss}
Luis Scoccola and Alexander Rolle.
\newblock Persistable: persistent and stable clustering.
\newblock \emph{Journal of Open Source Software}, 8\penalty0 (83):\penalty0 5022, 2023.
\newblock \doi{10.21105/joss.05022}.

\bibitem[Shiebler(2021)]{shiebler}
Dan Shiebler.
\newblock Flattening multiparameter hierarchical clustering functors, 2021.
\newblock arXiv:2104.14734.

\bibitem[Sibson(1973)]{sibson}
Robin Sibson.
\newblock {SLINK}: An optimally efficient algorithm for the single-link cluster method.
\newblock \emph{Comput. J.}, 16:\penalty0 30--34, 1973.
\newblock \doi{https://doi.org/10.1093/comjnl/16.1.30}.

\bibitem[Silverman(1986)]{silverman86}
Bernard~W. Silverman.
\newblock \emph{Density Estimation for Statistics and Data Analysis}.
\newblock Chapman \& Hall, 1986.
\newblock \doi{https://doi.org/10.1201/9781315140919}.
\newblock Monographs on Statistics and Applied Probability.

\bibitem[Stuetzle and Nugent(2010)]{stuetzle-nugent}
Werner Stuetzle and Rebecca Nugent.
\newblock A generalized single linkage method for estimating the cluster tree of a density.
\newblock \emph{Journal of Computational and Graphical Statistics}, 19\penalty0 (2):\penalty0 397--418, 2010.
\newblock \doi{10.1198/jcgs.2009.07049}.

\bibitem[Tarjan(1983)]{tarjan}
Robert~Endre Tarjan.
\newblock \emph{Data Structures and Network Algorithms}.
\newblock Society for Industrial and Applied Mathematics, 1983.
\newblock \doi{10.1137/1.9781611970265}.

\bibitem[{The RIVET Developers}(2020)]{rivet}
{The RIVET Developers}.
\newblock {RIVET}.
\newblock 1.1.0, 2020.
\newblock URL \url{https://github.com/rivetTDA/rivet/}.

\bibitem[Villani(2009)]{villani}
C{\'e}dric Villani.
\newblock \emph{Optimal Transport: Old and New}.
\newblock Springer Berlin, Heidelberg, 2009.
\newblock \doi{10.1007/978-3-540-71050-9}.
\newblock Grundlehren der mathematischen Wissenschaften.

\bibitem[Vipond(2020)]{vipond-landscape}
Oliver Vipond.
\newblock Multiparameter persistence landscapes.
\newblock \emph{J. Mach. Learn. Res.}, 21\penalty0 (61):\penalty0 1--38, 2020.

\bibitem[Zomorodian and Carlsson(2005)]{carlsson-zomorodian}
Afra Zomorodian and Gunnar Carlsson.
\newblock Computing persistent homology.
\newblock \emph{Discrete \& Computational Geometry}, 33\penalty0 (2):\penalty0 249 -- 274, 2005.
\newblock \doi{10.1007/s00454-004-1146-y}.

\end{thebibliography}


\end{document}